\numberwithin{equation}{section}\newtheorem{theorem}{Theorem}[section]
\newtheorem{lemma}[theorem]{Lemma}
\newtheorem{proposition}[theorem]{Proposition}\theoremstyle{remark}
\newtheorem{remark}{Remark}[section]
\theoremstyle{definition}
\newtheorem{definition}[theorem]{Definition}
\newtheorem{properties}{Property}[section]
\newcommand{\dist}{\mathrm{dist}}
\newcommand{\supp}{\mathrm{supp\,}}
\newcommand{\R}{\mathbb{R}}
\newcommand{\Rn}{\mathbb{R}^{n}}
\title[Average decay of the Fourier transform of measures]
{Average decay of the Fourier transform of measures with applications}
\date{\today}    
\author{Renato Luc\`a}
\author{Keith M. Rogers}
\address{Instituto de Ciencias Matem\'aticas CSIC-UAM-UC3M-UCM, Madrid, 28049, Spain.}
\email{renato.luca@icmat.es, keith.rogers@icmat.es}
\thanks{
Mathematics Subject Classification. Primary 42B37; Secondary  28A75}
\keywords{Hausdorff measure and dimension, Fourier transform}
\begin{document}
\begin{abstract} We consider spherical averages of the Fourier transform of fractal measures and improve both the upper and lower bounds on the rate of decay. Maximal estimates 
with respect to fractal measures are deduced
for the Schr\"odinger and wave equations. This refines the almost everywhere convergence of the solution to its initial datum as time tends to zero. A consequence is that the solution to the wave equation cannot diverge on a $(d-1)$-dimensional manifold if the data belongs to the energy space $\dot{H}^1(\mathbb{R}^d)\times L^2(\mathbb{R}^d)$.  
\end{abstract}

\maketitle

\section{Introduction}

Consider the  Schr\"odinger equation, $i \partial_{t} u +\Delta u=0$, on $\R^d=\R^{n+1}$, with initial data 
$u(\cdot, 0) = u_{0}$ in $H^{s}$ defined by
$$
H^{s} = \left\{ 
G_{s} * f \ : \ f \in L^{2}(\Rn)
\right\}.
$$
Here $G_{s}$ is the Bessel kernel defined as usual by $\widehat{G}_{s} = (1 + |\cdot|^{2})^{- s/2}$, where $\,\widehat{\ }\,$ is the Fourier transform.
In \cite{Carl}, Carleson 
considered the problem of identifying 
the exponents $s > 0$ for which
\begin{equation}\label{CarlesonProblem}
\lim_{t\to 0} u(x , t) = u_{0}(x), \qquad \text{a.e.} \quad x \in \Rn, \qquad \forall \  u_0\in H^s,
\end{equation}
and proved that this is true as long as
 $s \geq 1/4$ in the one-dimensional case. Dahlberg and Kenig \cite{DahlKenig} then showed that \eqref{CarlesonProblem} does not hold if $s<1/4$. 
The higher dimensional case has since been studied by many authors; see for example
\cite{Cow, Carb, Sjolin,Vega,Bou3,MVV2,TV,T}. The best known positive result to date, that (\ref{CarlesonProblem}) holds if 
$s > 1/2 - 1/(4n)$, is due to Lee \cite{Lee} when $n=2$ and Bourgain \cite{Bou2} when $n\ge 3$. Bourgain also showed that $s\ge 1/2-1/n$ is necessary for \eqref{CarlesonProblem} to hold.
Together we see that \eqref{CarlesonProblem} holds uniformly with respect to $n$ if and only if $s\ge 1/2$.

A natural refinement of the problem 
is to bound the size of the divergence sets 
\begin{equation*}
\mathcal{D}(u_{0}) := \Big\{\, 
x \in \Rn\ : \lim_{t\to 0} u(x, t) \neq u_{0}(x)
\,\Big\},
\end{equation*}
 and in particular we consider 
\begin{equation*}
\alpha_{n}(s) := 
\sup_{u_{0}\in H^{s}} \dim_{H}\big( \mathcal{D}(u_{0}) \big),
\end{equation*}
where $\dim_{H}$ denotes the Hausdorff dimension. A completely satisfactory theory has already been developed in the one-dimensional case; see \cite{BBCR}, \cite{BR}, or \cite{CL}. Indeed
$$
\alpha_{n}(s) \leq\begin{array}{lcl}
n-2s,  & \mbox{if} & 
 \frac{n}{4}  \le s \le \frac{n}{2},  \\
\end{array}
$$
and this bound is sharp in the sense that initial data in $H^s$ can be singular on $\alpha$-dimensional sets when $\alpha<n-2s$; see \cite{Z}. On the other hand, the solution is continuous (and so $\alpha_{n}(s)=0$) when $s>n/2$, and the example of Dahlberg and Kenig tells us that $\alpha_{n}(s)=n$ when $s<1/4$.  Noting that altogether, when $n=1$, we have covered the whole range, we see that  $\alpha_1$ is known and it is discontinuous at $s=1/4$.   
These results and a more gentle introduction to the problem can also be found in  \cite[Chapter 17]{M}. 

Here we improve the best known upper 
bounds for $\alpha_{n}(s)$ in the remaining range of interest, when $s< n/4$, in higher dimensions.  In particular, we prove the following theorem that refines the almost everywhere convergence due to Bourgain and Lee. At the same time, we improve the bounds $\alpha_n(s)\le n+1-2s$ due to Sj\"ogren and Sj\"olin~\cite{Sjolin2} and  $\alpha_{n}(s)\le \frac{n+3}{n + 1}\big(n-2s)$ due to Barcel\'o, Bennett, Carbery and the second author \cite{BBCR}.

\begin{theorem}\label{bil} Let $n\ge 2$. Then
$$
\alpha_{n}(s) \leq \left\{
\begin{array}{lcl}
n + 1 - \big( 2+\frac{2}{2n-1} \big) s,  &  & 
\frac{1}{2} - \frac{1}{4n} < s \le1 - \frac{3}{2(n+1)},  \\
 &  & \\
n +1-\frac{1}{n+1}- 2s,  &  &  
1 - \frac{3}{2(n+1)} \leq s < \frac{n}{4}.
\end{array} \right.
$$
\end{theorem}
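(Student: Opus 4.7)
\emph{Proof plan.} The approach is to reduce the dimension bound to a weighted $L^2$ maximal estimate for the Schr\"odinger propagator and then feed in the improved estimates for the average Fourier decay of fractal measures developed earlier in the paper.

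By Frostman's lemma, if $\dim_H \mathcal{D}(u_0) > \alpha$ for some $u_0 \in H^s$, then $\mathcal{D}(u_0)$ supports a probability measure $\mu$ with $\mu(B(x,r))\leq C r^\alpha$. A standard density argument shows that it suffices to prove
\[
\bigl\|\sup_{0<t<1}|e^{it\Delta} v|\bigr\|_{L^2(d\mu)} \leq C\|v\|_{H^s}
\]
for all Schwartz $v$ and every such $\mu$, whenever $\alpha$ lies below the right-hand side claimed in the theorem. The dimension conclusion follows from quantifying the blow-up of $C$ with Frostman constants.

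First, I would linearize the supremum by a measurable $t(x)\in(0,1)$ and frequency-localize to $|\xi|\sim R$, reducing to a bound with an overall factor $R^{2s}$ that can be summed geometrically in $R$. A $TT^{\ast}$ argument then leads to the task of controlling
\[
\int\!\!\int K_R\bigl(x-y,\,t(x)-t(y)\bigr)\, f(x)\overline{f(y)}\,d\mu(x)\,d\mu(y),
\]
where $K_R$ is essentially $R^n$ times the Fourier transform of the surface measure on the parabolic cap $\{(\xi,|\xi|^2):|\xi|\sim R\}$. After dyadically partitioning in $|x-y|$ and $|t(x)-t(y)|$, this bilinear form is controlled by averages of $|\widehat{\mu}|^{2}$ over pieces of the paraboloid at frequency $R$, for which the improved spherical-average decay proved earlier in the paper yields an exponent strictly better than what one gets trivially from the Frostman condition alone.

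For the larger range $s\ge 1-\tfrac{3}{2(n+1)}$, the linear version of this scheme, fed with the improved average Fourier decay, should produce the bound $\alpha\leq n+1-\tfrac{1}{n+1}-2s$ after optimizing the dyadic decomposition in $|x-y|$. For the smaller range $\tfrac{1}{2}-\tfrac{1}{4n}<s<1-\tfrac{3}{2(n+1)}$, the purely linear kernel analysis is insufficient, and I would instead run a bilinear decomposition in the spirit of Tao and Bourgain--Guth: split pairs of frequencies $(\xi_{1},\xi_{2})$ into those that are angularly separated from those that are close, apply the bilinear adjoint restriction estimate on the separated piece (this produces the $1/(2n-1)$ gain matching the Bourgain--Lee exponent), and reinvest the fractal decay on the near-diagonal part via parabolic rescaling.

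\textbf{Main obstacle.} The delicate step is transferring bilinear restriction from the usual Lebesgue setting to $L^{2}(d\mu)$ for a Frostman measure while still extracting the gain coming from $\alpha<n$. One has to track carefully how the improved spherical-average decay interacts with the Whitney decomposition underlying the bilinear-to-linear passage, and check that the two ranges match continuously at $s=1-\tfrac{3}{2(n+1)}$. Once this bookkeeping is handled, the stated exponents arise by optimizing the interpolation between the weighted bilinear extension estimate and the linear fractal bound.
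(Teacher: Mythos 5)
Your opening reduction (Frostman's lemma plus a uniform maximal estimate in $L^2(d\mu)$ over $\alpha$-dimensional measures) matches the paper's Lemma \ref{BridgeLemma}, but the substance of your plan diverges in ways that are not merely cosmetic. The route of feeding the average Fourier decay $\beta_d(\alpha)$ into a linearized $TT^{*}$ kernel analysis is essentially the route via Proposition \ref{propo}, which the paper explicitly rejects for the Schr\"odinger equation: Theorem \ref{us} concerns averages over the sphere (hence, after polar coordinates, the cone), whereas the paraboloid has an extra nonvanishing principal curvature, so passing through sphere-average decay loses information and does not produce the exponents of Theorem \ref{bil}. You would at minimum need a paraboloid analogue of the average-decay estimate, and even then this detour is not efficient; the actual proof applies the Bourgain--Guth machinery directly to the maximal operator $\sup_{0<t<1}|e^{it\Delta}f|$ rather than to $\widehat{\mu}$.

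Second, a separated/near-diagonal split resolved by a \emph{bilinear} adjoint restriction estimate cannot reach the threshold $s>\tfrac12-\tfrac1{4n}$ for $n\ge 3$: that exponent is the signature of the full $(n+1)$-linear Bennett--Carbery--Tao estimate run through the Bourgain--Guth decomposition, which is what the paper uses (Theorem \ref{OurBouThm} is proved from the decomposition \eqref{FinalSelfIteratedFormula} with $\lambda=1/2$, the multilinear term handled via Proposition \ref{MultilinEstimates2} after parabolic rescaling, and the linear and remainder terms by separate weighted estimates). Finally, the step you label the main obstacle --- transferring the extension estimates to $L^2(d\mu)$ while extracting the gain from $\alpha<n$ --- is precisely where all the work lies: one must control the weights $\Psi_\tau$ produced by the decomposition against the fractal measure (Proposition \ref{ThePsi}, together with covering arguments by translates of $\tau'$ and the bound $\mu_R(I_j)\lesssim c_\alpha(\mu)\delta^{-(\alpha+1)}$), and none of this is supplied by your outline. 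As written, the proposal identifies the correct reduction but neither the correct main tool (multilinear rather than bilinear, applied directly rather than through spherical averages) nor the key technical estimates, so it does not constitute a proof.
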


This will be a consequence of a maximal estimate (see Theorem~\ref{OurBouThm}) that holds uniformly with respect to fractal measures in the following class. To avoid repetition, we include positivity and a support condition inside the definition of \lq $\alpha$-dimensional'.

\begin{definition}
Let $0 < \alpha \leq d$.  We say that $\mu$ is (at least) $\alpha$-dimensional if it is a positive Borel measure, supported in the unit ball $B(0,1)$, that satisfies
$$
c_{\alpha}(\mu) := \sup_{\substack{x \in \R^d \\ r > 0}}
\frac{\mu(B(x,r))}{r^{\alpha}} < \infty.
$$
\end{definition}

The Fourier transform of such a measure need not decay in every direction (for example the Fourier transform of a piece of the surface measure on a hyperplane does not decay in the normal direction), however it must decay on average. As the class contains measures that are supported on $\alpha$-dimensional sets, the uncertainty principle suggests that there should be less decay for smaller values of~$\alpha$.  Let $\beta_d(\alpha)$
denote the supremum of the numbers $\beta$ for which\footnote{We write $A \lesssim B$ if $ A\leq C B$ for some constant $C > 0$ that only depends on the dimension $d$ and/or a small parameter $\varepsilon$, in this case $\varepsilon=\beta_d(\alpha)-\beta$. If the constant depends on anything else, say a power of~$N$, we  write $A \lesssim_N B$. We also write $A \simeq B$ if  $ A \lesssim B$ and $B \lesssim A$.}
\begin{equation}\label{dz}
\|\widehat{\mu}(R\,\cdot\,)\|^2_{L^2(\mathbb{S}^{d-1})}\lesssim
c_{\alpha}(\mu)\|\mu\|R^{-\beta}\end{equation} whenever 
$R> 1$ and $\mu$ is $\alpha$-dimensional. The problem of identifying the precise value of $\beta_d(\alpha)$ was proposed by Mattila; see for example \cite[pp. 42]{M4} or \cite[Chapter 15]{M}. 
In two dimensions,  the sharp decay rates are now known;
$$
\beta_2(\alpha)=\left\lbrace
\begin{array}{lll}
\alpha,& \alpha\in(0,1/2],& \\
&& \text{(Mattila~\cite{M0})}\\
1/2,& \alpha\in[1/2,1],& \\
&&\\
\alpha/2,& \alpha\in[1,2],& \text{(Wolff~\cite{W})}.
\end{array}
\right.
$$
The work of Wolff, later simplified by Erdo\u{g}an \cite{E1},  improved upon a lower bound due to Bourgain \cite{Bou1} who was the first to bring Fourier restriction theory to bear on the problem. In higher dimensions, the best known lower bounds are
$$
\beta_d(\alpha)\ge\left\lbrace
\begin{array}{lll}
\alpha,& \alpha\in(0,\frac{d-1}{2}],& \\
&& \text{(Mattila~\cite{M0})}\\
\frac{d-1}{2},& \alpha\in[\frac{d-1}{2},\frac{d}{2}],& \\
&&\\
\alpha-1 +\frac{d+2-2\alpha}{4},& \alpha\in[\frac{d}{2},\frac{d+2}{2}],&  \text{(Erdo\u{g}an~\cite{E3,E2})}\\
&&\\
\alpha-1, &\alpha\in[\frac{d+2}{2},d],&\text{(Sj\"olin~\cite{S})}.
\end{array}
\right.
$$

On the other hand, by considering limits of very simple measures supported on small sets; see for example \cite[Chapter 15.2]{M}, 
it is easy to show that 
$$
\beta_d(\alpha)\le\left\lbrace
\begin{array}{ll}
\alpha,& \alpha\in(0, d-2],\\
&\\
\alpha-1+\frac{d-\alpha}{2},& \alpha\in[d-2,d].
\end{array}
\right.
$$
The second bound, for larger $\alpha$, is given by what is  known as  the \lq Knapp example'.
We see that the difference between the best known upper and lower bounds is never more than one and the bounds coincide when $\alpha<\frac{d-1}{2}$ or $\alpha=d$.
Worse counterexamples have been constructed for signed measures by Iosevich and Rudnev~\cite{IR}, or when  
 the averages are taken over a piece of
paraboloid rather than the sphere by Barcel\'o, Bennett, Carbery, Ruiz and Vilela \cite{BBCRV}. Indeed, there is an extensive literature regarding  averages over different manifolds and other generalisations; see for example \cite{BGGIST, BHI, HL,  HI, SS0} and the references therein. 

We will first prove the following upper bound.

\begin{theorem}\label{new} Let $d\ge 4$. Then
$$ \beta_{d}(\alpha) 
\leq \alpha - 1 +\frac{2(d-\alpha)}{d}.
$$
\end{theorem}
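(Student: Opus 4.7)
The inequality $\beta_d(\alpha) \le \alpha - 1 + 2(d-\alpha)/d$ asserts, by the definition of $\beta_d(\alpha)$ as a supremum, the existence for every $\beta > \alpha - 1 + 2(d-\alpha)/d$ of an $\alpha$-dimensional measure $\mu$ and a scale $R$ such that the estimate \eqref{dz} fails. Since the classical Knapp example already gives $\beta_d(\alpha) \le \alpha - 1 + (d-\alpha)/2$, and $2/d < 1/2$ precisely when $d \ge 5$, the target is a genuine improvement in dimensions $d \ge 5$ (and coincides with the Knapp bound when $d = 4$). A useful heuristic is that $\alpha - 1 + 2(d-\alpha)/d$ is the linear interpolation in $\alpha$, with slope $(d-2)/d$, between the Mattila value $d/2$ (realised at $\alpha = d/2$) and the Sj\"olin value $d-1$ (realised at $\alpha = d$); this suggests the extremal measure should interpolate between the extremals of these two endpoint regimes.

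Concretely, I would take $\mu$ as an $\epsilon$-thickening of a $(d/2)$-dimensional measure $\nu$, namely $\mu = \nu * \mathbf{1}_{B(0,\epsilon)}$, where $\nu$ is either Lebesgue measure on a $(d/2)$-dimensional coordinate subspace (intersected with $B(0,1)$) or a Salem $(d/2)$-dimensional measure obtained from Kahane's construction, and $\epsilon = \epsilon(R)$ is a thickness parameter to be optimised. The proof would proceed in three steps: (i) verify that $\mu$ is $\alpha$-dimensional and show that $c_\alpha(\mu) \sim \epsilon^{d-\alpha}$ in the regime $\alpha \in (d/2, d]$; (ii) compute $\|\widehat{\mu}(R\,\cdot\,)\|^2_{L^2(\Sn)}$ via the factorisation $\widehat{\mu}(\xi) = \widehat{\nu}(\xi)\,\widehat{\mathbf{1}_{B(0,\epsilon)}}(\xi)$, decomposing $\Sn$ according to whether each factor is in its oscillatory or non-oscillatory regime (controlled by whether $R|\omega'|$ and $\epsilon R |\omega''|$ exceed $1$, where $\omega = (\omega',\omega'')$ is split relative to the support of $\nu$); and (iii) optimise $\epsilon = R^{-s}$ for some $s \in (0,1)$ to match the target exponent.

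The main obstacle is the delicate spherical integration in (ii). The singular behaviour near the ``polar region'' of $\Sn$ (where $\omega$ is close to the subspace supporting $\nu$, and $\widehat{\nu}(R\omega)$ lies in its non-oscillatory regime) and near the ``equatorial region'' (where $\widehat{\mathbf{1}_{B(0,\epsilon)}}$ is non-oscillatory) must be carefully balanced, and it is precisely this interplay that should produce the ratio $2(d-\alpha)/d$ in the exponent: the denominator $d$ reflects the dimension of the ambient sphere, while the numerator $2(d-\alpha)$ captures the codimension of the thickening. An alternative, more combinatorial route is to model $\mu$ as a superposition of $N$ translated Knapp caps placed on a $(d/2)$-dimensional lattice, and to analyse the associated exponential sum on $\Sn$ via Poisson summation; this avoids Salem-set constructions but replaces the oscillatory analysis with non-trivial arithmetic bookkeeping.
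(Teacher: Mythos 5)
Your construction does not reach the stated bound, and the key mechanism of the proof is missing. The measure $\mu=\nu\ast\mathbf{1}_{B(0,\epsilon)}$ with $\nu$ supported on a $(d/2)$-plane is (a member of) the classical family that produces the bound $\beta_d(\alpha)\le\alpha$: writing $\omega=(\omega',\omega'')$ relative to the plane, the dominant contribution to $\|\widehat{\mu}(R\,\cdot\,)\|_{L^2(\mathbb{S}^{d-1})}^2$ comes from the polar region $|\omega'|\lesssim R^{-1}$, and optimising $\epsilon=R^{-s}$ over $s\in(0,1]$ one finds that the ratio $\|\widehat{\mu}(R\,\cdot\,)\|_{L^2(\mathbb{S}^{d-1})}^2/(c_\alpha(\mu)\|\mu\|)$ is never larger than $\simeq R^{-\alpha}$ (attained at $s=1$); the equatorial region is even weaker. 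Since $\alpha-1+2(d-\alpha)/d<\alpha$ exactly when $\alpha>d/2$, which is the only regime where the theorem improves on known bounds, this family provably cannot produce the claimed counterexample. The Salem variant is worse still: optimal pointwise Fourier decay makes the spherical averages as small as possible, which is the opposite of what a counterexample requires. Your heuristic attributing the exponent $2(d-\alpha)/d$ to ``the dimension of the ambient sphere'' and ``the codimension of the thickening'' does not correspond to any computation that your construction supports, and nothing in your proposal explains the hypothesis $d\ge4$.

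The actual proof is arithmetic. One takes $\mu=\chi_\Lambda\,dx$ with $\Lambda=(R^{\kappa-1}\mathbb{Z}^d+B(0,\varepsilon R^{-1}))\cap B(0,1)$ a \emph{full-rank} lattice of tiny balls, dualises \eqref{dz} to an extension estimate, and tests it on the normalised characteristic function of the $\rho R^{-1}$-neighbourhood $\Omega$ of $\Gamma=\{\omega\in\mathbb{S}^{d-1}:R^\kappa\omega\in2\pi\mathbb{Z}^d\}$. The lattice--dual-lattice pairing forces the phase $\omega\cdot Rx$ to lie within $1/10$ of $2\pi\mathbb{Z}$ for all $\omega\in\Omega$, $x\in\Lambda$, so there is total constructive interference and $|(f\,d\sigma)^\vee(Rx)|\gtrsim\sqrt{\sigma(\Omega)}$ on all of $\Lambda$. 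The decisive input is the lattice-point count $\#\Gamma\gtrsim R^{\kappa(d-2)}$, valid for $d\ge4$ along radii $R$ with $(R^\kappa/2\pi)^2\in\mathbb{N}$ (the number of representations of $N^2$ as a sum of $d$ squares is $\gtrsim N^{d-2}$ only when $d\ge4$); this is both the source of the restriction $d\ge4$ and, after choosing $\kappa=(d-\alpha)/d$ to balance $c_\alpha(\mu)\simeq R^{\alpha-d}$ against $R^{-d\kappa}$, the origin of the exponent $d-1-\kappa(d-2)=\alpha-1+2(d-\alpha)/d$. Your closing remark about Knapp caps on a lattice with Poisson summation gestures in this direction, but places the lattice in a $(d/2)$-plane rather than all of $\mathbb{R}^d$ and omits the lattice-point count entirely, so the essential idea is not present.
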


 This improves  the previous upper bound  when $\alpha>d/2$ and $d\ge 5$ (it  coincides  precisely with the bound given by the  Knapp example  when $d=4$). For this we will take advantage of a well-known number theoretic result which counts the number of ways the square of a large integer can be represented as a sum of squares.

The bulk of the article will then be dedicated to proving the following lower bound.
\begin{theorem}\label{us}
Let $d \geq 3$. Then
\begin{equation*}\label{MainResultErd}
\beta_{d}(\alpha) 
\geq \alpha - 1
+  \frac{(d-\alpha)^2}{(d-1)(2d-\alpha-1)}.
\end{equation*}
\end{theorem}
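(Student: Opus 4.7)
The plan is to follow the bilinear Fourier extension strategy of Wolff and Erdo\u{g}an, pushing it into the regime $\alpha$ close to $d$ where previously only Sj\"olin's trivial bound $\beta_d(\alpha)\ge\alpha-1$ was known.

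First I would recast the problem as a weighted $L^2$ extension estimate. By Parseval,
$$
\|\widehat{\mu}(R\,\cdot\,)\|_{L^2(\Sn)}^2 = \iint \widehat{d\sigma}\bigl(R(x-y)\bigr)\,d\mu(x)\,d\mu(y),
$$
and by duality and rescaling, (\ref{dz}) is equivalent to
$$
\int |Ef(x)|^2\,d\mu_R(x) \lesssim R^{\alpha-1-\beta}\,c_\alpha(\mu)\,\|f\|_{L^2(\Sn)}^2,
$$
where $Ef(x):=\int_{\Sn} e^{2\pi i x\cdot\omega}f(\omega)\,d\sigma(\omega)$ and $\mu_R$ is the $R$-dilate of $\mu$, supported in a ball of radius $R$ and still $\alpha$-dimensional (in the sense of the $c_\alpha$ condition) up to scale $R$.

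Next I would perform a Whitney decomposition of $\Sn\times\Sn$ into cap pairs $(\tau,\tau')$ of common radius $\delta$ at angular separation $\sim\delta$, with $\delta$ ranging dyadically over $[R^{-1/2},1]$. For transverse pairs I would invoke Tao's bilinear extension theorem
$$
\|Ef_\tau\cdot Ef_{\tau'}\|_{L^q(\R^d)} \lesssim \delta^{\eta(q,d)}\|f_\tau\|_2\|f_{\tau'}\|_2,\qquad q>\tfrac{d+2}{d},
$$
and combine it with a Frostman-type $L^{q'}$ mass bound for $\mu_R\otimes\mu_R$ on the corresponding Whitney region; the diagonal pairs are handled by Stein-Tomas together with Plancherel. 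Summing across Whitney scales $\delta$ and optimizing in $q$ should yield the exponent $\alpha-1+\frac{(d-\alpha)^2}{(d-1)(2d-\alpha-1)}$, the specific rational form arising from balancing the bilinear gain $\delta^{\eta(q,d)}$ against the Frostman loss $\delta^{\alpha}$ and the sphere volume factor $\delta^{d-1}$, with the $2d-\alpha-1=(d-1)+(d-\alpha)$ in the denominator reflecting the combined contributions of the codimension of $\Sn$ and the fractal deficit.

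The main obstacle lies in the regime $\alpha$ close to $d$, where the Knapp example almost saturates the bound and the bilinear gain is delicate. Although $\mu_R$ can concentrate near a single Knapp-like cap, this concentration still forces a strictly smaller effective dimension of $\mu_R\otimes\mu_R$ in the transverse directions, and it is precisely this phenomenon that the Whitney sum is designed to exploit. A naive application of Tao's theorem at a single scale only recovers Erdo\u{g}an's bound in its original range $\alpha\in[d/2,(d+2)/2]$; the improvement in the Sj\"olin range $\alpha\in[(d+2)/2,d]$ relies on running the optimization across all Whitney scales simultaneously and on choosing $q$ as a function of $\alpha$ rather than fixing it at Tao's endpoint. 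The principal technical challenge will be to verify that this multi-scale balance produces exactly the rational expression $\frac{(d-\alpha)^2}{(d-1)(2d-\alpha-1)}$ and not a weaker variant.
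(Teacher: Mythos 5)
Your strategy is built on the wrong key tool, and the gap is not one of bookkeeping but of principle. The Whitney-plus-Tao-bilinear scheme you describe is exactly what Erdo\u{g}an already carried out (in \cite{E3,E2}, with the diagonal handled by Stein--Tomas and the sum taken over all dyadic separations $\delta\in[R^{-1/2},1]$), and it is known to terminate at $\beta_d(\alpha)\ge \alpha-1+\frac{d+2-2\alpha}{4}$, which degenerates to Sj\"olin's bound at $\alpha=\frac{d+2}{2}$ and gives nothing beyond it for larger $\alpha$. The limitation is intrinsic to the bilinear input: the exponent $q>\frac{d+2}{d}$ in Tao's theorem fixes the trade-off between the transversal gain and the Frostman loss, and no re-optimisation of $q$ or of the scale sum changes the outcome, because the extremising configurations (Knapp-type concentrations of $\mu$) saturate the bilinear estimate at a single scale. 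Your heuristic reading of $2d-\alpha-1=(d-1)+(d-\alpha)$ as ``codimension plus fractal deficit'' is numerology; in the actual proof that denominator arises from optimising a cutoff parameter, not from summing Whitney scales.

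What the paper actually uses is the $d$-linear extension estimate of Bennett, Carbery and Tao (Theorem~\ref{MultilinEstimates}), whose exponent $\frac{2d}{d-1}$ for the geometric mean is genuinely stronger than anything the bilinear theory provides, deployed through the Bourgain--Guth decomposition \eqref{FinalSelfIteratedFormula}. That decomposition splits $T_Sg$ into a fully transversal $d$-linear piece (estimated by Lemma~\ref{Fuman}, where the factor $\delta^{\frac{d-\alpha}{2d(d-1)}}$ carrying the $(d-1)$ originates), a linear piece over small caps at scale $\delta\sim R^{-\lambda}$ (estimated in \eqref{polka} by $R^{1/2}\delta^{\frac{d-\alpha}{2(d-1)}}$), and lower-dimensional transversal contributions controlled by the majorant functions $\Psi_\tau$, whose $L^q$-averaged bounds (Proposition~\ref{ThePsi}) require a nontrivial induction on scales. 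The exponent $\alpha-1+\frac{(d-\alpha)^2}{(d-1)(2d-\alpha-1)}$ is obtained by choosing the cutoff $\lambda=\frac{d-\alpha}{2d-\alpha-1}$ to balance the multilinear and linear terms; none of this structure is reachable from your bilinear setup. To repair the proposal you would have to replace Tao's theorem by the $d$-linear estimate and reconstruct the Bourgain--Guth machinery, at which point you are reproducing the paper's argument rather than giving an alternative.
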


This improves the estimate of Sj\"olin for all $\alpha<d$ and the estimate of Erdo\u{g}an for\footnote{in fact in a very slightly larger range. 
} $\alpha\ge d/2+2/3+1/d$.  
This is not enough to improve the state-of-the-art for the Falconer distance set conjecture (the argument of Mattila \cite{M0} combined with Theorem~\ref{us} implies that distance sets associated to $\alpha$-dimensional sets have positive Lebesgue measure whenever $\alpha> d/2+5/12$). On the other hand, the difference between the best known upper and lower bounds is now strictly less than  $5/6$, from which we can deduce new information regarding the  pointwise convergence of solutions to the wave equation. 

Considering $\partial_{tt} v=\Delta v$ on $\R^{d+1}$, with $v(\cdot,0)=v_0$ and $\partial_tv(\cdot,0)=v_1$, we take the initial data in the homogeneous space $\dot{H}^s\times \dot{H}^{s-1}$, where $$
\dot{H}^{s} := \left\{ 
I_{s} * f \ : \ f \in L^{2}(\R^d)
\right\}.
$$
Here $I_{s}$ is the Riesz kernel defined by $\widehat{I}_{s} := |\cdot|^{- s}$. The almost everywhere convergence question was first considered by Cowling \cite{Cow}, who proved 
\begin{equation*}\label{CowlingProblem}
\lim_{t\to0} v(x , t) = v_{0}(x), \qquad \text{a.e.} \quad x \in \R^d, \qquad \forall \  (v_0,v_1)\in \dot{H}^s\times \dot{H}^{s-1}
\end{equation*}
as long as $s>1/2$. Walther \cite{Wa} then proved that this is not true when $s\le 1/2$, and so the Lebesgue measure question is completely solved for the wave equation. As before we write
\begin{equation*}
\mathcal{D}(v_{0},v_1) := \Big\{\, 
x \in \R^d\ : \lim_{t\to0} v(x, t) \neq v_{0}(x)
\,\Big\},
\end{equation*}
and consider the refined problem of providing upper bounds for 
\begin{equation*}
\gamma_{d}(s) := 
\sup_{(v_{0},v_1)\in \dot{H}^{s}\times \dot{H}^{s-1}} \dim_{H}\big(\mathcal{D}(v_{0},v_1) \big).
\end{equation*}
Sharp estimates were proven in the two-dimensional case in \cite{BBCR}, using the following proposition which forms the link with the decay estimate \eqref{dz}.
\begin{proposition}\label{propo} 
Let $d\ge 2$ and $0<s<d/2$. Then $\beta_d(\alpha)> d-2s\ \Rightarrow \  \gamma_{d}(s)\le \alpha.$
\end{proposition}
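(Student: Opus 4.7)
The plan is a contradiction argument. Suppose $\dim_H(\mathcal{D}(v_0,v_1))>\alpha$ for some $(v_0,v_1)\in\dot{H}^s\times\dot{H}^{s-1}$. Frostman's lemma then yields an $\alpha$-dimensional probability measure $\mu$ supported on $\mathcal{D}(v_0,v_1)$, i.e. $c_\alpha(\mu)<\infty$. I would aim for the maximal inequality
\begin{equation*}
\Big\|\sup_{0<t<1}|v(\cdot,t)-v_0|\Big\|_{L^2(d\mu)}\;\lesssim\;\|v_0\|_{\dot{H}^s}+\|v_1\|_{\dot{H}^{s-1}},
\end{equation*}
since a standard density argument (approximating by Schwartz data, for which pointwise convergence is trivial) combined with Chebyshev then forces $\mu(\mathcal{D}(v_0,v_1))=0$, contradicting $\supp\mu\subset\mathcal{D}(v_0,v_1)$.

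First I would decompose the data via Littlewood--Paley into frequency-localised pieces $(P_k v_0, P_k v_1)$ on $\{|\xi|\sim R\}$ with $R=2^k$, giving $v=\sum_k v_k$. The low-frequency block $k\le 0$ is treated separately and is harmless owing to $s<d/2$ and Sobolev embedding. For each $k\ge 0$, the identity $|h(t)|^2-|h(0)|^2=2\int_0^t\mathrm{Re}(\overline{h(s)}h'(s))\,ds$ applied to $h(\cdot,t)=v_k(\cdot,t)-P_k v_0$ (so that $h(\cdot,0)=0$), combined with Fubini and Cauchy--Schwarz, produces
\begin{equation*}
\Big\|\sup_{0<t<1}|v_k-P_k v_0|\Big\|_{L^2(d\mu)}^2\;\lesssim\;\|v_k\|_{L^\infty_t L^2(d\mu)}\cdot\|\partial_t v_k\|_{L^\infty_t L^2(d\mu)}.
\end{equation*}

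The key ingredient is a restriction-type estimate of the form
\begin{equation*}
\|g\|_{L^2(d\mu)}^2\;\lesssim\; c_\alpha(\mu)\|\mu\|\, R^{d-1-\beta}\|g\|_{L^2(\R^d)}^2
\end{equation*}
for any $g$ with $\widehat{g}$ supported in $\{|\xi|\sim R\}$ and any $\beta<\beta_d(\alpha)$. This extracts from the spherical-average decay \eqref{dz} an operator bound via a $TT^*$ reduction whose kernel is the Fourier transform of surface measure on the sphere. Inserting this into the previous display together with the energy bounds $\|v_k(\cdot,t)\|_{L^2}\lesssim R^{-s}a_k$ and $\|\partial_t v_k(\cdot,t)\|_{L^2}\lesssim R^{1-s}a_k$, where $a_k^2:=\|P_k v_0\|_{\dot{H}^s}^2+\|P_k v_1\|_{\dot{H}^{s-1}}^2$ and $\sum_k a_k^2\lesssim\|(v_0,v_1)\|_{\dot{H}^s\times\dot{H}^{s-1}}^2$, yields the single-scale bound
\begin{equation*}
\Big\|\sup_t|v_k-P_k v_0|\Big\|_{L^2(d\mu)}\;\lesssim\; R^{(d-\beta-2s)/2}\,a_k.
\end{equation*}
Choose $\beta$ with $d-2s<\beta<\beta_d(\alpha)$, which is possible by hypothesis; the prefactor $2^{k(d-\beta-2s)/2}$ is then geometrically decreasing in $k\ge 0$, and a Cauchy--Schwarz summation against $(a_k)\in\ell^2$ closes the maximal estimate.

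The principal obstacle I anticipate is the justification of the restriction inequality with the sharp exponent matching $\beta_d(\alpha)$: a crude Schur-test argument only recovers Mattila's baseline $\min(\alpha,(d-1)/2)$, so one has to upgrade the spherical-average decay—which in isolation only controls the quadratic form $\langle U_r^*U_r\,1,1\rangle$ evaluated at the constant function—into a genuine $L^2(d\mu)\!\to\!L^2(d\mu)$ operator bound, leveraging the positivity of $\mu$ and the stability of the fractal condition under multiplication by bounded densities.
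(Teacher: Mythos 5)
Your overall architecture (Frostman's lemma plus a density argument, dyadic frequency decomposition, the energy identity $\sup_t|h|^2\le 2\int_0^1|h|\,|\partial_t h|\,dt$) is coherent, but it hinges entirely on the ``key ingredient''
\begin{equation*}
\|g\|_{L^2(d\mu)}^2\;\lesssim\;c_\alpha(\mu)\|\mu\|\,R^{d-1-\beta}\|g\|_{L^2(\R^d)}^2,\qquad \supp\widehat g\subset\{|\xi|\sim R\},
\end{equation*}
and this is exactly the step that does not follow from the hypothesis. The definition \eqref{dz} of $\beta_d(\alpha)$, applied to the positive measures $g_j\mu$ (which is the only leverage the positivity of $\mu$ gives), is equivalent by duality to the $L^{1}(d\mu)$ extension estimate \eqref{dual}, i.e.\ to an $L^2(\Sn)\to L^1(d\mu)$ bound. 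Upgrading this to the $L^2(\Sn)\to L^2(d\mu)$ bound your scheme requires is a genuinely stronger statement: it is what the paper must prove from scratch in Theorem \ref{us} via \eqref{IV}, and it is not known to hold with the exponent $\beta_d(\alpha)$ for a general $\alpha$-dimensional measure. Your final paragraph correctly identifies this as the principal obstacle but offers no mechanism to overcome it (a Schur test on the bilinear form with kernel $\widehat\mu(\xi-\eta)$ over the annulus, or $\widehat\sigma(R(\omega-\omega'))$ over the sphere, loses the exponent, as you note). Since your energy identity forces you to work in $L^2(d\mu)$, the gap cannot be patched by retreating to the $L^1(d\mu)$ estimate within your framework; the proof is therefore incomplete at its central point.

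The paper avoids the issue by never leaving $L^1(d\mu)$ and by dispensing with the energy trick altogether: writing $S^{N,m}_t I_s*f$ in polar coordinates in frequency and moving the absolute value inside the radial integral, the oscillatory factor $e^{itR^m}$ has modulus one, so the suprema over $t$ and $N$ are controlled for free. At each fixed radius $R$ one applies precisely the dual form \eqref{dual} of the definition to $\big(\widehat f(R\,\cdot)\,d\sigma\big)^\vee(R\,\cdot)$, and a single Cauchy--Schwarz in the radial variable — which is where the condition $s>\tfrac{d-\beta}{2}$, i.e.\ $\beta>d-2s$, enters — sums the scales and yields the $L^1(d\mu)$ maximal estimate of Lemma \ref{loc}. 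If you want to salvage your argument, replace both the Littlewood--Paley/energy step and the unproven $L^2(d\mu)$ restriction estimate with this polar-coordinates computation.
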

 Estimates for the inhomogeneous spaces $H^s(\R^d)$ were proven in \cite{BBCR}, which puts unnecessary restrictions on the data $v_1$, but we will see that the implication also holds in this slightly more general context. Using Sj\"olin's bound  $\beta_d(\alpha)\ge \alpha-1$ they deduced that $\gamma_d(s)\le d+1-2s$, so a consequence of Theorem~\ref{us} is that $\gamma_d(1)< d-1$, ruling out divergence on spheres if the initial data belongs to the energy space $\dot{H}^1(\R^d)\times L^2(\R^d)$.

The exponent $\beta_d(\alpha)$ is also connected to dimension estimates for orthogonal projections; see for example the recent work of Oberlin--Oberlin \cite{OO}. For a related problem regarding Fourier convergence at the points where the function is zero, see \cite{CS} or \cite{CGV} and the references therein.

Although Theorem~\ref{us} yields new bounds for the Schr\"odinger equation, via an appropriate version of Proposition~\ref{propo}, those presented in Theorem~\ref{bil} follow by a more direct use of the techniques developed to prove Theorem~\ref{us}. Compared to the cone,  the paraboloid has an extra nonzero principal curvature, and so it is not always efficient to use Proposition~\ref{propo} in that case. For this reason we have presented the results for the Schr\"odinger equation in $\R^{n+1}$ and the results for the wave equation in~$\R^{d+1}$, where $d=n+1$, and this convention will be maintained throughout. 

The key ingredient in the proofs of Theorems \ref{bil} and \ref{us} will be the multilinear extension estimate due to Bennett, Carbery and Tao \cite{BCT}, which was first successfully employed to prove linear estimates by Bourgain and Guth \cite{BG}. We present the multilinear estimates in the Section~\ref{mult} and a decomposition due to Bourgain and Guth in Section~\ref{decomposition}. In Section~\ref{fourierdecay} we prove Theorem~\ref{us} and in Section~\ref{schrodingerconv} we prove Theorem~\ref{bil}. In Section~\ref{prop} we present the simple proof of Proposition~\ref{propo}, via polar coordinates. In the following section we prove our upper bound for $\beta_d(\alpha)$, using the number theoretic result.

\section{Proof of Theorem~\ref{new}}

This example is inspired by that of \cite{BBCRV}, however they remark that their arguments for the paraboloid do not appear to extend in a routine manner to the sphere.

We will require the following lemma.

\begin{lemma}\label{lemma:Meas1Tris}
Let $0<\alpha \leq d$ and $0 < \varepsilon, \kappa< 1$. For all $R >1$, define 
$$
\Lambda := \Big( R^{\kappa-1} \mathbb{Z}^{d} + B(0,\varepsilon R^{-1} ) \Big) \cap B(0,1)
$$
and
$d \mu := \chi_{\Lambda} dx$, where $dx$ is the Lebesgue measure on $\mathbb{R}^{d}$. 
Then
\begin{equation}\label{Formula:lemma:Meas1Tris}
c_{\alpha}(\mu)
\lesssim \max \big( R^{-d\kappa}, R^{\alpha-d}\big).
\end{equation}
\end{lemma}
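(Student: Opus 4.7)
The plan is to perform a straightforward case analysis on the radius $r$ in the definition of $c_\alpha(\mu)$, exploiting the two natural length scales of $\Lambda$: the ball radius $\varepsilon R^{-1}$ and the lattice spacing $R^{\kappa-1}$. Writing $\Lambda$ as the union $\bigcup_{y \in R^{\kappa-1}\mathbb{Z}^{d} \cap 2B(0,1)} B(y,\varepsilon R^{-1})$, one observes first that a ball $B(x,r)$ can meet at most
\[
N(r) \lesssim 1 + \bigl( r/R^{\kappa-1} \bigr)^{d}
\]
of these lattice balls (since the number of lattice points of $R^{\kappa-1}\mathbb{Z}^d$ in a ball of radius $r+\varepsilon R^{-1}\lesssim r$ is of this order whenever $r \geq \varepsilon R^{-1}$). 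Consequently $\mu(B(x,r)) \lesssim N(r)\,(\varepsilon R^{-1})^{d}$, while trivially $\mu(B(x,r))\leq |B(x,r)|\lesssim r^{d}$.

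I would then split the range of $r$ at the thresholds $\varepsilon R^{-1}$ and $R^{\kappa-1}$. For $r \leq \varepsilon R^{-1}$ the trivial bound gives $\mu(B(x,r))/r^{\alpha}\lesssim r^{d-\alpha}\lesssim R^{\alpha-d}$. For $\varepsilon R^{-1} \leq r \leq R^{\kappa-1}$, the ball meets $O(1)$ lattice balls, so $\mu(B(x,r))/r^{\alpha}\lesssim \varepsilon^{d}R^{-d}/r^{\alpha}$, which is maximised at the left endpoint and yields $\varepsilon^{d-\alpha}R^{\alpha-d}\lesssim R^{\alpha-d}$. Finally, for $R^{\kappa-1}\leq r \leq 1$ the second term of $N(r)$ dominates and
\[
\mu(B(x,r))/r^{\alpha} \lesssim r^{d-\alpha}\varepsilon^{d} R^{-d\kappa}\lesssim R^{-d\kappa},
\]
using $r\leq 1$ and $\alpha\leq d$. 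Taking the supremum over $x$ and $r$ yields the stated bound $c_{\alpha}(\mu)\lesssim\max(R^{-d\kappa},R^{\alpha-d})$.

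There is no real obstacle here; the only mild care needed is in the intermediate range, where one must invoke both the trivial volume bound and the \emph{lattice-count} bound so that the estimate remains continuous across $r\simeq \varepsilon R^{-1}$ (otherwise one would miss a factor of $\varepsilon^{d-\alpha}$, which is harmless since $\varepsilon<1$ and $\alpha\leq d$, but should be noted). The two terms in the final $\max$ correspond exactly to the two regimes where $\Lambda$ looks respectively like a single ball (small $r$) versus a uniformly distributed lattice of balls (large $r$), which is the reason one cannot do better than this max uniformly in $r$.
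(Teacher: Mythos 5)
Your proof is correct and follows essentially the same route as the paper's: a three-regime case analysis at the thresholds $\varepsilon R^{-1}$ and $R^{\kappa-1}$, with the trivial volume bound for small $r$ and a lattice-count bound for large $r$. The only (harmless) omission is $r>1$, where one simply caps the count $N(r)$ at the total number $\simeq R^{d(1-\kappa)}$ of lattice balls so that $\mu(B(x,r))\le\|\mu\|\lesssim R^{-d\kappa}$ and $r^{-\alpha}\le 1$, exactly as the paper does via the factor $\min(r^{d},1)$.
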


\begin{proof}
Notice that $\Lambda$ is the union of approximately $R^{d(1-\kappa)}$ balls of radius $\varepsilon R^{-1}$
whose centres are pairwise separated by $R^{\kappa-1}$.
We consider different cases depending on the size of $r$.

When $0 < r \leq \varepsilon R^{-1}$, the ball $B(x,r)$ overlaps with only one ball $B_j$ of $\Lambda$. Thus
\begin{eqnarray}\nonumber
r^{-\alpha} \mu(B(x,r)) 
& \le & 
r^{-\alpha} |B(x,r) \cap B_j|\\\nonumber
&\lesssim& r^{-\alpha} \min (r^{d}, R^{-d})
\\ \nonumber
& \le &
r^{d-\alpha} \lesssim R^{\alpha-d}.
\end{eqnarray} 
On the other hand, if $\varepsilon R^{-1} < r \leq R^{\kappa-1}$, then $B(x,r)$ overlaps with at most $ N \lesssim 2^{d}$ balls $B_j$, with $j = 1, \dots, N$, contained in $\Lambda$. Thus
\begin{eqnarray*}
r^{-\alpha} \mu(B(x,r)) 
&\leq&  
r^{-\alpha} \Big| B(x,r) \bigcap  \bigcup_{j=1,\dots, N} B_j \Big|\\
&\lesssim& 
r^{-\alpha} | B_N|\lesssim r^{-\alpha} R^{-d}\lesssim R^{\alpha-d}.
\end{eqnarray*} 
Finally, if  $R^{\kappa-1} < r$, then $B(x,r)$ overlaps with at most $N \lesssim \min(r^{d},1)R^{d(1-\kappa)}$ balls $B_j$, with $j = 1, \dots, N$, contained in $\Lambda$. In this case, 
\begin{eqnarray}\nonumber
r^{-\alpha} \mu(B(x,r)) 
& \leq &  
r^{-\alpha} 
\Big| B(x,r) \bigcap  \bigcup_{j=1,\dots, N} B_j \Big|
\\ \nonumber
& \lesssim & r^{-\alpha} N R^{-d}
\lesssim r^{-\alpha} \min(r^{d},1) R^{-d\kappa}.
\end{eqnarray} 
Now as
\begin{equation}\nonumber
r^{-\alpha} \min (r^{d}, 1) R^{-d\kappa} =
\left\{
\begin{array}{lcll}
r^{-\alpha+d} R^{-d\kappa} &\leq R^{-d\kappa} & \mbox{if} & r \leq 1 
\\
r^{-\alpha}R^{-d\kappa}  &\leq R^{-d\kappa} & \mbox{if} & r > 1, 
\end{array}
\right.
\end{equation}
 by collecting the three cases, the proof is complete.
\end{proof}

Let $\sigma$ denote the surface measure on $\mathbb{S}^{d-1}$, and write  $g=g_1-g_2+i(g_3-g_4)$, where each component $g_j$ is positive. Then by considering the positive measures $g_j\mu$, an application of the triangle inequality combined with \eqref{dz} tells us that
\begin{equation*}
\|\widehat{g\mu}(R\,\cdot\,)\|^2_{L^2(\mathbb{S}^{d-1})}\lesssim
c_{\alpha}(\mu)\|\mu\|R^{-\beta}\|g\|_{L^\infty(\mathbb{S}^{d-1})}
\end{equation*}
Thus, by duality, 
we are looking for an upper bound for the $\beta$ such that 
\begin{equation}\label{dual}
\|(f d \sigma)^\vee (R \, \cdot \,) \|_{L^{1}(d \mu)}
\lesssim 
R^{-\beta/2} \sqrt{c_{\alpha}(\mu) \| \mu \|} \| f \|_{L^{2}(\mathbb{S}^{d-1})},
\end{equation}
where
\begin{equation}\nonumber
(f d \sigma)^\vee (x) =\frac{1}{(2\pi)^{d/2}} \int_{\mathbb{S}^{d-1}} e^{i \omega \cdot x} f(\omega) d \sigma (\omega).
\end{equation}
We test this on the characteristic function  associated to $\Omega$ defined by \begin{equation}\nonumber
\Omega = \{ \omega \in \mathbb{S}^{d-1} \ :\ \dist (\omega, \Gamma) \leq \rho R^{-1}  \},
\end{equation}
where $\rho>0$ is sufficiently small, to be chosen later, and $\Gamma$ is defined by
\begin{equation}\nonumber
\Gamma = \{ \omega \in \mathbb{S}^{d-1} \ :\ R^{\kappa} \omega \in 2\pi\mathbb{Z}^{d} \},
\end{equation}
with  $0<\kappa<1$.
Considering  $f$ defined by
\begin{equation}\nonumber
f = \frac{\chi_{\Omega}}{\sqrt{ \sigma (\Omega)  }},
\end{equation}
we have  that $\| f \|_{L^{2}(\mathbb{S}^{d-1})} = 1$.

Now it is well-known (see for example \cite{F}) that for $d\ge 4$, there is a lower bound
\begin{equation}\nonumber
\# \Gamma \gtrsim R^{\kappa (d-2)},
\end{equation}
as long as $R$ is large enough and satisfies $(\tfrac{1}{2\pi}R^{\kappa})^2 \in \mathbb{N}$. Thus, for these values of~$R$, we have
\begin{equation}\label{WHU1Tis}
\sigma (  \Omega )  \gtrsim R^{\kappa (d-2) - (d-1)}.
\end{equation}
We claim that
\begin{equation}\label{rtTris}
|(f d \sigma)^\vee (Rx)|=\left|
\int_{\mathbb{S}^{d-1}}e^{ i \omega \cdot R x} f(\omega) d \sigma (\omega) 
\right|
\gtrsim \sqrt{\sigma ( \Omega )} 
\quad
\quad \forall \
x \in \Lambda,
\end{equation}
where, taking $\varepsilon$ sufficiently small, 
$\Lambda$ is defined by
\begin{equation}\nonumber
\Lambda = \Big(R^{\kappa - 1} \mathbb{Z}^{d} + B(0, \varepsilon R^{-1} )\Big) \cap B(0,1).
\end{equation}
The idea is that the phase of the integrand in \eqref{rtTris} never strays too far from zero modulo~$2\pi i$, and so the different pieces of the integral, corresponding to different pieces of~$\Omega$,  do not cancel each other out. 

More precisely we prove that
\begin{equation}\label{I*Tris}
\omega \cdot R x \in 2\pi \mathbb{Z} +(-\tfrac{1}{10},\tfrac{1}{10}),
\end{equation}
provided that $\omega \in \Omega$ and $x \in \Lambda$.  To see this, we write
\begin{equation}\nonumber
\omega = 2\pi R^{-\kappa} \ell   + v, \qquad \text{where}\quad
\ell \in \mathbb{Z}^{d}, \ \ | \ell | = \tfrac{1}{2\pi}R^{\kappa}, \ \ |v| < \rho R^{-1} 
\end{equation}
and
\begin{equation}\nonumber
x =   R^{\kappa-1}m + u, \qquad \text{where}\quad
m \in \mathbb{Z}^{d}, \ \  |m| < R^{1-\kappa}, \  \ |u| < \varepsilon R^{-1}, 
\end{equation}
so that
\begin{eqnarray*}
\omega \cdot R x 
& = &
( 2\pi R^{-\kappa} \ell + v) \cdot
(  R^{\kappa}m + R u)
\\ \nonumber
& = &
  2\pi \ell  \cdot m
+   v \cdot R^{\kappa} m
+   2\pi R^{1-\kappa} \ell \cdot u
+ v \cdot R u
\\
\nonumber 
& =: & I_{1}+I_{2}+I_{3}+I_{4}.
\end{eqnarray*}
Since $I_{1} \in 2\pi \mathbb{Z}$ and $| I_{2} | \leq  \rho R^{-1} R^{\kappa} R^{1-\kappa} =  \rho$, 
\begin{equation}\nonumber
| I_{3} | <    R^{1-\kappa}R^{\kappa} \varepsilon R^{-1} =   \varepsilon\quad \text{and}\quad
| I_{4} | < \rho R^{-1} R \varepsilon R^{-1} = \rho \varepsilon R^{-1},
\end{equation}
 we see that (\ref{I*Tris}) holds by taking  $\rho$ and $\varepsilon$ sufficiently small. 
This
implies that the phase in \eqref{rtTris} 
 is close enough to zero modulo $2\pi i$ as long as $x \in \Lambda$, yielding the bound.

Now, defining $\mu$ by
$
d \mu = \chi_{\Lambda} dx,$
where $dx$ is the Lebesgue measure in $\mathbb{R}^{d}$, and taking $\kappa = \frac{d-\alpha}{d}$, 
by Lemma~\ref{lemma:Meas1Tris}, we have
\begin{equation}\label{WHU2Tris}
c_{\alpha}(\mu)
\lesssim  R^{\alpha-d} = R^{-d \kappa}.
\end{equation}
On the other hand, 
\begin{equation}\label{WHU3Tris}
\|\mu\| = | \Lambda | \simeq R^{-d}R^{d(1-\kappa)}=R^{-d \kappa}.
\end{equation}
Now \eqref{dual} combined with (\ref{rtTris}) tell us that $$\sqrt{\sigma ( \Omega )}\|\mu\|\lesssim R^{-\beta/2}\sqrt{c_{\alpha}(\mu)\|\mu\|},$$ so that by plugging in  (\ref{WHU1Tis}),  (\ref{WHU2Tris}) and (\ref{WHU3Tris}), we obtain
\begin{equation}\nonumber
R^{\frac{1}{2}(\kappa (d-2) - (d-1))}\lesssim R^{-\beta/2} .
\end{equation}
Letting $R$ tend to infinity, we see that
\begin{equation}
\beta \leq d-1 - \kappa (d-2) 
=
\alpha - 1 + \frac{2(d-\alpha)}{d},
\end{equation}
and so taking $\beta$ sufficiently close to $\beta_d(\alpha)$, the proof is complete.\hfill $\Box$

\section{Multilinear extension estimates}\label{mult}

Here we present the multilinear extension estimates due to Bennett, Carbery and Tao \cite{BCT}. The extension operator, defined below, is also the adjoint of the operator that restricts the Fourier transform to a surface, and so they are also referred to as restriction estimates. 
We consider the surfaces $$S:=\{ (\xi,\phi(\xi))\in \R^{d}\, :\, |\xi|\le 1/2\}$$ with $\phi(\xi)=-|\xi|^2$ or $\phi(\xi)=\sqrt{1-|\xi|^2}$. 
For a cap $\tau= \left\{
(\xi, \phi(\xi) ) : \ \xi \in {Q}
\right\}\subset S$ associated to a cube $Q$,
we define the extension operator $T_{\tau}$ by
\begin{equation*}
T_{\tau} g(x,t) = \int_{{Q}}
g(\xi)\,e^{ix \cdot \xi +it\phi(\xi)}  d\xi,
\end{equation*}
Letting $ Y(\xi)\in \mathbb{S}^{d-1} $ be the outward unit normal vector at a point $(\xi, \phi(\xi))\in S$, 
 we say that the caps $\tau_{{1}}, \dots, \tau_{{m}}$
are $m$-transversal with constant $\theta>0$ if
\begin{equation*}\label{Def:m-Transversality}
|Y(\xi_1) \wedge \dots \wedge Y(\xi_m)|> \theta,
\end{equation*}
for all $\xi_1\in {Q}_1$, \ldots $\xi_m\in {Q}_m$. In the following theorem, and throughout, $B_{R}$ denotes a ball of radius $R$ with arbitrary centre.
\begin{theorem}\label{MultilinEstimates} \cite{BCT}
Let $d\ge 2$, $\varepsilon > 0$ and let $\tau_{{1}} , \dots , \tau_{{d}} \subset S$ be 
$d$-transversal caps with constant $\theta>0$.
Then,  for all $R > 1$,
\begin{equation*}
\bigg\|   
\prod_{k=1}^{d} 
T_{\tau_{{k}}} g
\bigg\|^{\frac{2}{d-1}}_{L^{\frac{2}{d-1}}(B_{R})} \lesssim \mathfrak{c}(\theta)
R^{\varepsilon}
\prod_{k=1}^{d} \| g \|^{\frac{2}{d-1}}_{L^{2}({Q}_{k})}.
\end{equation*}
\end{theorem}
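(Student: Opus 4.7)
The plan is to prove the estimate by induction on the scale $R$, reducing the curved problem on $B_R$ to an essentially flat problem on smaller balls $B_{R^{1/2}}$, where one can invoke a Loomis--Whitney / Brascamp--Lieb type inequality. Let $C(R)$ denote the best constant such that the stated inequality holds for all $d$-transversal caps with constant $\theta$ and all $g\in L^2$. The goal is to show $C(R) \lesssim_{\theta,\varepsilon} R^\varepsilon$; since the estimate is trivially true with some polynomial loss (by Cauchy--Schwarz and the trivial $L^\infty$ bound), it suffices to show an inequality of the form $C(R) \leq C_0(\theta) + R^{-\delta} C(R)^{1-\delta'}$ or of the self-improving type $C(R) \leq C_0(\theta)\, C(R/2)^{1-\delta}$, which bootstraps to the desired $R^\varepsilon$ loss.

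First, I would establish the \emph{flat model}: if the $d$ caps are replaced by pieces of $d$ hyperplanes whose normals $Y_1,\dots,Y_d$ satisfy $|Y_1\wedge\cdots\wedge Y_d|>\theta$, then each extension $T_k g_k$ depends on only $d-1$ variables (those transverse to $Y_k$), and the product estimate with exponent $2/(d-1)$ reduces, after an affine change of variables bringing $(Y_k)$ to the standard basis, precisely to the Loomis--Whitney inequality
\[
\Big\|\prod_{k=1}^d F_k(\pi_k(x))\Big\|_{L^{2/(d-1)}(\mathbb{R}^d)}^{2/(d-1)} \leq \prod_{k=1}^d \|F_k\|_{L^2}^{2/(d-1)},
\]
where $\pi_k$ is the projection orthogonal to $Y_k$. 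The $\theta$-dependence is exactly the Jacobian of the affine change of variables.

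Next, I would transfer this flat bound to the curved setting via a wave-packet / parabolic-rescaling argument. Tile each cap ${Q}_k$ into subcubes of sidelength $R^{-1/2}$ and decompose $g_k$ accordingly; the corresponding pieces of $T_k g_k$ are essentially constant on tubes of radius $R^{1/2}$ and length $R$ oriented along $Y_k$. Cover $B_R$ by balls $B_{R^{1/2}}$: on each such ball, the surface $S$ differs from its tangent hyperplane by $O(R^{-1})$, so over $B_{R^{1/2}}$ the caps look flat to within tolerance $R^{-1/2}$ in the direction of their normals. Applying the flat Loomis--Whitney estimate ball-by-ball, summing, and using Plancherel in the transverse directions reduces the global bound at scale $R$ to the multilinear bound at scale $R^{1/2}$ for rescaled extensions, producing the inductive inequality $C(R) \lesssim_\theta C(R^{1/2})\cdot R^{\varepsilon/2}$ or similar, which iterates $O(\log\log R)$ times to give the desired $R^\varepsilon$ loss.

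The principal obstacle is the robustness of the Loomis--Whitney inequality under perturbation: the caps at scale $R^{-1/2}$ are only approximately hyperplanar and their normals only approximately constant, so one genuinely needs a \emph{uniform} Brascamp--Lieb inequality in a neighborhood of the Loomis--Whitney datum. This is the heart of the Bennett--Carbery--Tao argument, where it is proved through a heat-flow monotonicity formula: one shows that the relevant Brascamp--Lieb quotient is nonincreasing along the heat semigroup and converges at $t=\infty$ to its Gaussian extremizer value, which in the geometric position coincides with the sharp Loomis--Whitney constant. Absorbing the $\theta$-dependence through the Brascamp--Lieb constant $\mathfrak{c}(\theta)$ then completes the bound. (Alternatively, Guth's polynomial partitioning method provides a route to the same conclusion without the heat-flow machinery, but still relies on the flat Loomis--Whitney inequality as the irreducible input.)
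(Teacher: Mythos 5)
The paper does not prove Theorem~\ref{MultilinEstimates}: it is quoted directly from Bennett--Carbery--Tao \cite{BCT}, so there is no internal proof to compare against. Your outline is a faithful sketch of the argument in \cite{BCT} --- the flat Loomis--Whitney model, its perturbation-stable version obtained via heat-flow monotonicity (equivalently, the near-optimal multilinear Kakeya estimate), and the induction on scales $R\to R^{1/2}$ iterated $O(\log\log R)$ times --- though it remains an outline rather than a proof, since both the monotonicity formula and the wave-packet bookkeeping that converts the ball-by-ball Loomis--Whitney application into the recursive inequality $C(R)\lesssim_\theta R^{C\varepsilon}C(R^{1/2})$ are taken on faith.
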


The exact dependence of $\mathfrak{c}$ on $\theta$ is an interesting open question. 
The following version is lower dimensional and it has also been discretised as in  \cite[pp. 1250]{BG}. This is the version we will require in the following section.
\begin{proposition}\cite{BG}\label{MainBouLemma}
Let $0<\varepsilon<\frac{1}{4d}$ and  
let 
$\tau_{{1}}, \dots , \tau_{{m}}\subset \tau$ be
$m$-transversal caps with constant $\theta$, where $2 \leq m \leq d-1$. Let $\mathbb{V}_{\!m}$ be an $m$-dimensional 
subspace of $\mathbb{R}^{d}$  and let ${Q}_{j_k} \subset {Q}_{{k}}$ 
be disjoint cubes of side length $1/K$ such that 
$
\dist \left( Y(\xi), \mathbb{V}_m \right) \leq 1/K
$
for some $\xi\in {Q}_{j_k}$.
Then, for all $K > 1$,
\begin{equation*}\label{LowerDimDiscreteMultilinear} 
\fint_{B_{K}} 
\prod_{k=1}^{m} 
\bigg| \sum_{j_k} 
T_{\tau_{j_k}} g 
\bigg|^{\frac{2}{m-1}} 
 \lesssim \mathfrak{c}(\theta) K^{\varepsilon}
\Bigg(
\fint_{B_{K}}
\prod_{k=1}^{m} 
\bigg(\sum_{j_k}
\big|
T_{\tau_{j_k}} g \big|^{2}
\bigg)^{\frac{1}{2m}}
\Bigg)^{\frac{2m}{m-1}}\!\!\!.
\end{equation*}
\end{proposition}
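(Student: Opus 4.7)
My proof plan reduces the inequality to an application of Theorem~\ref{MultilinEstimates} in dimension $m$, by exploiting the hypothesis on the normals to collapse the extension problem onto the $m$-dimensional subspace $\mathbb{V}_m$. After a rigid motion I would arrange that $\mathbb{V}_m=\mathbb{R}^m\times\{0\}^{d-m}$, writing $x=(x',x'')\in\mathbb{R}^m\times\mathbb{R}^{d-m}$ and splitting the frequency variable similarly. The hypothesis $\dist(Y(\xi),\mathbb{V}_m)\leq 1/K$ on each subcube $Q_{j_k}$ forces the projection of $\tau_{j_k}\subset S$ onto $\mathbb{V}_m^{\perp}$ to lie in a ball of radius $O(1/K)$ in $\mathbb{V}_m^{\perp}$; equivalently, the Fourier transform of $T_{\tau_{j_k}}g$ in the $x''$ variable is supported in a ball of radius $O(1/K)$, so $|T_{\tau_{j_k}}g|$ is essentially constant in $x''$ at scale $K$ on $B_K$.

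Using this observation, together with a Taylor expansion of $\phi$ on $Q_{j_k}$ (whose quadratic remainders contribute at most $O(1/K)$ to the phase on $B_K$), I would write, modulo a negligible error,
\begin{equation*}
T_{\tau_{j_k}}g(x',x'')\;=\;e^{ix''\cdot\eta''_{j_k}}\,\widetilde{T}_{\widetilde{\tau}_{j_k}}\widetilde{g}_{j_k,x''}(x'),
\end{equation*}
where $\widetilde{T}$ is the $m$-dimensional extension operator over an $(m-1)$-dimensional projected hypersurface $\widetilde{S}\subset\mathbb{V}_m$ (whose non-degeneracy is inherited from $S$) and $\widetilde{g}_{j_k,x''}$ is the partial Fourier transform of $g\chi_{Q_{j_k}}$ in the $\xi''$ variables at height $x''$. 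The $m$-transversality of the big caps $\tau_1,\ldots,\tau_m$ descends to $m$-transversality of the projected caps $\widetilde{\tau}_1,\ldots,\widetilde{\tau}_m\subset\widetilde{S}$ with constant comparable to $\theta$.

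I would then apply Fubini, fix $x''\in B_K^{d-m}$, and invoke the discretised form of Theorem~\ref{MultilinEstimates} in dimension $m$ (cf.\ \cite[p.~1250]{BG}) on the slice $B_K^m$. This produces the desired $L^{2/(m-1)}$ estimate in $x'$ with the square function on the right. Integrating back in $x''$ and using Plancherel in the $\xi''$ variable to reassemble $\sum_{j_k}\|\widetilde{g}_{j_k,x''}\|_{L^2}^2$ into $\sum_{j_k}|T_{\tau_{j_k}}g|^2$ yields the claimed $d$-dimensional bound.

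The main obstacle will be the careful bookkeeping: the $O(1/K)$ deviations of the normals from $\mathbb{V}_m$, the quadratic Taylor errors, the mismatch between the actual caps on $S$ and their projected images on $\widetilde{S}$, and the mild variation of $T_{\tau_{j_k}}g$ in the $x''$ direction (on a scale $K$ matching the diameter of $B_K$) must all be absorbed into the $K^{\varepsilon}$ factor. This is where the induction-on-scales framework of \cite{BCT} plays its part: it is robust enough to tolerate these perturbations without disturbing the $K^{\varepsilon}$ loss structure, and it also yields the $\mathfrak{c}(\theta)$ dependence on the transversality constant.
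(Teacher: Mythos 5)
The paper offers no proof of this proposition--it is quoted directly from Bourgain--Guth \cite[pp.~1250]{BG}--and your outline is precisely the standard reduction used there: slice away the $d-m$ directions in which the operator is locally constant at scale $K$, apply the ($K$-discretised) $m$-linear Bennett--Carbery--Tao estimate on an $(m-1)$-dimensional hypersurface inside $\mathbb{R}^m$, and reassemble. One small imprecision: the hypothesis $\dist(Y(\xi),\mathbb{V}_m)\leq 1/K$ constrains the \emph{normals}, so the degenerate piece of $S$ carrying the caps lies within $O(1/K)$ of an $m$-plane $W$ that coincides with $\mathbb{V}_m$ for the sphere but is generally only an $O(1)$ linear image of it for the paraboloid; the slicing coordinates $x''$ must be taken transverse to $W$, not to $\mathbb{V}_m$, after which the transversality of the projected caps is inherited with constant comparable to $\theta$ (for $K\gtrsim\theta^{-1}$, which is the regime in which the proposition is applied). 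This is exactly the kind of bookkeeping your final paragraph anticipates, so the plan is sound.
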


In fact, due to rescaling arguments we will require these estimates for slightly more general phases $\phi$. Note first that, as we are only interested in the modulus of the extension operator, we are free to add and subtract constants to the phase $\phi$ and so we work instead with $\phi(\xi)=\sqrt{1-|\xi|^2}-1$ in the spherical case so that it looks very similar to the parabolic case. Then, for $\xi_0\in \{ \xi\in \R^{d-1} :  |\xi|\le 1/2-\delta/2\}$ and $0<\delta< 1$, we define the scaling map $S_{\xi_0,\delta}$ by 
$$
S_{\xi_0,\delta}\phi(\xi)=\delta^{-2}\Big(\phi(\xi_0+\delta\xi)-\delta\nabla \phi(\xi_0)\cdot \xi-\phi(\xi_0)\Big).
$$
Note that the paraboloid is unchanged by this operation, and the sphere is changed only very mildly. 
The estimates of this section hold uniformly for all the extension operators defined with a phase obtained by applying the scaling map a finite number of times to $\phi$.

Finally we present a globalised-in-space version of Theorem~\ref{MultilinEstimates} that we will need in the final sections. It follows by a standard localisation argument.

\begin{proposition}\label{MultilinEstimates2}
Let $\varepsilon > 0$, $p=\frac{2d}{d-1}$ and let $\tau_{{1}} , \dots , \tau_{{d}} \subset S$ be 
$d$-transversal caps with constant $\theta>0$. Let $\{\Omega\}$ be a partition of $\R^{d-1}$ into cubes of side length $R$. 
Then,  for all $R > 1$,
\begin{equation*}
\bigg\|   
\prod_{k=1}^{d} 
|T_{\tau_{{k}}} g|^{\frac{1}{d}}
\bigg\|^{2}_{L^{p}(\R^{d-1}\times (-R,R))}\le \sum_{\Omega} \bigg\|   
\prod_{k=1}^{d} 
|T_{\tau_{{k}}} g|^{\frac{1}{d}}
\bigg\|^{2}_{L^{p}(\Omega\times (-R,R))} \lesssim \mathfrak{c}(\theta)
R^{\varepsilon}
\| g \|^2_{2}.
\end{equation*}
\end{proposition}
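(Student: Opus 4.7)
The plan is as follows. The first inequality is immediate from subadditivity: since $p = \frac{2d}{d-1} \ge 2$ when $d \ge 2$, we have $2/p \le 1$, and so for $a_\Omega := \int_{\Omega \times (-R,R)} F^p$ with $F = \prod_{k=1}^d |T_{\tau_k}g|^{1/d}$,
$$
\bigg(\sum_\Omega a_\Omega\bigg)^{2/p} \le \sum_\Omega a_\Omega^{2/p},
$$
which is exactly the first claimed inequality.

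For the second inequality, I would first observe that each slab $\Omega \times (-R,R)$ is contained in a ball $B$ of radius $\sim R$, so Theorem~\ref{MultilinEstimates} applied on $B$ gives, using the identity $\|F\|_{L^p}^p = \|\prod_k T_{\tau_k}g\|_{L^{2/(d-1)}}^{2/(d-1)}$ (which holds because $p/d = 2/(d-1)$) and AM-GM on $\prod_k \|g\|_{L^2(Q_k)}^{2/d}$,
$$
\|F\|_{L^p(\Omega \times (-R,R))}^2 \lesssim \mathfrak{c}(\theta)R^\varepsilon \|g\|_2^2.
$$
This is the right per-slab magnitude but is uniform in $\Omega$, so the naive sum diverges. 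The task is therefore to replace $\|g\|_2^2$ on the right by an $\Omega$-dependent quantity $\|g_\Omega\|_2^2$ satisfying $\sum_\Omega \|g_\Omega\|_2^2 \lesssim \|g\|_2^2$.

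I would accomplish this localization via a wave packet decomposition of $g$ at scale $R^{1/2}$: write $g = \sum_\alpha c_\alpha \psi_\alpha$, where $\psi_\alpha$ is frequency-localized at $\xi_\alpha$ to scale $R^{-1/2}$ and (after Fourier transform) spatially localized at $y_\alpha \in R^{1/2}\mathbb{Z}^{d-1}$. Then $T_{\tau_k}\psi_\alpha$ is essentially concentrated on the space-time tube of width $R^{1/2}$ along the line $\{(y_\alpha - t\nabla\phi(\xi_\alpha),t):t\in\mathbb{R}\}$; over $t\in(-R,R)$ this tube has spatial extent $O(R)$ (group velocities being bounded), so each tube meets only $O(1)$ of the slabs $\Omega\times(-R,R)$. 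Letting $g_\Omega$ be the sum of those $c_\alpha\psi_\alpha$ whose tubes meet $\Omega\times(-R,R)$, one has $T_{\tau_k}g\approx T_{\tau_k}g_\Omega$ on that slab, so applying Theorem~\ref{MultilinEstimates} with $g_\Omega$ in place of $g$ furnishes the improved per-slab bound with $\|g_\Omega\|_2^2$ on the right. Summing, the $O(1)$-overlap of the tube-to-slab assignment together with $L^2$-orthogonality of the wave packets yields $\sum_\Omega \|g_\Omega\|_2^2 \lesssim \|g\|_2^2$, which proves the second inequality.

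The main obstacle will be the clean bookkeeping of the Schwartz tails of $T_{\tau_k}\psi_\alpha$ outside its main tube. Although each tail decays as $(1+R^{-1/2}\dist)^{-N}$, contributions can accumulate across many slabs. A standard geometric-series-in-the-distance argument, with any arising polynomial losses absorbed into the $R^\varepsilon$ factor via a rechoice of $\varepsilon$, should resolve this.
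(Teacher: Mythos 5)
Your proof of the first inequality is exactly the paper's ($\ell^2\subset\ell^p$ for $p=\tfrac{2d}{d-1}\ge 2$), and your strategy for the second --- localise $g$ to each slab, apply Theorem~\ref{MultilinEstimates} slab by slab, and sum using bounded overlap of the localisations plus rapid decay of what is discarded --- is the strategy the paper follows; only the localisation device differs. The paper does not use wave packets: it sets $g_\Omega=\big((g\chi_{Q_k})^\vee\chi_{\Omega^*}\big)^{\wedge}$ with $\chi_{\Omega^*}$ a Schwartz cutoff adapted to a dilate $\Omega^*$ of $\Omega$ of side length $\simeq R$ (enlarged by $\sup|1+\nabla\phi|$ so as to contain the group-velocity transport over $|t|\le R$), so that the finite overlap of the $\Omega^*$ gives $\sum_\Omega\|g_\Omega\|_2^2\lesssim\|g\|_2^2$ with no almost-orthogonality input, and the discarded piece is controlled in one stroke by writing $Tg_{\Omega^c}$ as a convolution against $T[1]$ and integrating by parts, which yields $\sum_\Omega\|Tg_{\Omega^c}\|_{L^2(\Omega\times(-R,R))}^2\lesssim R^{-N}\|g\|_2^2$ --- this is the clean form of your ``geometric series in the distance''. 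Your wave-packet decomposition at scale $R^{1/2}$ also works, but it is heavier than needed (the localisation only has to match the slab scale $R$), and you should make one step explicit: ``$T_{\tau_k}g\approx T_{\tau_k}g_\Omega$ on the slab'' must be expanded into the mixed terms of the $d$-fold product containing at least one factor $Tg_{\Omega^c}$, each of which is estimated by pairing the $R^{-N}$-smallness of that factor on the slab with crude H\"older/Bernstein bounds on the remaining factors before summing in $\Omega$; the paper carries this out in \eqref{mes}--\eqref{rt}.
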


\begin{proof} Noting that the first inequality is nothing more than the inclusion $\ell^2\subset \ell ^p$, it remains to prove the second which we rewrite as
\begin{equation*}
\sum_{\Omega} \bigg\|   
\prod_{k=1}^{d} 
T_{\tau_{k}} g
\bigg\|^{{2/d}}_{L^{\frac{2}{d-1}}(\Omega\times (-R,R))} \lesssim \mathfrak{c}(\theta)
R^{\varepsilon}
\| g \|^2_{2}.
\end{equation*}
For this we write $g_{\Omega}=\big((g\chi_{Q_k})^\vee\chi_{\Omega^*})^\wedge$ and $g_{\Omega^c}=g\chi_{Q_k}-g_{\Omega}$, where $\chi_{\Omega^*}$ is a  Schwartz function adapted to the cube $\Omega^*$, with same centre as $\Omega$, but with side length \begin{equation}\label{dew}10\sup_{|\xi|\le 1/2} |1+\nabla \phi(\xi)|R.\end{equation} Now that we have taken the support restriction inside the definition of the functions, we will consider the operator $T$ defined by
\begin{equation}\label{fut}
T g(x,t) = \int_{\R^{d-1}} \psi(\xi)
g(\xi)\,e^{ix \cdot \xi +it\phi(\xi)}  d\xi,
\end{equation}
where $\psi$ is a Schwartz function supported in the unit ball and equal to one on $|\xi|\le 1/2$.
By applications of the triangle inequality it would then suffice to  bound the main term as
\begin{equation*}
\sum_{\Omega} \bigg\|   
\prod_{k=1}^{d} 
T g_\Omega
\bigg\|^{{2/d}}_{L^{\frac{2}{d-1}}(\Omega\times (-R,R))} \lesssim \mathfrak{c}(\theta)
R^{\varepsilon}
\| g \|^2_{2},
\end{equation*}
and prove other mixed inequalities, like for example
\begin{equation}\label{mes}
\sum_{\Omega} \bigg\|   
T g_{\Omega^c}\prod_{k=2}^{d} 
T g_\Omega
\bigg\|^{{2/d}}_{L^{\frac{2}{d-1}}(\Omega\times (-R,R))} \lesssim 
\| g \|^2_{2}.
\end{equation}
The main term is bounded directly using Theorem~\ref{MultilinEstimates} and the finite overlapping of the frequency supports. For the second estimate we first note that
by H\"older's inequality, followed by Bernstein's inequality (or Young's inequality given the compact frequency support and the reproducing formula that it yields, see below)  and Plancherel's identity in the $x$-variable, the left-hand side of \eqref{mes} is bounded by
\begin{equation*}
 \sum_{\Omega} \Big(\Big\| \|  
T g_{\Omega^c}\|_{L^2(\Omega)}
\|g_\Omega e^{i t\phi(\cdot)}\|^{d-1}_{2}
\Big\|_{L^{\frac{2}{d-1}}(|t|\le R)}\Big)^{{2/d}}.
\end{equation*}
Then by H\"older's inequality in the time integral, we see that this is bounded by
\begin{equation*}
 R^\frac{d-2}{d}\sum_{\Omega} \Big( \|  
Tg_{\Omega^c}\|_{L^2(\Omega\times (-R,R))}
\|g_\Omega\|^{d-1}_{2}
\Big)^{{2/d}}.
\end{equation*}
A final application of H\"older's inequality in the sum, and the finite overlapping of the frequency supports, shows that this is bounded by 
\begin{equation*}
 R^\frac{d-2}{d}\|g\|_2 \Big(\sum_{\Omega}\|  
T g_{\Omega^c}\|^2_{L^2(\Omega\times (-R,R))}\Big)^{1/2}.
\end{equation*}
Thus in order to complete the proof of \eqref{mes}, we need only prove that
\begin{equation}\label{rt}
\sum_{\Omega}\|  
T g_{\Omega^c}\|^2_{L^2(\Omega\times (-R,R))}\lesssim R^{-N}\|g\|^2_2
\end{equation}
for large enough $N\in\mathbb{N}$.

For this we write the operator as a convolution, 
\begin{equation*}
T g_{\Omega^c}(x,t) = \int_{\R^{d-1}} \psi(\xi)
g_{\Omega^c}(\xi)\,e^{ix \cdot \xi +it\phi(\xi)}  d\xi=\int_{z-x\notin  \Omega^*}\!\!\!\!\!T [1] (z,t) (g\chi_{Q_1})^\vee(z-x) dz.
\end{equation*}
Recalling the definitions \eqref{dew} and \eqref{fut}, we have that $|z| \ge 2|\nabla\phi(\xi)t|$ when $(x,t)\in \Omega\times(-R,R)$, so by repeated integration by parts we see that
$$
|T [1] (z,t)|\lesssim C_N(1+|z|)^{-N-d-1}, 
 $$
so that, for $(x,t)\in \Omega\times(-R,R)$, we have
\begin{equation*}
|T g_{\Omega^c}(x,t)| \lesssim R^{-N-1}\int_{\R^{d-1}} (1+|z|)^{-d} |(g\chi_{Q_1})^\vee(z-x)| dz.
\end{equation*}
Plugging this into \eqref{rt}, and integrating in time, we see that
\begin{align*}
\sum_{\Omega}\|  
T g_{\Omega^c}\|^2_{L^2(\Omega\times (-R,R))}&\lesssim R^{-N}\int_{\R^{d-1}}\Big|\int (1+|z|)^{-d} |(g\chi_{Q_1})^\vee(z-x)| dz\Big|^2 dx\\
&\lesssim R^{-N}\|g\|^2_2,
\end{align*}
where the final inequality is by Young's inequality and the Plancherel identity. This completes the proof of \eqref{rt} and thus \eqref{mes}, and the other mixed terms are bounded in an analogous manner. 
\end{proof}

\section{The Bourgain--Guth decomposition}\label{decomposition}

In order to take advantage of the multilinear estimates, we must first decompose the operator in such a way that transversality presents itself. In order to take advantage of bilinear estimates, this can be done by employing something like a Whitney decomposition.
A triumph of the work of Bourgain and Guth \cite{BG} was to achieve something similar in the multilinear setting. In fact they use the lower dimensional multilinear estimates of Proposition~\ref{MainBouLemma} in order to create the \lq decomposition' (really it is an inequality) and in the coming sections we will need pointwise control this. Indeed we will make essential use of the fact that the right-hand side of the inequality is almost constant at certain scales.
As they point out, this only holds after mollifications, 
and the final decomposition is obtained by an iteration. In this section, we keep track of some of the details that they omitted so as to check that these approximations, as well as the lack of control of the constant~$\mathfrak{c}$ from the previous section, do not feedback in an uncontrolled way.

Let ${Q}\subset \{ \xi\in\R^{d-1} : |\xi|\le 1/2\}$ be a box of side length $\delta$  and let $\tau$ denote the associated cap. 
Take $0 < \varepsilon <\frac{1}{4d}$  
and $R > 1$
and introduce $d$ different scales
\begin{equation*}\label{Scales}
R^{1/\mathfrak{c}(\varepsilon)}< K_{2} < 
\dots<
K_{d+1}<
R^{\varepsilon}
\end{equation*}
that satisfy $K_m^{8m}\mathfrak{c}(K_{m}^{-m}) \leq K^\varepsilon_{m+1}$, where $\mathfrak{c}(\varepsilon)\ge 1/\varepsilon^{2d}$ dominates  the constant from the previous section. As long as it does not blow up at zero in a very unexpectedly fast way, it would suffice to  take $K_m\simeq R^{\varepsilon^{2(d+2-m)}}$. One can calculate that we also have  $R^{\frac{1}{\varepsilon \mathfrak{c}(\varepsilon)}}K_m\le K_{m+1}$.

Take a partition $\{ {Q}_{2,\ell} \}$ 
of ${Q}$
made of pairwise disjoint
cubes
of side length $\delta/K_{2}$ and 
centered in $\xi_{\ell}$.
Then, for all $m = 3, \dots, d$, 
define recursively a
sub-partition
$\{ {Q}_{m,j} \}$
made by pairwise disjoint cubes of side length $\delta/K_{m}$ and
centered at
$\xi_{j}$  
in such a way that
for every ${Q}_{m,j}$ 
there exists an ${Q}_{m-1, \ell}$
that contains~it. For this we need to suppose that $R>2^{\mathfrak{c}(\varepsilon)}$ in order to have room to choose the scales appropriately, and so this is assumed from now on.

We say that the caps $\tau_{m,j}$ associated to ${Q}_{m,j}$ are at scale $\delta/K_{m}$. 
Recalling that
$$
T_{\tau_{m,j}} g(x , t)  = 
\int_{{Q}_{m,j}} \!g(\xi)\,e^{i x \cdot \xi +it \phi(\xi)} d \xi,
$$
for each $m = 2, \dots, d$, we have 
$$
T_{\tau} g   =
\sum_{j}  T_{\tau_{m,j}} g .
$$
We will also need a restricted version of  $T_{\tau_{m,j}}$.
 Let $\mathbb{V}_{\!m}$ be an $m$-dimensional subspace of  $\mathbb{R}^{d}$
and define  
\begin{align}\nonumber
 {T}^{\mathbb{V}_{\!m}}_{\tau_{m,j}} g 
&:=  \sum_{ \substack{ \upsilon \subset \tau_{m,j} \\ \upsilon \in V_{\tau, m} } } {T}_{\upsilon} g,  
\end{align}
where
$V_{\tau, m}
:=
\left\{\, \tau_{m+1,\ell}\subset \tau \ : \ \dist ( Y(\xi) , \mathbb{V}_{\!m} ) 
\leq \delta/K_{m+1} \ \text{for some}\ \xi\in {Q}_{m+1,\ell}\,\right\}.
$

The following pointwise estimate \cite[pp. 1256]{BG} 
will be a key ingredient:
\begin{eqnarray} \label{ZeroBouFormula}
& |T_{\tau} g(x,t)| &
  \lesssim  K_{d}^{2d}\!\!\! \max_{\tau_{1}, \dots , \tau_{d}\subset \tau} 
\prod_{k=1}^{d} | T_{\tau_{k}}g(x,t) |^{\frac{1}{d}}
\\  \nonumber
 &&\qquad\quad +  \sum_{m=2}^{d-1} K_{m}^{2m}\!\!\!\!\! 
\max_{\substack{\mathbb{V}_{\!m} \\ \tau_{1}, \dots , \tau_{m}\subset \tau}} 
\prod_{k=1}^{m} 
| T^{\mathbb{V}_{\!m}}_{\tau_{k}}g(x,t) |^{\frac{1}{m}}
+
\sum_{m=2}^{d}\max_{\tau_{m}\subset \tau} 
| T_{\tau_{m}} g(x,t) |.
\end{eqnarray} 
Here the caps 
$\tau_{1}, \dots , \tau_{m} $ in the first two maxima are 
$m$-transversal
at scale $\delta/K_{m}$, and the final maximum is over caps $\tau_{m}$ at scale $\delta/K_{m}$.
 This is proved by iterating the following dichotomy: either the operator is bounded by a product of $m+1$ operators associated to transversal caps, or it is not, in which case, given $m$ caps where the operator is large and the hyperplane $\mathbb{V}_m$ that their normals generate, the operators associated to the caps with normal lying outside of  $\mathbb{V}_m$ must be small.

The uncertainty principle tells us that the terms 
should be essentially constant at different scales $\delta/K$.
This can be formalised by replacing them
with suitable majorant functions. Indeed, define the dual set $\tau'$ to be the $d$-dimensional cuboid with dimensions $\delta^{-1}\times \ldots\times \delta^{-1}\times \delta^{-2}$ centred at the origin, and with long side normal to $\tau$ (pointing in the direction of  the normal $Y_\tau$ to the centre of  a cap $\tau$).  The scaled version $K\tau'$ denotes the similar set but with dimensions $K\delta^{-1}\times \ldots\times K\delta^{-1}\times K\delta^{-2}$.
Let $\widehat{\psi}=\widehat{\psi}_{\mathrm{o}}\ast \widehat{\psi}_{\mathrm{o}}$ be a smooth radially symmetric 
cut-off function, supported on $B(0,d) \subset \mathbb{R}^{d}$ and equal to one on $B(0,\sqrt{d}) \subset \mathbb{R}^{d}$ and let $\psi_{K\tau'}$ denote the scaled  version of  $\psi$ adapted to $K\tau'$. By this we mean that 
\begin{equation}\label{Def:EtaAlphaStar}
\psi_{K\tau'} (x, t) := 
\frac{\delta^{d+1}}{K^{d}}
\psi 
\left( \frac{\delta x'}{K} , \frac{\delta^2 t'}{K}   \right),\qquad (x',t')=\Lambda_\tau (x,t),
\end{equation}
where $\Lambda_{\tau} \in SO(d)$ and 
$\Lambda_{\tau}(Y_{\tau}) = (0, \dots , 0, 1)$.
By the modulated reproducing formula, 
\begin{equation*}
| T_{\tau}g | \leq 
| T_{\tau} g | * 
|\psi_{\tau'}|.
\end{equation*}
and one can also calculate (see Lemma \ref{TechnicLemma2} of  the appendix) that
\begin{equation*}
| T_{\tau}g | \lesssim 
\left(
| T_{\tau} g |^{\frac{1}{m}} * 
| \psi_{\tau'}|^{\frac{1}{m}} 
\right)^{m},
\end{equation*}
for any $m\ge 1$. This yields
\begin{equation*}
| T_{\tau}g |^{\frac{1}{m}} \lesssim 
| T_{\tau} g |^{\frac{1}{m}} * 
\zeta_{\tau'},\qquad \zeta (x,t) :=
\left(1 +  |x|^{2} 
+
|t|^{2} \right)^{- \mathfrak{c}(\varepsilon)},
\end{equation*}
and as $\zeta_{\tau'}$ is essentially constant on translates of  $\tau'$, which is a property that is preserved under convolution, we have majorised by an essentially constant function. 
By elementary trigonometry one sees that $\frac{1}{2}K_m\tau'\subset \upsilon'$ whenever  $\upsilon\subset\tau$ is at scale $\delta/K_m$,  so that the dual of  the latter is contained in the former and so
$$
| T^{\mathbb{V}_{\!m}}_{\upsilon}g |^{\frac{1}{m}}\lesssim | T^{\mathbb{V}_{\!m}}_{\upsilon}g |^{\frac{1}{m}}*\zeta_{K_m\tau'}.
$$
Using these observations, \eqref{ZeroBouFormula} can be rewritten as
\begin{eqnarray} \label{FirstBouFormula}
& |T_{\tau} g| &
  \lesssim  K_{d}^{2d}\!\! \max_{\tau_{1}, \dots , \tau_{d}} 
\prod_{k=1}^{d} | T_{\tau_{k}}g |^{\frac{1}{d}}*\zeta_{\tau_k'}
\\  \nonumber
 &&+  \sum_{m=2}^{d-1} K_{m}^{2m}\!\!
\max_{\substack{\mathbb{V}_{\!m} \\ \tau_{1}, \dots , \tau_{m}}} 
\prod_{k=1}^{m} 
| T^{\mathbb{V}_{\!m}}_{\tau_{k}}g |^{\frac{1}{m}}*\zeta_{K_{m}\tau'}
+
\sum_{m=1}^{d-1}\max_{\tau_{m+1}} 
| T_{\tau_{m+1}} g |*\zeta_{\tau_{m+1}'},
\end{eqnarray} 
where as before
$\tau_{1}, \dots , \tau_{m}\subset \tau $ are 
$m$-transversal caps
at scale $\delta/K_{m}$ and the maximum in the last term is
taken over caps of  size $\delta/K_{m+1}$.

\begin{remark}\label{RemOnV}
The maximum over $\tau_{1}, \dots , \tau_{m}, \mathbb{V}_{\!m}$ depends on the value of  $(x,t)$, but
 we can now choose the same 
$\tau_{1}, \dots , \tau_{m}, \mathbb{V}_{\!m}$ for all $(x,t)$
in  a translate of  $K_m\tau'$. In fact, given the dichotomy with which the initial decomposition is obtained, $\mathbb{V}_{\!m}$ can be chosen to be the same in any translate of $K_{m+1}\tau'$. This is because we only need to consider this lower dimensional case in the absence of $m+1$ transversal caps for which the operator is large. These caps are at scale $K_{m+1}$ and so the definition of the subspace $\mathbb{V}_{\!m}$ can be taken uniformly at that scale.
\end{remark}

\begin{definition} Set $\Phi_{\tau,\mathbb{V}_1,\tau_{2}}=1$ and, for $m = 2, \dots, d-1$, define
\begin{equation*}\label{PhiDef}
\Phi_{\tau,\mathbb{V}_m,\tau_{m+1}}
:=
\frac{
K_{m}^{2m} \max_{\tau_{1}, \dots , \tau_{m}\subset \tau}
\prod_{k=1}^{m} 
| T^{\mathbb{V}_{\!m}}_{\tau_{k}}g|^{\frac{1}{m}}*\zeta_{K_{m}\tau'}
+
| T_{\tau_{m+1}} g |*\zeta_{\tau_{m+1}'}}
{\left( \sum_{\upsilon \in V_{\tau, m}\cup\{\tau_{m+1}\}} 
\left(
|T_{\upsilon}g|*\zeta_{\upsilon'}
\right)^{2} \right)^{1/2}
+ R^{-1/\varepsilon}\| g \|_{L^{2}}}.
\end{equation*}
\end{definition}
\noindent With this function, the decomposition
(\ref{FirstBouFormula}) can be rewritten as
 \begin{align}\label{SecondBouFormulaConTThirdLine} |T_{\tau} g| &
 \lesssim  K_{d}^{2d}\!\!\max_{\tau_{1}, \dots , \tau_{d}\subset \tau} 
\prod_{k=1}^{d} |T_{\tau_{k}} g|^{\frac{1}{d}} * \zeta_{\tau_k'} 
\\   \nonumber
&\quad +  \sum_{m=1}^{d-1}  \max_{\mathbb{V}_{\!m},\tau_{m+1}}
\Phi_{\tau,\mathbb{V}_m,\tau_{m+1}}
\Bigg(
\sum_{\upsilon \in V_{\tau, m}\cup\{\tau_{m+1}\}} 
\big(
|T_{\upsilon} g| * 
\zeta_{\upsilon'}\big)^{2} \Bigg)^{1/2}   
+ \mathcal{R}_{\tau}(g),
\end{align} 
where the remainder term $\mathcal{R}_{\tau}(g)$ is defined by
\begin{equation*}\label{Def:Reminder}
\mathcal{R}_{\tau}(g) = R^{-1/\varepsilon}\sum_{m=1}^{d-1}\max_{\mathbb{V}_{\!m},\tau_{m+1}}
\Phi_{\tau,\mathbb{V}_m,\tau_{m+1}}
\| g \|_{L^{2}}.
\end{equation*}
Although $\Phi_{\tau,\mathbb{V}_m,\tau_{m+1}}$ looks complicated, we will no longer care about its explicit form, and focus instead on its properties. These properties, one of which we prove now using the multilinear extension estimate,  hold uniformly for all hyperplanes $\mathbb{V}_m$ and caps $\tau_{m+1}$ at scale $\delta/K_{m+1}$.

\begin{lemma} \label{Lemma:MainOnPhi}
Let $0<\varepsilon<\frac{1}{4d}$ and $0<\delta\le 1$. Let
$\tau_{1}, \dots , \tau_{m} $ be
$m$-transversal caps at scale $\delta/K_{m}$ and let $\tau$ be a cap at scale $\delta$ that contains them.
Then, for all $\mathbb{V}_m\subset \R^{d}$ and $a \in \mathbb{R}^{d}$,
\begin{eqnarray*}\label{MeanPhiBoundBisUTILEINLemma}
& & 
 \fint_{a+K_{m+1}\tau'} 
 \left( \prod_{k=1}^{m} 
|T^{\mathbb{V}_{\!m}}_{\tau_{k}}g|^{\frac{1}{m}}*\zeta_{K_m\tau'}
\right)^{\frac{2m}{m-1}}    
\\  
    & \lesssim  &\label{MeanPhiBoundBisUTILEINLemmaSecTerms}
\mathfrak{c}(K_m^{-m})K_{m+1}^{\varepsilon}     
     \Bigg( \sum_{\upsilon \in V_{\tau,m}} \Big(|T_{\upsilon}g|*\zeta_{\upsilon'}\Big)^2(a)
     \Bigg)^{\frac{m}{m-1}}
+ \Big(R^{-1/\varepsilon}\| g \|_{L^{2}}\Big)^{\frac{2m}{m-1}}.\nonumber
\end{eqnarray*}
\end{lemma}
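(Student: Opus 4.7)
The plan is to rescale $\tau$ to unit scale, apply Proposition~\ref{MainBouLemma} at scale $K_{m+1}$, and then convert the resulting averaged bound into pointwise evaluation at $a$ by using the essentially constant behaviour of each $|T_\upsilon g|*\zeta_{\upsilon'}$ on translates of $\upsilon'$. First, applying the scaling map $S_{\xi_0,\delta}$ with $\xi_0$ the centre of $\tau$, we may assume $\delta=1$. In this normalisation $\tau$ is a unit cap, each $\tau_k$ is at scale $1/K_m$, each $\upsilon\in V_{\tau,m}$ is at scale $1/K_{m+1}$, the duals $\tau'$, $K_m\tau'$, $K_{m+1}\tau'$ are essentially balls of radii $1$, $K_m$, $K_{m+1}$, and $\upsilon'$ is a tube of dimensions $K_{m+1}\times\cdots\times K_{m+1}^2$ which contains $K_{m+1}\tau'$.

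Next I would remove the inner convolutions on the left-hand side. H\"older's inequality applied to the $L^1$-normalised kernel $\zeta_{K_m\tau'}$, together with the fact that $\tfrac{2m}{m-1}\geq 1$, yields the pointwise bound
\[
\bigl(|T^{\mathbb{V}_m}_{\tau_k}g|^{1/m}*\zeta_{K_m\tau'}\bigr)^{\tfrac{2m}{m-1}}\lesssim |T^{\mathbb{V}_m}_{\tau_k}g|^{\tfrac{2}{m-1}}*\zeta_{K_m\tau'}.
\]
Multiplying across $k$, integrating over $a+K_{m+1}\tau'$, and using Fubini, the rapid decay of $\zeta$ (whose exponent $\mathfrak{c}(\varepsilon)$ is chosen very large) shows that the tails of $\zeta_{K_m\tau'}$ only enlarge the outer averaging domain by a harmless constant factor, while any contribution from distances beyond $\sim K_m R^{1/(\varepsilon\mathfrak{c}(\varepsilon))}$ is absorbed into a remainder no worse than $(R^{-1/\varepsilon}\|g\|_{L^2})^{2m/(m-1)}$. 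Thus the left-hand side of the lemma is bounded, up to this remainder, by $\fint_{a+CK_{m+1}\tau'}\prod_{k=1}^m |T^{\mathbb{V}_m}_{\tau_k}g|^{2/(m-1)}$ for some absolute constant $C$.

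Now I would apply Proposition~\ref{MainBouLemma} with $K=K_{m+1}$ on a ball centred at $a$ (translation invariance of $|T_\tau g|$ follows from modulating $g$ by $e^{-ia\cdot\xi-it_0\phi(\xi)}$, which does not change $\|g\|_{L^2}$). The hypotheses match: the $\tau_k$ inside the unit cap are $m$-transversal at scale $1/K_m$ with a constant $\theta$ bounded below polynomially in $K_m$, so that $\mathfrak{c}(\theta)\lesssim \mathfrak{c}(K_m^{-m})$, and the disjoint sub-cubes of side $1/K_{m+1}$ selected inside each $\tau_k$ by the condition $\dist(Y(\xi),\mathbb{V}_m)\leq 1/K_{m+1}$ are precisely those indexed by $V_{\tau_k,m}$. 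Combined with the AM--GM inequality
\[
\prod_{k=1}^m \Big(\sum_{\upsilon\in V_{\tau_k,m}}|T_\upsilon g|^2\Big)^{1/(2m)}\leq \Big(\sum_{\upsilon\in V_{\tau,m}}|T_\upsilon g|^2\Big)^{1/2},
\]
valid because the $V_{\tau_k,m}$ are disjoint subfamilies of $V_{\tau,m}$, and Jensen's inequality $\fint\sqrt{\,\cdot\,}\leq\sqrt{\fint(\,\cdot\,)}$ to move the square root outside the average, the proposition gives the bound $\mathfrak{c}(K_m^{-m})K_{m+1}^\varepsilon\bigl(\sum_{\upsilon\in V_{\tau,m}}\fint_{a+CK_{m+1}\tau'}|T_\upsilon g|^2\bigr)^{m/(m-1)}$.

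To finish, each $T_\upsilon g$ is frequency-supported in the cap $\upsilon$ at scale $1/K_{m+1}$, so by Lemma~\ref{TechnicLemma2} and the fact that $|T_\upsilon g|*\zeta_{\upsilon'}$ is essentially constant on $\upsilon'\supset CK_{m+1}\tau'$, we get $\fint_{a+CK_{m+1}\tau'}|T_\upsilon g|^2\lesssim (|T_\upsilon g|*\zeta_{\upsilon'})^2(a)$, and summing over $\upsilon\in V_{\tau,m}$ closes the chain. I expect the main obstacle to be the delicate tail-control in the second step: one must verify that the slow polynomial tails of $\zeta_{K_m\tau'}$, combined with the potentially unbounded pointwise mass of $|T^{\mathbb{V}_m}_{\tau_k}g|^{2/(m-1)}$ at large distance from $a$, produce exactly the stated remainder and do not feed back into the main term. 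This is precisely the role of the arithmetic condition $K_m^{8m}\mathfrak{c}(K_m^{-m})\leq K_{m+1}^\varepsilon$ and of the very large decay exponent $\mathfrak{c}(\varepsilon)$ built into $\zeta$.
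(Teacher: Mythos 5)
Your proposal is correct and follows essentially the same route as the paper: rescale to $\delta=1$, use H\"older/Jensen and Fubini to pull the $\zeta_{K_m\tau'}$-convolutions outside, apply the discretised lower-dimensional multilinear estimate of Proposition~\ref{MainBouLemma} with $K=K_{m+1}$ and $\theta=K_m^{-m}$ (after absorbing the shifts $y_k$ into modulations of $g$), and then return to a pointwise value at $a$ via the reproducing formula and the essentially-constant machinery. The only cosmetic difference is bookkeeping: the paper keeps the full weight $w(y)=\prod_k\zeta_{K_m\tau'}(y_k)$ throughout and produces the remainder term only at the very last step, by applying Lemma~\ref{TechnicLemma1} (with $K=K_{m+1}$, $K'=K_m$) together with the trivial bound $\|T_{\upsilon}g\|_{\infty}\lesssim K_{m+1}^{-n/2}\|g\|_{L^2}$ and the relation $R^{1/(\varepsilon\mathfrak{c}(\varepsilon))}K_m\le K_{m+1}$, rather than truncating the tails of $\zeta_{K_m\tau'}$ early as you suggest.
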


\begin{proof} Denoting $q:=\frac{2m}{m-1}$,
by the 
trivial bound
$\|T_{\tau_{m+1}}g\|_{L^{\infty}} \leq K_{m+1}^{-n/2}\|g\|_{L^{2}}$, and the definition of  the $K_m$, this would follow from the slightly stronger estimate
\begin{align}\label{stonger}
&\quad \fint_{a+K_{m+1}\tau'} 
 \left( \prod_{k=1}^{m} 
|T^{\mathbb{V}_{\!m}}_{\tau_{k}}g|^{\frac{1}{m}}*\zeta_{K_m\tau'}
\right)^{q}    
\lesssim \\  
    &  
\mathfrak{c}(K_m^{-m})K_{m+1}^{\varepsilon}     
     \Bigg( \sum_{\upsilon \in V_{\tau,m}} \!\!\Big(|T_{\upsilon}g|*\zeta_{\upsilon'}\Big)^2(a)
     \Bigg)^{q/2}
\!\!\!\!\!\!\!+ \Big(\Big(\frac{K_m}{K_{m+1}}\Big)^{\mathfrak{c}(\varepsilon)}K_{m+1}^{n/2}\max_{\upsilon\in V_{\tau,m}}\|T_{\upsilon}g\|_{\infty}\Big)^{q}\!\!\!.\nonumber
\end{align}
By scaling as in the proof of the forthcoming Lemma~\ref{Fuman}, it will be enough to prove this with $\delta=1$, so we can replace $a+K_{m+1}\tau'$ by $B_{K_{m+1}}$ centred at $a$. 
By H\"older's inequality
and Fubini's theorem, we see that
\begin{eqnarray}\nonumber
&  & \fint_{B_{K_{m+1}}}  \left( \prod_{k=1}^{m} \int 
| T^{\mathbb{V}_{\!m}}_{\tau_{k}} g((x,t)-y_{k})|^{\frac{1}{m}} 
\zeta_{K_m\tau'}^{1/q}(y_{k})\zeta_{K_m\tau'}^{1-1/q}(y_{k}) \ dy_{k}   \right)^{q}  \ \!\!\!dxdt  \\ \nonumber
& \lesssim &  \int    \Bigg( \fint_{B_{K_{m+1}}}  
 \prod_{k=1}^{m} 
| T^{\mathbb{V}_{\!m}}_{\tau_{k}} g((x,t)-y_{k})|^{\frac{2}{m-1}} \ dxdt \Bigg)w(y)\,dy,
\end{eqnarray}
where $ \prod_{k=1}^{m}\zeta_{K_m\tau'}(y_{k})  \, dy_1\ldots dy_{m}=:w(y)dy$
Then by Proposition \ref{MainBouLemma} 
(with $K = K_{m+1}\ \text{and}\ \theta = K^{-m}_{m}$), H\"older's inequality and Fubini, this is bounded by a constant multiple of
\begin{align}\nonumber
\quad & \mathfrak{c}(K_m^{-m})K_{m+1}^\varepsilon  \int    \Bigg( 
\fint_{B_{K_{m+1}}}  
 \prod_{k=1}^{m}
\Bigg( \sum_{\substack{ \upsilon \subset \tau_{k} \\ \upsilon \in V_{\tau,m}  } } 
| T_{\upsilon} g((x,t)-y_{k})|^{2} 
\Bigg)^{\frac{1}{2m}} \!\!\!\! dxdt \Bigg)^{q}  
 w(y)\,dy\\ \nonumber
& \lesssim \ \ \mathfrak{c}(K_m^{-m})K_{m+1}^\varepsilon  \fint_{B_{K_{m+1}}}  
\int \prod_{k=1}^{m}
\Bigg( \sum_{ \substack{ \upsilon \subset \tau_{k} \\ \upsilon \in V_{\tau,m}  } } 
| T_{\upsilon} g((x,t)-y_{k})|^{2} \Bigg)^{\frac{q}{2m}}
  w(y)\,dy  dxdt.
\end{align}
By H\"older's inequality again and the reproducing formula, we can bound this as
\begin{eqnarray} \nonumber
 & \lesssim &  \mathfrak{c}(K_m^{-m})K_{m+1}^\varepsilon
 \fint_{B_{K_{m+1}}}
 \prod_{k=1}^{m}
\Bigg(
  \sum_{  \substack{ \upsilon \subset \tau_{k} \\ \upsilon \in V_{\tau,m}  }  } 
 \Big(| T_{\upsilon} g |*\zeta_{\upsilon'}\Big)^{2}  
 * 
\zeta_{K_m\tau'}(x,t) 
\Bigg)^{\frac{q}{2m}}
 \ dxdt 
 \\ \nonumber
 & \leq &  \mathfrak{c}(K_m^{-m})K_{m+1}^\varepsilon \fint_{B_{K_{m+1}}}  
\Bigg( 
  \sum_{ \upsilon \in V_{\tau,m}} 
 \Big(| T_{\upsilon} g |*\zeta_{\upsilon'}\Big)^{2}  
 * 
\zeta_{K_m\tau'}(x,t) 
\Bigg)^{q/2}
 \ dxdt. 
 \end{eqnarray}
Finally we can apply Lemma~\ref{TechnicLemma1} of  the appendix, with $K=K_{m+1}$ and $K'=K_m$, to conclude that this is bounded by a constant multiple of
$$
\mathfrak{c}(K_m^{-m})K_{m+1}^\varepsilon     
     \Bigg( \sum_{\upsilon \in V_{\tau,m}} \Big(|T_{\upsilon}g|*\zeta_{\upsilon'}\Big)^2(a) \Bigg)^{q/2}
\!\!\!\!\!\!\!+ \Big(\Big(\frac{K_m}{K_{m+1}}\Big)^{\mathfrak{c}(\varepsilon)}K_{m+1}^{n/2}\max_{\upsilon} \| T_{\upsilon}g \|_{L^{\infty}}\Big)^{q}\!\!\!.
$$
The chain of  inequalities 
yields \eqref{stonger} and hence the result.
\end{proof}

\begin{properties}\label{MeanPhiBoundBis}
It is clear that $\Phi_{\tau,\mathbb{V}_m,\tau_{m+1}}
$ is essentially constant on translates of  $K_m\tau'$.
Given that by definition
$K_m^{\frac{4m^2}{m-1}}\mathfrak{c}(K_{m}^{-m}) \leq K^\varepsilon_{m+1}$,
Lemma \ref{Lemma:MainOnPhi} yields
\begin{equation*}
 \fint_{a+K_{m+1}\tau'} \Phi^{\frac{2m}{m-1}}_{\tau,\mathbb{V}_m,\tau_{m+1}}
 \lesssim K_{m+1}^{2\varepsilon},
\end{equation*}
where $m = 2, \dots  , d-1$.
By H\"older's inequality
this also implies that
\begin{equation*}\label{MeanPhiBoundFirstBis}
\fint_{a+K_{m+1}\tau'} \Phi_{\tau,\mathbb{V}_m,\tau_{m+1}}
^{\frac{2(d-1)}{d-2}}
\lesssim K_{m+1}^{2\varepsilon},
\end{equation*}
uniformly over all $a\in \R^{d}$, $\mathbb{V}_m\subset \mathbb{R}^{d}$ and $\tau_{m+1}\subset \tau$ at scale $\delta/K_{m+1}$.
\end{properties}

We could have convolved both sides of  (\ref{FirstBouFormula}) with $\zeta_{\tau'}$, before introducing the function $\Phi_{\tau,\mathbb{V}_m,\tau_{m+1}}$. In order to then replace the double convolutions on the right-hand side by single convolutions we again use Lemma~\ref{TechnicLemma1} of  the appendix. Introducing $\Phi_{\tau,\mathbb{V}_m,\tau_{m+1}}$ after this process, we can also write
 \begin{align}\label{ThirdBouFormulaConTThirdLine} |T_{\tau} g|\ast \zeta_{\tau'} &
 \lesssim  K_{d}^{2d}\!\! \max_{\tau_{1}, \dots , \tau_{d}} 
\prod_{k=1}^{d} |T_{\tau_{k}} g|^{\frac{1}{d}} * \zeta_{\tau_k'} 
\\   \nonumber
&\quad +  \sum_{m=2}^{d-1}  \max_{\mathbb{V}_{\!m},\tau_{m+1}}
\Phi_{\tau,\mathbb{V}_m,\tau_{m+1}}
\Bigg(
\sum_{\upsilon \in V_{\tau, m}\cup\{\tau_{m+1}\}} 
\Big(
|T_{\upsilon} g| * 
\zeta_{\upsilon'}\Big)^{2} \Bigg)^{1/2}   
+ \mathcal{R}_\tau.
\end{align} 
As the terms on the right-hand side have the same form as the left-hand side at a different scale, we can iterate this inequality to obtain the following theorem. From now on we write $\tau\sim \delta/K$ if $\tau$ is a cap at scale $\delta/K$.

\begin{definition}\label{Def:PsiRec}
Define $\Psi_{\upsilon}$ recursively by
\begin{equation*}\label{EqDef:PsiRic}
\begin{array}{llll}
\Psi_{\upsilon} := 1  &   & \upsilon \sim 1,  
\\
\Psi_{\upsilon} := \Psi_{\tau}\max_{\mathbb{V}_{\!m},\tau_{m+1}}
\Phi_{\tau,\mathbb{V}_m,\tau_{m+1}}&   &
\upsilon\subset \tau,\ \ \upsilon\sim \delta /K_{m+1},\ \ \tau \sim \delta. 
\end{array}
\end{equation*}
\end{definition}

We keep track of the maximal number of caps in the following sets~$E_\delta$ as this information is used when proving linear restriction estimates. However the cardinality will have no consequence in this article - it will only be important that the caps of these sets are disjoint.

\begin{proposition}\label{THM:SelfIteratedFormua}
Let $0 < \varepsilon <\frac{1}{4d}$ and let $S=\{ (\xi,\phi(\xi))\, :\, |\xi|\le 1/2\}$. Then, for all $N \in \mathbb{N}$, 
\begin{eqnarray}\nonumber
 |T_{\!S} g|   
& \lesssim_N &  
K_{d}^{2d}\!\!\!\!\sum_{ 
K_{2}^{-N} < \delta \leq 1 }
\max_{E_{\delta}}
\Bigg( 
\sum_{ \tau \in E_{\delta}} 
  \Psi_{\tau}^{2}
\left( 
\max_{\tau_{1}, \dots ,  \tau_{d}  \subset \tau }\prod_{k=1}^{d}
|T_{\tau_{k}} g|^{\frac{1}{d}}
*\zeta_{\tau_k'} \right)^{2}   
\Bigg)^{1/2}  
\\   \nonumber
& &+
\sum_{K_{2}^{-N}K_{d}^{-1} < \delta \leq K_{2}^{-N} } 
\max_{E_{\delta}}
\left( \sum_{\tau \in E_{\delta}} 
\Psi_{\tau}^{2}
\left( |T_{\tau} g|*\zeta_{\tau'}  \right)^{2}  
\right)^{1/2}
\\ \label{Contribute}
&  &+
\sum_{K_{2}^{-N}K_{d}^{-1} < \delta \leq 1}  
\max_{E_{\delta}}
\left( \sum_{\tau \in E_{\delta}}\Psi^2_{\tau}\right)^{1/2}\!\!\!\!R^{-1/\varepsilon}\|g\|_{L^2},
\end{eqnarray}
provided $\supp g\subset \{ \xi \inÊ\R^{d-1}\, :\, |\xi|\le 1/2\}$. Here $\delta$ is restricted to taking values of  the form 
 $K_{2}^{-\gamma_{2}} \cdot ... \cdot K_{d}^{-\gamma_{d}}$ with $\gamma_{2}, \dots , \gamma_{d} \in \mathbb{N}\cup\{0\}$ and  $\tau_{1}, \dots , \tau_{d}$ 
are $d$-transversal caps at scale 
 $\delta / K_{d}$. The
sets
$E_{\delta}$ consist of  at most $4^{N}\delta^{2 - d}$ 
disjoint caps at scale $\delta$.\end{proposition}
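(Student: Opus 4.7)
The plan is to iterate the pointwise inequality \eqref{ThirdBouFormulaConTThirdLine} exactly $N$ times, starting from $\tau=S$ at scale $\delta=1$, and to sort the terms produced by the resulting recursion tree into the three sums appearing in \eqref{Contribute}. A single application to a cap $\tau$ at scale $\delta$ produces three kinds of contribution: a terminal transversal product $K_d^{2d}\max\prod_{k=1}^{d}|T_{\tau_k}g|^{1/d}*\zeta_{\tau_k'}$ at the finer scale $\delta/K_d$, which is immediately banked into the first sum of \eqref{Contribute}; for each $m=2,\ldots,d-1$, an $\ell^2$-sum of objects of the same form $|T_\upsilon g|*\zeta_{\upsilon'}$ at scale $\delta/K_{m+1}$ weighted by a factor $\max_{\mathbb{V}_{\!m},\tau_{m+1}}\Phi_{\tau,\mathbb{V}_m,\tau_{m+1}}$, which is the inductive piece to be re-expanded; and a remainder $\mathcal{R}_\tau$, which contributes to the third sum.

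The core step is to substitute \eqref{ThirdBouFormulaConTThirdLine} inside each such $\ell^2$-sum and invoke Minkowski's inequality in order to preserve the single-level $\ell^2$ form on the right-hand side. The outer factor $\max_{\mathbb{V}_{\!m},\tau_{m+1}}\Phi_{\tau,\mathbb{V}_m,\tau_{m+1}}$ carried from the parent cap is multiplied by the analogous factor coming from each child, and by Definition~\ref{Def:PsiRec} the chain product of these factors along the path from the root is exactly the recursively defined weight $\Psi_\upsilon$ of the cap $\upsilon$ reached at depth $N$ or earlier. After $N$ iterations, the terminal transversal products banked along the way populate the first sum, the still-unexpanded $|T_\tau g|*\zeta_{\tau'}$ terms surviving at the maximum depth populate the second sum at scales $\delta\in(K_2^{-N}K_d^{-1},K_2^{-N}]$, and the accumulated remainders give the third sum. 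The $(x,t)$-dependent choice of which child to descend to at each node becomes, on the outside, the maximum over disjoint families $E_\delta$ at each scale, with the bound $|E_\delta|\le 4^N\delta^{2-d}$ following from a direct count of the branching factor (at most a dimension-dependent number of children per node) combined with the volume-packing constraint that the caps be disjoint at scale $\delta$.

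The main obstacle is verifying that \eqref{ThirdBouFormulaConTThirdLine}, which was established for a cap at unit scale, applies uniformly at every shrinking scale encountered in the iteration. For this I would invoke the scaling map $S_{\xi_0,\delta}$ introduced in Section~\ref{mult} together with the remark that the multilinear extension estimates, and hence both Lemma~\ref{Lemma:MainOnPhi} and Property~\ref{MeanPhiBoundBis} underpinning \eqref{ThirdBouFormulaConTThirdLine}, retain their constants uniformly after finitely many rescalings. Since $N$ depends only on $\varepsilon$, only finitely many rescaled phases appear along the recursion, so all estimates have constants independent of the particular chain of caps. A final accounting check is that the exponent $R^{-1/\varepsilon}$ in the remainder $\mathcal{R}_\tau$ is strong enough to absorb the polynomial accumulation of $\Phi$-factors present in the corresponding $\Psi_\tau$, leaving a genuine $R^{-1/\varepsilon}\|g\|_{L^2}$ factor in the third sum of \eqref{Contribute} as required.
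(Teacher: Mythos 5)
Your proposal is correct and follows essentially the same route as the paper: the paper formalises your $N$-fold iteration as an induction on $N$, applying \eqref{ThirdBouFormulaConTThirdLine} to the linear terms surviving at the finest scale, banking the multilinear terms into the first sum, absorbing the accumulated $\Phi$-factors into $\Psi_{\upsilon}$ via Definition~\ref{Def:PsiRec}, and collecting the remainders into the third sum. The one slip is your justification of the cardinality $4^{N}\delta^{2-d}$: disjointness of caps at scale $\delta$ alone would only give $\delta^{1-d}$, and the extra power comes instead from the fact that each branching is restricted to $V_{\tau,m}\cup\{\tau_{m+1}\}$, i.e.\ to caps whose normals lie within $\delta/K_{m+1}$ of an $m$-dimensional subspace with $m\le d-1$, so each parent contributes at most $4K_{m+1}^{d-2}$ children --- though, as the paper itself remarks, the precise cardinality is never used in this article.
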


\begin{proof}
When $N=1$, there is only one term in the sum over $K_{2}^{-N} < \delta \leq 1$ and the inequality
follows from (\ref{SecondBouFormulaConTThirdLine}) at scale one.  So we proceed by induction on $N$. 

Suppose the inequality is true for $N$. Note that if it were not for the upper bound on $\delta$ in the second sum on the right-hand side, the inequality with $N+1$ would immediately follow from the $N$th version. Thus it remains to bound the part of  the sum that appears in the $N$th version that does not appear in the version with $N+1$;
$$
\sum_{K_{2}^{-(N+1)} < \delta \leq K_{2}^{-N} } 
\max_{E_{\delta}}
\left( \sum_{\tau \in E_{\delta}} 
\Psi_{\tau}^{2}
\left( |T_{\tau} g|*\zeta_{\tau'}  \right)^{2}  
\right)^{1/2}.
$$
Applying \eqref{ThirdBouFormulaConTThirdLine} to the summands,  this is bounded by a constant multiple of
\begin{eqnarray}\nonumber
\label{BeingUnrestr1}
& & \!\!\!\!\!\!\!\!\!\!
\sum_{K_{2}^{-(N+1)} < \delta \leq K_{2}^{-N} }
\max_{E_{\delta}}
\Bigg(  
\sum_{\tau \in E_{\delta}} 
\Psi_{\tau}^{2}
\left(
K_{d}^{2d}\max_{\substack{\tau_{1}, \dots , \tau_{d} \subset \tau}} 
 \prod_{k=1}^{d} 
|T_{\tau_{k}}g|^{\frac{1}{d}} 
* \zeta_{\tau_{k}'}
\right)^{2}
\Bigg)^{1/2}\\\nonumber
& + &\!\!\!\!\! \!\!\!\!\!
\sum_{K_{2}^{-(N+1)} < \delta \leq K_{2}^{-N}} 
\max_{E_{\delta}}
\Bigg(
\sum_{\tau \in E_{\delta}}  
\Psi_{\tau}^{2} 
\sum_{m=1}^{d-1}\max_{\mathbb{V}_{\!m},\tau_{m+1}}
\Phi^2_{\tau,\mathbb{V}_m,\tau_{m+1}}
\!\!\!\!\!\!\!\!\!\!\!\sum_{\upsilon \in V_{\tau,m}\cup\{\tau_{m+1}\}}\!\!\!\!\!\!\!\!\! 
\left(
 |T_{\upsilon}g|*\zeta_{\upsilon'} 
\right)^{2}
\Bigg)^{1/2}
\\ \nonumber
& + &\!\!\!\!\!\!\!\!\!\!
\sum_{K_{2}^{-(N+1)} < \delta \leq K_{2}^{-N} } 
\max_{E_{\delta}}
\left( \sum_{\tau \in E_{\delta}} 
\Psi_{\tau}^{2}
\mathcal{R}_\tau^{2}(g)  
\right)^{1/2}.
\end{eqnarray}
Here
$\tau_{1}, \dots  , \tau_{d}$ are $d$-transversal caps of  size $\delta / K_{d}$ and
$V_{\tau,m}$ is the set of  all the caps $\upsilon\subset \tau$ of  size $\delta / K_{m+1}$ 
and such that $\dist (Y(\xi), \mathbb{V}_{\!m}) \leq \delta / K_{m+1}$ for some $\xi$ in the orthogonal projection of  $\upsilon$.
 The first term is clearly acceptable and, by the definitions of  $\Psi_{\tau}$ and $\mathcal{R}_\tau$, we can bound the other two as
\begin{eqnarray}\nonumber
&  &\!\!\!\!\! \!\!\!\!\!
\sum_{K_{2}^{-(N+1)} < \delta \leq K_{2}^{-N}} 
\max_{E_{\delta}}
\Bigg(
\sum_{\tau \in E_{\delta}} \sum_{m=2}^{d-1} 
\sum_{\upsilon \in V_{\tau,m}\cup\{\tau_{m+1}\}}  \Psi_{\upsilon}^{2}
\left(
 |T_{\upsilon}g|*\zeta_{\upsilon'} 
\right)^{2}
\Bigg)^{1/2}
\\ \nonumber
& + &\!\!\!\!\!\!\!\!\!\!
\sum_{K_{2}^{-(N+1)} < \delta \leq K_{2}^{-N} } 
n\max_{E_{\delta}}
\left( \sum_{\tau \in E_{\delta}:\upsilon\subset \tau} 
\Psi_\upsilon^{2} R^{-2/\varepsilon}
\| g \|^2_{L^{2}} 
\right)^{1/2}.
\end{eqnarray}
Using the induction hypothesis again, there are at most $$4^N\delta^{2-d}4(\delta/(\delta/K_m))^{d-2}=4^{N+1}(\delta/K_m)^{2-d}$$ terms in the product $E_{\delta}\times V_{\tau,m}\cup\{\tau_{m+1}\}$, so we shift the scale and bound this  by a constant multiple of  
\begin{eqnarray}\nonumber
&  &\!\!\!\!\! \!\!\!\!\!
\sum_{K_{2}^{-(N+1)}K_{d}^{-1} <\delta \leq K_{2}^{-(N+1)}} 
\max_{E_{\delta}}
\Bigg(
\sum_{\upsilon \in E_{\delta}}  
\Psi_{\upsilon}^{2}
\left(
 |T_{\upsilon}g|*\zeta_{\upsilon'} 
\right)^{2}
\Bigg)^{1/2}
\\ \nonumber
& + &\!\!\!\!\!\!\!\!\!\!
\sum_{K_{2}^{-(N+1)}K_{d}^{-1} <\delta \leq K_{2}^{-(N+1)}} 
\max_{E_{\delta}}
\left( \sum_{\upsilon \in E_{\delta}} 
\Psi_\upsilon^{2}
\right)^{1/2}R^{-1/\varepsilon}\|g\|_{L^2}.
\end{eqnarray}
This is also acceptable and so the proof is complete.
\end{proof}

\begin{definition}\label{length}
If $\tau$ is a cap at scale $ K_{2}^{-\gamma_{2}} \cdot\cdot\cdot K_{d}^{-\gamma_{d}}$
we write $l(\tau):= \sum_{j=2}^{d} \gamma_{j}$.
\end{definition}

The functions $\Psi_\tau$ also have good essentially constant properties, that we record in the following proposition.

\begin{proposition}\label{ThePsi}
Let $0<\varepsilon< \frac{1}{4d}$. Then the functions $\Psi_{\tau}$ are essentially constant at scale one. 
Moreover, for all $a \in \mathbb{R}^{d}$,
\begin{equation*}\label{PsiProperty}
\fint_{a + \tau '} 
\Psi_{\tau}^{\frac{2(d-1)}{d-2}} (x,t) \, dxdt 
\lesssim_{l(\tau)}
| \tau' |^{\varepsilon}.
\end{equation*}
\end{proposition}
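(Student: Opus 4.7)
The first claim follows from the recursive definition. By Property~\ref{MeanPhiBoundBis}, each $\Phi_{\upsilon,\mathbb{V}_m,\tau_{m+1}}$ appearing in the definition of $\Psi_\tau$ is essentially constant on translates of $K_m\upsilon'$, and since the maximum of non-negative essentially constant functions inherits the property, so is each factor $\max_{\mathbb{V}_m,\tau_{m+1}}\Phi_{\upsilon,\mathbb{V}_m,\tau_{m+1}}$. As every intermediate cap satisfies $\upsilon\sim\delta_i\le 1$ and $K_m\ge K_2>1$, the sets $K_m\upsilon'$ contain unit balls, so each factor is essentially constant on unit balls, and so is the product $\Psi_\tau$ of the $l(\tau)$ such factors.

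For the averaged bound I would induct on $l(\tau)$; the base case $l(\tau)=0$ gives $\Psi_\tau\equiv 1$ trivially. For the step, write $\tau\subset\upsilon$ with $l(\upsilon)=l(\tau)-1$, $\upsilon\sim\delta'$ and $\tau\sim\delta'/K_{m+1}$, so that $\Psi_\tau=\Psi_\upsilon\cdot F$ with $F:=\max_{\mathbb{V}_m,\tau_{m+1}}\Phi_{\upsilon,\mathbb{V}_m,\tau_{m+1}}$ and $q:=2(d-1)/(d-2)$. The crucial auxiliary estimate is
\begin{equation*}
\fint_{b+K_{m+1}\upsilon'}F^q\lesssim K_{m+1}^{2\varepsilon}\qquad\text{uniformly in }b.
\end{equation*}
Here Remark~\ref{RemOnV} is used to fix a single subspace $\mathbb{V}_m=\mathbb{V}_m^\ast(b)$ on each translate of $K_{m+1}\upsilon'$, eliminating that maximum. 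For the maximum over $\tau_{m+1}$, note that the term $|T_{\tau_{m+1}}g|\ast\zeta_{\tau_{m+1}'}$ from the numerator of $\Phi$ is also one of the squared terms in the denominator through $V_{\upsilon,m}\cup\{\tau_{m+1}\}$, so its ratio to the denominator is bounded by $1$. Writing $D_0$ for the denominator with the $\tau_{m+1}$ term omitted, it follows that $\Phi\le 1+N_1(\mathbb{V}_m^\ast)/D_0(\mathbb{V}_m^\ast)$ with a right-hand side independent of $\tau_{m+1}$. Choosing any fixed $\tau_{m+1}^\ast\in V_{\upsilon,m}(\mathbb{V}_m^\ast)$ (the case $V_{\upsilon,m}=\emptyset$ gives $N_1=0$ and $F\le 1$ directly) makes $V\cup\{\tau_{m+1}^\ast\}=V$ and hence $D(\tau_{m+1}^\ast)=D_0$, so Property~\ref{MeanPhiBoundBis} applied to the single $\Phi_{\upsilon,\mathbb{V}_m^\ast,\tau_{m+1}^\ast}$ yields the claim after absorbing the additive~$1$.

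To close the induction, pave $\tau'$ with the $K_{m+1}$ disjoint translates $A_1,\ldots,A_{K_{m+1}}$ of $K_{m+1}\upsilon'$ along the long axis, using $|\tau'|=K_{m+1}^{d+1}|\upsilon'|$ and $|K_{m+1}\upsilon'|=K_{m+1}^d|\upsilon'|$. On each $A_j$, $F\sim F_j$ is essentially constant, and covering $A_j$ by $K_{m+1}^d$ translates of $\upsilon'$ and applying the inductive hypothesis on each gives $\fint_{A_j}\Psi_\upsilon^q\lesssim|\upsilon'|^\varepsilon$. Summing over $j$, using $\sum_j F_j^q\sim K_{m+1}\fint_{\tau'}F^q\lesssim K_{m+1}^{1+2\varepsilon}$ together with $|A_j|/|\tau'|=1/K_{m+1}$, one obtains
\begin{equation*}
\fint_{a+\tau'}\Psi_\tau^q\sim\frac{1}{|\tau'|}\sum_j F_j^q|A_j|\fint_{A_j}\Psi_\upsilon^q\lesssim|\upsilon'|^\varepsilon K_{m+1}^{2\varepsilon}\le|\tau'|^\varepsilon,
\end{equation*}
where the last inequality uses $|\tau'|=K_{m+1}^{d+1}|\upsilon'|$ and $d+1\ge 2$.

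The main obstacle is controlling the pointwise maxima over $\mathbb{V}_m$ and $\tau_{m+1}$ without paying polynomial-in-$K_{m+1}$ losses that would overwhelm the target $|\tau'|^\varepsilon$: a naive union bound over the $K_{m+1}^{O(d)}$ discretised choices is much too lossy. The combined use of Remark~\ref{RemOnV} (fixing $\mathbb{V}_m$ at the correct scale) and the observation that the $\tau_{m+1}$ contribution to $\Phi$ is already dominated by the denominator are what reduce $F$ on each translate to a single $\Phi$, preserving the sharp $K_{m+1}^{2\varepsilon}$ exponent inherited from Property~\ref{MeanPhiBoundBis}.
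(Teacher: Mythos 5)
Your overall strategy --- induction on $l(\tau)$, removal of the maximum over $\mathbb{V}_m$ via Remark~\ref{RemOnV}, and an averaged bound for $\max_{\tau_{m+1}}\Phi$ at scale $K_{m+1}$ fed back through the inductive hypothesis --- is the same as the paper's. Your pointwise domination $\max_{\tau_{m+1}}\Phi \le 1 + \Phi_{\cdot,\mathbb{V}_m^\ast,\tau_{m+1}^\ast}$, using that the $\tau_{m+1}$ term of the numerator reappears squared in the denominator and that $D(\tau_{m+1}^\ast)=D_0$ for any $\tau_{m+1}^\ast\in V$, is in fact a cleaner justification than the paper's ``we can suppose that the maximum is attained on the same $\tau_{m+1}$''.

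However, the closing computation has a genuine gap: you decouple $\int_{A_j}\Psi_\upsilon^qF^q\sim F_j^q\int_{A_j}\Psi_\upsilon^q$ by asserting that $F$ is essentially constant on the translates $A_j$ of $K_{m+1}\upsilon'$. This is false in general. Property~\ref{MeanPhiBoundBis} only gives essential constancy of $\Phi$ on translates of $K_m\upsilon'$ (in your labelling, where $\upsilon$ is the parent cap): the multilinear numerator $K_m^{2m}\max\prod_k|T^{\mathbb{V}_m}_{\tau_k}g|^{1/m}\ast\zeta_{K_m\upsilon'}$ is built from caps at scale $\delta'/K_m$ and mollified only by $\zeta_{K_m\upsilon'}$, and since $K_{m+1}$ is enormously larger than $K_m$ (recall $K_m^{8m}\mathfrak{c}(K_m^{-m})\le K_{m+1}^{\varepsilon}$), $F$ can vary substantially across a single $A_j$. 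Without essential constancy at that scale you cannot exclude that $F$ concentrates precisely where $\Psi_\upsilon$ is large, so the product of the two averages over $A_j$ does not control the average of the product, and the asserted $\sim$ fails. The repair is to reverse the order of the two coverings, as the paper does: first tile $a+\tau'$ by translates $T_\ell$ of $\upsilon'$ (the dual of the parent cap), on which $F$ genuinely is essentially constant because $\upsilon'\subset K_m\upsilon'$; pull out $F^q(x_\ell,t_\ell)$, apply the inductive hypothesis to $\fint_{T_\ell}\Psi_\upsilon^q$, and reassemble $\sum_\ell F^q(x_\ell,t_\ell)\,|T_\ell|\lesssim\int_{\bigcup_\ell T_\ell}F^q$; only then cover $\bigcup_\ell T_\ell$ by translates of $K_{m+1}\upsilon'$ (contained in $a+4\tau'$ by the small-angle estimate) and invoke your auxiliary estimate. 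With that reordering your argument closes and yields $|\upsilon'|^{\varepsilon}K_{m+1}^{2\varepsilon}\le|\tau'|^{\varepsilon}$ exactly as you computed.
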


\begin{proof}
The essentially constant property is an immediate consequence of the definition and the corresponding property for $\Phi_{\upsilon}$ with $\upsilon\subset \tau$,  so it remains to prove the averaged property.

If $\tau \sim 1$, then $\Psi_{\tau} = 1$ and the estimate is trivially
satisfied. 
If $\upsilon \sim 1/K_{m+1}$, then $\Psi_{\upsilon} = \max_{\mathbb{V}_{\!m},\tau_{m+1}}
\Phi_{\tau,\mathbb{V}_m,\tau_{m+1}}$ where $\upsilon\subset \tau\sim 1$ and we can cover
$a + \upsilon'$ with a family of translates of $K_{m+1}\tau'$ which are essentially balls $B_{j}$
of diameter $K_{m+1}$. We can of course do this in such a way that 
\begin{equation*}\label{AngleConseq} 
\bigcup_{j} B_{j}\subset a +4\upsilon'.
\end{equation*} 
Then we have
\begin{eqnarray*}
\int_{a + \upsilon '} \Psi^{q}_{\upsilon} & \leq &
\sum_{j} \int_{B_{j}} 
\max_{\mathbb{V}_{\!m},\tau_{m+1}}
\Phi^q_{\tau,\mathbb{V}_m,\tau_{m+1}}  
\\ \nonumber 
& \lesssim  & 
\sum_{j} |B_{j}|\fint_{B_{j}}\max_{\mathbb{V}_{\!m},\tau_{m+1}}
\Phi^q_{\tau,\mathbb{V}_m,\tau_{m+1}}  \end{eqnarray*}
Recalling Remark~\ref{RemOnV}, in fact we have the same $\mathbb{V}_{\!m}$ for all $(x,t)\in B_{j}$. Similarly as $\Phi_{\tau,\mathbb{V}_m,\tau_{m+1}}  $ is essentially constant on $B_{j}$
we can suppose that the maximum is attained on the same $\tau_{m+1}$ for a given $B_{j}$. Thus, taking $q=\frac{2(d-1)}{d-2}$, 
by Property \ref{MeanPhiBoundBis} we obtain
\begin{equation*}\label{Psi:R0LargeEnough}
\int_{a + \upsilon '} \Psi^{q}_{\upsilon}\lesssim  \sum_{B_{j}} |B_{j}|K_{m+1}^{2\varepsilon}
\lesssim  |\upsilon' |K_{m+1}^{2\varepsilon}\le|\upsilon' |^{1+\varepsilon}
\end{equation*}
as claimed.

 We have proved the proposition for $\tau$ such that $l(\tau)=0$ or $1$. Thus we can proceed by induction on this quantity. Supposing that we have the estimate for $\tau$ such that $l(\tau)=N$, it will suffice to prove the estimate for $\upsilon$ such that $l(\upsilon)=N+1$.
That is we suppose that
\begin{equation}\label{InductHP}
\fint_{a + \tau'} \Psi_{\tau}^{q}  
\lesssim
| \tau' |^{\varepsilon}, 
\qquad a \in \mathbb{R}^{d},
\end{equation}
and attempt to prove the same for
$\upsilon$ at scale $\delta/K_{m+1}$ such that $\upsilon \subset \tau $ at scale $\delta$. We cover $a + \upsilon'$
with a family $\left\{ T_{\ell}\right \}$
of pairwise disjoint translates of~$\tau'$ with centres at $(x_\ell,t_\ell)$. We can do this in such a way that \begin{equation}\nonumber
\bigcup_{\ell} T_{\ell} \subset a + 2\upsilon'.
\end{equation} 
As $\Phi^q_{\tau,\mathbb{V}_m,\tau_{m+1}}  $ is essentially constant on $T_{\ell}$, we have
\begin{eqnarray*}\label{PsiPropProof1}
\int_{a + \upsilon '} \Psi^{q}_{\upsilon} & \leq &
\sum_{\ell} \int_{T_{\ell}} \Psi^{q}_{\tau} 
\max_{\mathbb{V}_{\!m},\tau_{m+1}}
\Phi^q_{\tau,\mathbb{V}_m,\tau_{m+1}}    
\\ \nonumber 
& \lesssim  & 
\sum_{\ell} \max_{\mathbb{V}_{\!m},\tau_{m+1}}
\Phi^q_{\tau,\mathbb{V}_m,\tau_{m+1}}  (x_{\ell}, t_{\ell}) 
|T_{\ell}|
\fint_{T_{\ell}} \Psi^{q}_{\tau}.
\end{eqnarray*}
Then, by  the induction hypothesis (\ref{InductHP}), we see that 
\begin{eqnarray*}
\int_{a + \upsilon '} \Psi^{q}_{\upsilon} & \leq &
|\tau '|^{\varepsilon} \sum_{\ell} 
\max_{\mathbb{V}_{\!m},\tau_{m+1}}
\Phi^q_{\tau,\mathbb{V}_m,\tau_{m+1}}  (x_{\ell}, t_{\ell}) 
|T_{\ell}|  
 \\ \nonumber
& \lesssim & 
|\tau '|^{\varepsilon}
\int_{\cup_{\ell} T_{\ell}} \max_{\mathbb{V}_{\!m},\tau_{m+1}}
\Phi^q_{\tau,\mathbb{V}_m,\tau_{m+1}}.
 \end{eqnarray*}
We are now in a similar position as in the case $l(\tau)=1$. We cover $\bigcup_{\ell} T_{\ell}$
with a family $\left\{ T_{j}\right \}$
of disjoint translates of $K_{m+1} \tau'$. 
As the angle between $Y_{\upsilon}$ and $Y_{\tau}$ is bounded by $\delta$, elementary trigonometry tells us that we can do this so that  
\begin{equation*}
\bigcup_{j} T_{j}\subset a +4\upsilon'.
\end{equation*} 
Thus, by Remark~\ref{RemOnV} and Property \ref{MeanPhiBoundBis},
\begin{eqnarray*}\label{PsiPropProof3}
\int_{a + \upsilon '} \Psi^{q}_{\upsilon} & \leq & 
|\tau ' |^{\varepsilon} 
\sum_{j} |T_{j}| \max_{\mathbb{V}_{\!m},\tau_{m+1}}\fint_{T_{j}} 
\Phi^q_{\tau,\mathbb{V}_m,\tau_{m+1}} 
\\ \nonumber
& \lesssim &
|\tau ' |^{\varepsilon} K_{m+1}^{2\varepsilon} 
\sum_{j} |T_{j}| 
 \lesssim 
 |\tau '|^{\varepsilon} K_{m+1}^{2\varepsilon}
 |\upsilon ' |\le |\upsilon ' |^{1+\varepsilon}
\end{eqnarray*}
where in the final inequality we used that  $|\upsilon '|^{1+ \varepsilon}=|\tau '|^{\varepsilon} K_{m+1}^{(d+1)\varepsilon}
 |\upsilon ' |$, and so the proof is complete.
\end{proof}

Returning to the decomposition \eqref{Contribute}, we
stop the iteration at the biggest value of $N$ such that 
$ K_{2}^{N} K_{d}<R^\lambda$, where $\lambda> 0$, so that
\begin{eqnarray}\nonumber
 |T_{\!S} g|   
& \lesssim &  
R^{\varepsilon}\!\!\!\!\sum_{ 
R^{-\lambda} < \delta \leq 1 }
\max_{E_{\delta}}
\Bigg( 
\sum_{ \tau \in E_{\delta}} 
  \Psi_{\tau}^{2}
\left( 
\max_{\tau_{1}, \dots ,  \tau_{d}  \subset \tau }\prod_{k=1}^{d}
|T_{\tau_{k}} g|^{\frac{1}{d}}
*\zeta_{\tau_k'} \right)^{2}   
\Bigg)^{1/2}  
\\   \nonumber
& &+
\sum_{R^{-\lambda}  < \delta \leq R^{-\lambda+\varepsilon} } 
\max_{E_{\delta}}
\left( \sum_{\tau \in E_{\delta}} 
\Psi_{\tau}^{2}
\left( |T_{\tau} g|*\zeta_{\tau'}  \right)^{2}  
\right)^{1/2}
\\ \label{FinalSelfIteratedFormula}
&  &+
\sum_{R^{-\lambda} < \delta \leq 1}  \max_{E_{\delta}}
\left( \sum_{\tau \in E_{\delta}}\Psi^2_{\tau}\right)^{1/2}R^{-1/\varepsilon}\|g\|_{L^2}.
\end{eqnarray}
This is what we call the Bourgain--Guth decomposition \cite[pp. 1259]{BG}. Note that as
$
|\tau' | <R^{(d+1)\lambda},
$
 we have 
\begin{equation}\label{PsiPropertyFINAL}
\fint_{a + \tau '} 
\Psi_{\tau}^{\frac{2(d-1)}{d-2}} 
\lesssim
R^{(d+1)\lambda\varepsilon},
\quad
 a \in \mathbb{R}^{d},
\end{equation}
 
Later we will dispose of the sets $E_\delta$ and take the inner sums in $\tau$ over the full partition of $S$. The outer sum (over the scales at which the partition is taken) has less than $\lambda\mathfrak{c}(\varepsilon)$ terms in it,  where $\mathfrak{c}$ is the constant from the Bennett--Carbery--Tao extension estimate. The inequality recalls the way in which the Whitney decomposition can be used to take advantage of bilinear estimates, stopping at a scale for which easy estimates are available.  The big difference between this and the Whitney decomposition are the functions $\Psi_\tau$, which have reasonably nice properties, but will prove to be  something of a hindrance. Indeed, the easy estimates for the linear terms are no longer so good that we can ignore them completely. Our final bounds are obtained by compromising between the scale $\lambda$  that is good for the multilinear term and that which is good for the linear term.

\section{Proof of Theorem~\ref{us}}\label{fourierdecay}
Recall that by duality, the desired estimate \eqref{dz} is equivalent to 
\begin{equation*}
\|(fd \sigma)^\vee (R \, \cdot \,) \|_{L^{1}(d \mu)}
\lesssim 
R^{-\beta/2} \sqrt{c_{\alpha}(\mu) \| \mu \|} \| f\|_{L^{2}(\mathbb{S}^{d-1})}.
\end{equation*}
Thus, by H\"older's inequality, it will suffice to prove
\begin{equation}\label{IV}
\|(f d \sigma)^\vee (R \, \cdot \,) \|_{L^{2}(d \mu)}
\lesssim 
R^{- \beta/2} \sqrt{c_{\alpha}(\mu)} \| f \|_{L^{2}(\mathbb{S}^{d-1})}
\end{equation}
with
\begin{equation*}
\beta
>\alpha - 1
+  
 \frac{(d-\alpha)^2}{(d-1)(2d-\alpha-1)}.
\end{equation*}
Defining the measure $\mu_{R}$ by 
$$
\int \psi (x)\, d\mu_R(x)= \int \psi(x)\, R^\alpha d\mu(x/R)=R^\alpha\int \psi(Rx)\, d\mu(x),
$$
 it is clear that 
$c_{\alpha}(\mu_{R}) = c_{\alpha}(\mu)$,
so that \eqref{IV} is equivalent to
\begin{equation}\label{V}
\|(f d \sigma)^\vee\|_{L^{2}(d \mu_{R})}
\lesssim 
R^{\frac{\alpha-\beta}{2} } \sqrt{c_{\alpha}(\mu)} \| f\|_{L^{2}(\mathbb{S}^{d-1})}.
\end{equation}

By a finite splitting, the triangle inequality and  the rotational invariance of the inequality (which holds uniformly for all $\alpha$-dimensional measures $\mu_R$) we can suppose that $\sigma$ is supported on 
$S=\{ (\xi,\phi(\xi))\, :\, |\xi|\le 1/2\},$
where $\phi(\xi)=\sqrt{1-|\xi|^2}$. Defining 
$$g(\xi):=\frac{1}{(2\pi)^{d/2}}\frac{f(\xi,\phi(\xi))}{\sqrt{1-|\xi|^2}},$$ we can write
$$
(f d \sigma)^\vee(x,t)=\int_{|\xi|\le 1/2}g(\xi)\, e^{ix\cdot\xi+it\phi(\xi)}d\xi,
$$
so we see that \eqref{V} is equivalent to
\begin{equation}\label{VI}
\| T_{\!S} g \|_{L^{2}(d \mu_{R})}
\lesssim 
R^{\frac{\alpha-\beta}{2}} \sqrt{c_{\alpha}(\mu)} \| g\|_{L^{2}(\mathbb{R}^{d-1})}.
\end{equation}
For this we will use the Bourgain--Guth decomposition with $\lambda=\frac{d-\alpha}{2d-\alpha-1}$;
\begin{eqnarray}\nonumber
 | T_{\!S}g |    
& \lesssim &  
R^{\varepsilon}\!\!\!\!\sum_{ 
R^{-\lambda} \le \delta \leq 1 }
\Bigg( 
\sum_{ \tau \sim \delta} 
\Big( \Psi_{\tau}\!\!\!\!
\max_{\tau_{1}, \dots ,  \tau_{d}  \subset \tau }\prod_{k=1}^{d}
|T_{\tau_{k}}g|^{\frac{1}{d}}
*\zeta_{\tau_k'} \Big)^{2}   
\Bigg)^{1/2}  
\\   \label{schroform0}
& &+
\sum_{R^{-\lambda}  \le \delta \leq R^{-\lambda+\varepsilon} } 
\left( \sum_{\tau \sim \delta} 
\left(\Psi_{\tau} |T_{\tau}g|*\zeta_{\tau'}  \right)^{2}  
\right)^{1/2}
\\ \nonumber
&  &+
\sum_{R^{-\lambda} \le \delta \leq 1}  
\left( \sum_{\tau \sim \delta}\Psi^2_{\tau}\right)^{1/2}R^{-1/\varepsilon}\|g\|_{L^2(\mathbb{R}^{d-1})},
\end{eqnarray}
which follows from estimate \eqref{FinalSelfIteratedFormula} by summing in $\tau$ over the full partition of $S$ at scale $\delta$ instead of over the restricted subsets $E_\delta$. 

Recalling that there are less that $\lambda \mathfrak{c}(\varepsilon)<\mathfrak{c}(\varepsilon)$ terms in each of the $\delta$-sums, by the triangle inequality, we need only prove estimates which are uniform in $\delta$. Writing $g_\tau:=g\chi_\tau$, if we could prove
\begin{equation}\label{first0}
\Big\| \Psi_{\tau}| T_{\tau}g |*\zeta_{\tau'}  \Big\|_{L^{2}(d \mu_{R})} \lesssim \sqrt{c_{\alpha}(\mu)}\,
R^{\frac{\alpha}{2} - \frac{\alpha - 1}{2}
- 
 \frac{\lambda(d-\alpha)}{2(d-1)}+d\varepsilon}  \| g_\tau \|_{2},
\end{equation} 
uniformly for $\tau$ at scale $\delta$ with $R^{-\lambda}  \le \delta \leq R^{-\lambda+\varepsilon}$, then  using orthogonality, we could bound the middle term on the right-hand side of \eqref{schroform0}. Similarly, replacing the $\max_{\tau_{1}, \dots ,  \tau_{d}  \subset \tau }$ with an $\ell^2$-norm, and using the fact that there are no more than $R^\varepsilon$ choices in such a sum, in order to treat the first term it will suffice to prove
\begin{equation}\label{second0}
\Big\| \Psi_{\tau}\prod_{k=1}^{d}
|T_{\tau_{k}}g|^{\frac{1}{d}}
*\zeta_{\tau_k'} \Big\|_{L^{2}(d \mu_{R})} \lesssim 
\sqrt{c_{\alpha}(\mu)}\,
R^{\frac{\alpha}{2} - \frac{\alpha - 1}{2}
- 
 \frac{\lambda(d-\alpha)}{2(d-1)}+d\varepsilon}  \| g_\tau \|_{2},
\end{equation} 
uniformly for $\tau$ at scale $\delta$ with $R^{-\lambda}  \le \delta \leq 1$ and uniformly for choices of transversal caps 
$\tau_1,\ldots,\tau_{d}\subset \tau$. In fact we will only prove this for $\alpha> 1$ 
however we can safely ignore the other cases as Mattila  already proved the sharp bound for $\beta_d$ in low dimensions \cite{M0}. Finally, in order to deal with the remainder term, by taking $\varepsilon$ sufficiently small, it will suffice to prove that
\begin{equation}\label{thirds0}
\|\Psi_{\tau}\|_{L^{2}( d \mu_{R})} \lesssim \sqrt{c_{\alpha}(\mu)}R^{d/2+\lambda},
\end{equation}
uniformly for $\tau$ at scale $\delta$ with $R^{-\lambda}  \le \delta \leq 1$. Taking for granted the proofs of  (\ref{first0}), (\ref{second0}) and (\ref{thirds0}), which we will present in the forthcoming lemmas, starting with the easier \eqref{thirds0}, this completes the proof of Theorem~\ref{us}.

\begin{lemma}\label{third0} Let $0<\varepsilon<\frac{1}{4d}$. Then, for all  caps $\tau\sim\delta$ with $R^{-\lambda} \le \delta \leq 1$,
\begin{equation*}\label{MainLemmaFormulaPreqPreq}
\| \Psi_{\tau} \|_{L^{2}( d \mu_{R})} \lesssim \sqrt{c_{\alpha}(\mu)}R^{d/2+\lambda}.
\end{equation*}
\end{lemma}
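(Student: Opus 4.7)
The plan is to transfer $\|\Psi_\tau\|_{L^2(d\mu_R)}^2$ to an integral of $\Psi_\tau^2$ against Lebesgue measure on $B(0,R+O(1))$, at the cost of one factor of $c_\alpha(\mu)$, and then control the latter by covering with translates of the dual box $\tau'$ and applying the averaged $L^{2(d-1)/(d-2)}$-estimate furnished by Proposition~\ref{ThePsi}.

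For the transfer step, I would cover $\supp(\mu_R)\subset B(0,R+1)$ by a bounded-overlap family of unit balls $\{B(z_j,1)\}$. Since Proposition~\ref{ThePsi} asserts that $\Psi_\tau$ is essentially constant at scale one, we have $\Psi_\tau^2(y)\simeq \Psi_\tau^2(z_j)$ for $y\in B(z_j,1)$ and also $\Psi_\tau^2(z_j)\lesssim \fint_{B(z_j,1)}\Psi_\tau^2$. Using $\mu_R(B(z_j,1))\le c_\alpha(\mu_R)=c_\alpha(\mu)$ on the first bound, and the finite overlap of the balls on the second, this yields
\begin{equation*}
\int \Psi_\tau^2\,d\mu_R \;\lesssim\; c_\alpha(\mu)\int_{B(0,R+2)}\Psi_\tau^2\,dxdt.
\end{equation*}

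Next, I would cover $B(0,R+2)$ with bounded overlap by translates $\{a_\ell+\tau'\}$ of the dual box, whose volume is $\delta^{-(d+1)}$ and whose sides have lengths $\delta^{-1}$ (in $d-1$ directions) and $\delta^{-2}$. Combining H\"older's inequality with Proposition~\ref{ThePsi} and the bound $|\tau'|\le R^{(d+1)\lambda}$ gives
\begin{equation*}
\fint_{a_\ell+\tau'}\Psi_\tau^2 \;\le\; \Big(\fint_{a_\ell+\tau'}\Psi_\tau^{\frac{2(d-1)}{d-2}}\Big)^{\!\frac{d-2}{d-1}} \;\lesssim\; R^{C\varepsilon}
\end{equation*}
for some dimensional constant $C$. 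Multiplying by $|\tau'|$ and summing over the $\lesssim \max(R\delta,1)^{d-1}\max(R\delta^2,1)$ translates reduces matters to the elementary inequality $\max(R\delta,1)^{d-1}\max(R\delta^2,1)\cdot\delta^{-(d+1)}\lesssim R^{d+2\lambda}$ throughout $R^{-\lambda}\le \delta\le 1$.

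I would then verify the three cases: (i) $R\delta^2\ge 1$, where the expression equals $R^d$; (ii) $R\delta\ge 1>R\delta^2$, where it equals $R^{d-1}\delta^{-2}\le R^{d-1+2\lambda}$; and (iii) $R\delta<1$, where it reduces to $\delta^{-(d+1)}\le R^{(d+1)\lambda}$. All three are bounded by $R^{d+2\lambda}$ provided $(d-1)\lambda\le d$, which follows from $\lambda<1$ (a direct check from $\lambda=(d-\alpha)/(2d-\alpha-1)$ for $d\ge 3$). Taking $\varepsilon$ sufficiently small absorbs the factor $R^{C\varepsilon}$ into slack, completing the proof. The main bookkeeping hurdle is case (iii), where $\tau'$ is larger than the support of $\mu_R$ in every direction, so that one cannot partition $B(0,R+2)$ into many translates of $\tau'$; here the full $R^{\lambda}$ growth allowance in the lemma is genuinely needed.
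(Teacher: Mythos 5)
Your proof is correct and takes essentially the same route as the paper's: both exploit the essential constancy of $\Psi_\tau$ at scale one to trade $d\mu_R$ for Lebesgue measure at the cost of a factor of $c_\alpha(\mu)$, then cover by translates of $\tau'$ and invoke the averaged $L^{2(d-1)/(d-2)}$ bound of Proposition~\ref{ThePsi}. The only (harmless) difference is bookkeeping: the paper applies H\"older with respect to $\mu_R$ first, using $\mu_R(B_R)\lesssim c_\alpha(\mu)R^{\alpha}$ and $\alpha\le d$ to land on $R^{d/2}$ with the full $R^{\lambda}$ left over to absorb the $|\tau'|^{\varepsilon}$ loss, whereas you stay in $L^2$ and perform the H\"older locally on each dual box, arriving at $R^{(d+2\lambda)/2}$ — and since the loss is in fact $\delta^{-c\varepsilon}\le R^{c\lambda\varepsilon}$, it is absorbed for every admissible $\varepsilon<\tfrac{1}{4d}$, not merely for $\varepsilon$ small.
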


\begin{proof} Writing $q = \frac{2(d-1)}{d-2}$, we prepare to use the property (\ref{PsiPropertyFINAL}).
 First of all, as $\Psi_{\tau}$ is essentially constant at scale one, we know that
\begin{eqnarray*}
 \int_{B_j} |\Psi_{\tau}(x)|^q d\mu_R(x)&\le& \mu_R(B_j)\sup_{x\in B_j} |\Psi_{\tau}(x)|^q\\
 &\le& c_{\alpha}(\mu_R)\sup_{x\in B_j} |\Psi_{\tau}(x)|^q \ \lesssim \  c_{\alpha}(\mu_R)\int_{B_j} |\Psi_\tau(x)|^qd x,
\end{eqnarray*}
whenever $B_j$ is a ball of diameter less than one. 
Thus, we can bound
\begin{eqnarray*}
 \| \Psi_{\tau} \|_{L^{2}( d \mu_{R})}
&\lesssim&
\mu_R(B_R)^{\frac{1}{2}-\frac{1}{q}}\| \Psi_{\tau} \|_{L^{q}(d \mu_{R})}\\
&\lesssim &c_{\alpha}(\mu_R)^{\frac{1}{2}-\frac{1}{q}}R^{\alpha(\frac{1}{2}-\frac{1}{q})}c_{\alpha}(\mu_R)^{\frac{1}{q}}\| \Psi_{\tau} \|_{L^{q}( B_{R})}
\\ 
& =&
\sqrt{c_{\alpha}(\mu)}R^{\alpha(\frac{1}{2}-\frac{1}{q})}
\| \Psi_{\tau} \|_{L^{q}(B_{R})}.
\end{eqnarray*}
Covering $B_{R}$ with a family $\{ T_{j} \}$ of  
 translates of $\tau'$ with disjoint interiors, cuboids of dimension 
$\delta^{-1} \times \dots \times \delta^{-1} \times \delta^{-2}$, we can then bound this as
\begin{eqnarray*}
\| \Psi_{\tau} \|_{L^{2}( d \mu_{R})}& \lesssim&
\sqrt{c_{\alpha}(\mu)}R^{\alpha(\frac{1}{2}-\frac{1}{q})}\Big(\sum_{j}
\| \Psi_{\tau} \|^q_{L^{q}(T_{j})}\Big)^{1/q}
\\ \nonumber
& \lesssim &
\sqrt{c_{\alpha}(\mu)}R^{\alpha(\frac{1}{2}-\frac{1}{q})}\Big(\sum_{j}
|T_{j}| |\tau '|^{\varepsilon}\Big)^{1/q}
\\ \nonumber
& \lesssim&
\sqrt{c_{\alpha}(\mu)}R^{\frac{d}{2}}\delta^{-\frac{(d+1)\varepsilon}{q}},
\end{eqnarray*}
where the second inequality is by Proposition~\ref{ThePsi}. For the range of $\delta$ under consideration, this is easily enough to give the stated bound.
\end{proof}

\begin{lemma}Let $0<\varepsilon<\frac{1}{4d}$. Then,
for all caps $\tau\sim\delta$ with $R^{-\lambda}  \le \delta \leq R^{-\lambda+\varepsilon}$,\begin{equation}\label{MainLemmaFormulaPreq}
\big\|\Psi_{\tau}| T_{\tau}g |*\zeta_{\tau'} \big\|_{L^{2}( d \mu_{R})} \lesssim \sqrt{c_{\alpha}(\mu)}R^{\frac{1}{2} - \frac{\lambda(d-\alpha)}{2(d-1)} + d\varepsilon}  \| g_\tau \|_{2}.
\end{equation} 
\end{lemma}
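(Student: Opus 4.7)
The plan is to exploit the smoothing at scale $\tau'$ of $F:=|T_\tau g|\ast\zeta_{\tau'}$. I partition the support $B_R$ of $\mu_R$ into a family $\{T_j\}$ of (essentially) disjoint translates of $\tau'$, whose dimensions are $\delta^{-1}\times\cdots\times\delta^{-1}\times\delta^{-2}$. Since $\zeta_{\tau'}$ is an approximate identity at this scale, $F$ is essentially constant on each $T_j$ with value $F_j$, and therefore
\begin{equation*}
\|\Psi_\tau F\|_{L^2(d\mu_R)}^2 \lesssim \sum_j F_j^2 \int_{T_j}\Psi_\tau^2\,d\mu_R.
\end{equation*}
To bound the $\Psi_\tau$-integral I apply H\"older with the natural exponent $q=2(d-1)/(d-2)$ from Proposition~\ref{ThePsi},
\begin{equation*}
\int_{T_j}\Psi_\tau^2\,d\mu_R \le \Bigl(\int_{T_j}\Psi_\tau^q\,d\mu_R\Bigr)^{\!(d-2)/(d-1)}\mu_R(T_j)^{1/(d-1)}.
\end{equation*}

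The two factors are handled separately. For the first, the essential constancy of $\Psi_\tau$ at unit scale is combined with the $\alpha$-dimensional condition exactly as in the proof of Lemma~\ref{third0} to transfer from $\mu_R$ to Lebesgue: $\int_{T_j}\Psi_\tau^q\,d\mu_R\lesssim c_\alpha(\mu)\int_{T_j}\Psi_\tau^q\,dx$, and Proposition~\ref{ThePsi} then gives $\int_{T_j}\Psi_\tau^q\,dx\lesssim |T_j||\tau'|^\varepsilon=|\tau'|^{1+\varepsilon}$. For $\mu_R(T_j)$, rather than enclosing $T_j$ in one ball of radius $\delta^{-2}$ (which would give only $c_\alpha(\mu)\delta^{-2\alpha}$), I cover $T_j$ by $\delta^{-1}$ balls of radius $\delta^{-1}$; using $\alpha>1$ this yields the sharper $\mu_R(T_j)\lesssim c_\alpha(\mu)\delta^{-1-\alpha}$. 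Combining gives
\begin{equation*}
\int_{T_j}\Psi_\tau^2\,d\mu_R\lesssim c_\alpha(\mu)\,\delta^{-[(d+1)(d-2)+(1+\alpha)]/(d-1)}
\end{equation*}
up to $R^{O(\varepsilon)}$, uniformly in $j$.

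Finally, essential constancy of $F$ on $T_j$ also gives $F_j^2\sim\fint_{T_j}F^2$, so $\sum_j F_j^2\lesssim |T_j|^{-1}\|F\|_{L^2(B_R)}^2=\delta^{d+1}\|F\|_{L^2(B_R)}^2$. The bound $\|F\|_{L^2(B_R)}^2\lesssim R\|g_\tau\|_2^2$ follows from Minkowski's inequality for the convolution (combined with $\|\zeta_{\tau'}\|_1\lesssim 1$) to obtain $\|F(\cdot,t)\|_{L^2_x}\lesssim\|g_\tau\|_2$ via Plancherel in $x$ for each fixed $t$, and then integrating over the time slab $|t|\le R$. Putting everything together and invoking the algebraic identity
\begin{equation*}
(d+1)-\frac{(d+1)(d-2)+(1+\alpha)}{d-1}=\frac{d-\alpha}{d-1}
\end{equation*}
yields $\|\Psi_\tau F\|_{L^2(d\mu_R)}^2\lesssim c_\alpha(\mu)R\delta^{(d-\alpha)/(d-1)}\|g_\tau\|_2^2$ up to $R^{O(\varepsilon)}$. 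Since $\delta\le R^{-\lambda+\varepsilon}$, this is the desired bound. The main obstacle is arithmetic book-keeping: each H\"older step and each covering contributes its own power of $\delta$, and the proof works only because they combine into the clean exponent $(d-\alpha)/(d-1)$. The assumption $\alpha>1$ is essential, as it is what allows the sharper bound $\delta^{-1-\alpha}$ on $\mu_R(T_j)$; without it the strategy would fall short.
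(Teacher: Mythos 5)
Your proof is correct and follows essentially the same route as the paper's: partition $B_R$ into translates $T_j$ of $\tau'$, use essential constancy of $\Psi_\tau$ at unit scale plus Proposition~\ref{ThePsi} and the covering bound $\mu_R(T_j)\lesssim c_\alpha(\mu)\delta^{-(1+\alpha)}$, use essential constancy of $G_\tau$ on $T_j$, and finish with Minkowski and Plancherel; your H\"older step on $\int_{T_j}\Psi_\tau^2\,d\mu_R$ is just a reorganisation of the paper's $\mu_R(T_j)^{1/2-1/q}\|\Psi_\tau\|_{L^q(d\mu_R^j)}$ computation and yields the identical intermediate bound $\sqrt{c_\alpha(\mu)}\,R^{1/2+O(\varepsilon)}\delta^{\frac{d-\alpha}{2(d-1)}}\|g_\tau\|_2$. (Minor remark: the bound $\mu_R(T_j)\lesssim c_\alpha(\mu)\delta^{-1-\alpha}$ holds for every $\alpha>0$; the hypothesis $\alpha>1$ only makes it sharper than the single-ball bound $\delta^{-2\alpha}$, and is not actually needed for this lemma.)
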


\begin{proof}
Again we cover $B_{R}$ by a family $\{ T_{j} \}$ of translations of $\tau'$ with disjoint interiors. 
Setting
$G_{\tau} :=  | T_{\tau}g |*\zeta_{\tau'}$, and  
denoting the measure $d\mu_{R}$ restricted to $T_{j}$ by $d\mu^j_{R}$,  
we can write
\begin{equation}\label{Ineq1}
 \| \Psi_{\tau} G_{\tau} \|_{L^{2}(d\mu_{R})}  = 
\bigg( \sum_{j} \| \Psi_{\tau} G_{\tau} \|^{2}
_{L^{2}(d\mu^j_{R})} \bigg)^{1/2}. 
\end{equation}
As in the previous lemma, we use that $\Psi_{\tau}$ is essentially 
constant at scale one, so 
\begin{eqnarray*}
\| \Psi_{\tau} \|_{L^{2}(d \mu^j_{R})}
& \lesssim &
\mu_{R}(T_{j})^{\frac{1}{2} - \frac{1}{q}}
\|  \Psi_{\tau} \|_{L^{q}(d \mu^j_{R})}\\
& \lesssim & 
\mu_{R}(T_{j})^{\frac{1}{2} - \frac{1}{q}}
c_{\alpha}(\mu_R)^{\frac{1}{q}}
\| \Psi_{\tau} \|_{L^{q}(T_{j})} \\ 
& \lesssim &
c_{\alpha}(\mu)^{\frac{1}{q}}  
\mu_{R}(T_{j})^{\frac{1}{2} - \frac{1}{q}}
R^{\frac{d+1}{2q}\varepsilon}
|T_{j}|^{\frac{1}{q}},
\end{eqnarray*}
where the final inequality is by  the property (\ref{PsiPropertyFINAL}). 
Using this and the fact that  $G_{\tau}$ is essentially constant on $T_{j}$,
\begin{eqnarray}\nonumber 
\| \Psi_{\tau} G_{\tau} \|_{L^{2}(d \mu^j_{R})}
 & \lesssim & 
 c_{\alpha}(\mu)^{\frac{1}{q}} 
\mu_{R}(T_{j})^{\frac{1}{2} - \frac{1}{q}}  R^{\frac{d+1}{2q}\varepsilon}
|T_{j}|^{\frac{1}{q}}
\| G_{\tau} \|_{L^{\infty}(T_{j})}
\\ \nonumber
 & \lesssim & 
 c_{\alpha}(\mu)^{\frac{1}{q}} 
\mu_{R}(T_{j})^{\frac{1}{2} - \frac{1}{q}} R^{\frac{d+1}{2q}\varepsilon} 
|T_{j}|^{\frac{1}{q}}
|T_{j}|^{-\frac{1}{2}} \| G_{\tau} \|_{L^{2}(T_{j})}.
\end{eqnarray} 
Plugging this into \eqref{Ineq1}, we obtain 
\begin{align} \nonumber
\|  \Psi_{\tau} G_{\tau} \|_{L^{2}( d \mu_{R})}   
& \lesssim 
c_{\alpha}(\mu)^{\frac{1}{q}}
\mu_{R} (T_{j})^{\frac{1}{2}- \frac{1}{q}} 
R^{\frac{d+1}{2q}\varepsilon}
|T_{j}|^{\frac{1}{q}} |T_{j}|^{-\frac{1}{2}}
\bigg( \sum_{j}  
\|  G_{\tau} \|^{2}_{L^{2}(T_{j})}
\bigg)^{1/2}
\\ \nonumber
& 
\lesssim  
\sqrt{c_{\alpha}(\mu)}
R^{\frac{d+1}{2q}\varepsilon} 
\delta^{(d-\alpha) \left(\frac{1}{2}-\frac{1}{q}\right)} 
\bigg( \sum_{j}  
\|  G_{\tau} \|^{2}_{L^{2}(T_{j})} \bigg)^{1/2}
\\\label{here}
& 
\lesssim 
\sqrt{c_{\alpha}(\mu)}R^{\frac{d+1}{2q}\varepsilon} 
\delta^{\frac{d-\alpha}{2(d-1)}} 
\|  G_{\tau} \|_{L^{2}(B_{R})}
\end{align}
where in the second inequality we use 
$\mu_{R}(T_{j}) \lesssim c_\alpha(\mu) \delta^{- (\alpha +1)}$ 
which follows by covering the $T_{j}$ by $\delta^{-1}$ balls of radius $\delta^{-1}$. 

On the other hand,  by Minkowski's integral inequality, we can bound
\begin{align*}
\|  G_{\tau} \|_{L^{2}(B_{R})}=\| | T_{\tau}g| * \zeta_{\tau'} \|_{L^{2}(B_{R})}
&\le
\int \| T_{\tau}g(\cdot -y)\|_{L^{2}(B_{R})}\zeta_{\tau'}(y)\, dy\\
&\le
\int \Big\|\| g_y^\vee\|_{L^{2}(\R^{d-1})}\Big\|_{L^2(|t|\le R)}\zeta_{\tau'}(y)\, dy,
\end{align*}
where 
\begin{equation}\nonumber
g_{y}(\xi) := g(\xi)\,\chi_{\tau}(\xi) e^{-i \pi(y) \cdot \xi +i(t-t_{y}) \phi(\xi)},\qquad t_y := y - \pi(y).
\end{equation}
Here $\pi$ is the orthogonal projection onto $\R^{d-1}$. Then by Plancherel's theorem, the fact that $\|g_y\|_2=\|g_{\tau}\|_2$, and the fact that the integral of $\zeta_{\tau'}$ is bounded, we obtain
$$
\|  G_{\tau} \|_{L^{2}(B_{R})}\lesssim R^{1/2}\|g_{\tau}\|_2.
$$
Plugging this into \eqref{here},  we see that
\begin{equation}\label{polka}
\big\|\Psi_{\tau}|T_{\tau}g|*\zeta_{\tau'}\big\|_{L^{2}( d \mu_{R})} \lesssim \sqrt{c_{\alpha}(\mu)}R^{\frac{1}{2} + \frac{d+2}{2q}\varepsilon} 
\delta^{\frac{d-\alpha}{2(d-1)}}  \| g_\tau \|_{2},
\end{equation}
which, with $R^{-\lambda}\le \delta \le R^{-\lambda+\varepsilon}$, yields the desired uniform estimate. \end{proof}

\begin{lemma}\label{Fuman}
Let $0<\varepsilon<\frac{1}{4d}$ and $\alpha> 1$ and $\lambda=\frac{d-\alpha}{2d-\alpha-1}$. Then,
for all caps $\tau\sim\delta$ with $R^{-\lambda}  \le \delta \leq 1$ and all $d$-transversal caps $\tau_1,\ldots\tau_{d}\sim \delta/K_{d}$ contained in~$\tau$, 
\begin{equation}\label{MainLemmaFormula}
\Big\| \Psi_{\tau}\prod_{k=1}^{d}
|T_{\tau_k}g |^{\frac{1}{d}}
*\zeta_{\tau_k'} \Big\|_{L^{2}( d \mu_{R})} 
\lesssim \sqrt{c_{\alpha}(\mu)}
R^{\frac{1}{2} -   
 \frac{\lambda(d-\alpha)}{2(d-1)}+ d\varepsilon}
\| g_\tau \|_{2}.
\end{equation} 
\end{lemma}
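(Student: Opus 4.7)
My plan is to adapt the argument used to prove~\eqref{first0} in the preceding lemma to the multilinear setting. Cover $B_R$ by a family $\{T_j\}$ of translates of $\tau'$ with disjoint interiors (so that each $T_j$ is a cuboid of dimensions $\delta^{-1}\times\cdots\times\delta^{-1}\times\delta^{-2}$) and write
\begin{equation*}
\|\Psi_\tau F\|_{L^2(d\mu_R)}^2 = \sum_j \|\Psi_\tau F\|_{L^2(d\mu_R^j)}^2,
\end{equation*}
where $F:=\prod_{k=1}^{d}|T_{\tau_k}g|^{1/d}*\zeta_{\tau_k'}$ and $d\mu_R^j$ denotes $d\mu_R$ restricted to $T_j$. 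On each $T_j$ apply H\"older's inequality with exponents $q=\tfrac{2(d-1)}{d-2}$ and $p=2(d-1)$ (so that $1/q+1/p=1/2$) to separate the two factors. The $\Psi_\tau$ factor is handled exactly as before: using that $\Psi_\tau$ is essentially constant at scale one, together with the dimension bound $\mu_R(B_1)\lesssim c_\alpha(\mu)$ (to pass from $d\mu_R$ to Lebesgue measure) and the averaged bound~\eqref{PsiPropertyFINAL}, one obtains $\|\Psi_\tau\|_{L^q(d\mu_R^j)}\lesssim c_\alpha(\mu)^{1/q}|T_j|^{1/q}R^{(d+1)\lambda\varepsilon/q}$.

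For the $F$ factor the essential observation is that since each convolution $|T_{\tau_k}g|^{1/d}*\zeta_{\tau_k'}$ is essentially constant on translates of $\tau_k'$, which contains $T_j$ (being roughly a $K_d$-dilate of $\tau'$), the product $F$ itself is essentially constant on~$T_j$. This allows the passage
\begin{equation*}
\|F\|_{L^p(d\mu_R^j)} \lesssim \mu_R(T_j)^{1/p}\|F\|_{L^\infty(T_j)} \lesssim \mu_R(T_j)^{1/p}|T_j|^{-1/2}\|F\|_{L^2(T_j)},
\end{equation*}
and combining with the standard dimensional estimate $\mu_R(T_j)\lesssim c_\alpha(\mu)\delta^{-(\alpha+1)}$ (from covering $T_j$ with $\delta^{-1}$ balls of radius $\delta^{-1}$), the powers of $|T_j|=\delta^{-(d+1)}$ and of $\delta^{-(\alpha+1)}$ collapse, after squaring and summing in~$j$, to yield
\begin{equation*}
\|\Psi_\tau F\|_{L^2(d\mu_R)}^2 \lesssim c_\alpha(\mu)\,R^{C\varepsilon}\,\delta^{(d-\alpha)/(d-1)}\,\|F\|_{L^2(B_R)}^2,
\end{equation*}
where I have used $\sum_j\|F\|_{L^2(T_j)}^2=\|F\|_{L^2(\cup_j T_j)}^2$ and absorbed the harmless enlargement of $B_R$ into the implicit constant (this uses $\delta^{-2}\lesssim R$).

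The main obstacle is the bound $\|F\|_{L^2(B_R)}^2\lesssim R\|g_\tau\|_2^2$. The natural tool suggested by the $d$-transversality hypothesis is the multilinear extension estimate of Proposition~\ref{MultilinEstimates2}, but a short computation shows that applying it after parabolic rescaling to scale one introduces a parasitic factor $\delta^{-(d-1)/d}$ from the rescaling of the $L^{2d/(d-1)}$-norm, which would exactly spoil the $\delta^{(d-\alpha)/(d-1)}$ gain obtained above. My plan is therefore to bypass multilinear extension entirely and reduce to Plancherel on each factor separately via AM--GM. By H\"older's inequality applied to the product of $d$ functions,
\begin{equation*}
\|F\|_{L^2(B_R)}^2 \leq \prod_{k=1}^{d}\left\||T_{\tau_k}g|^{1/d}*\zeta_{\tau_k'}\right\|_{L^{2d}(B_R)}^2,
\end{equation*}
and a pointwise H\"older inequality in the convolution integral (normalized so that $\int\zeta_{\tau_k'}\simeq 1$) gives $(|T_{\tau_k}g|^{1/d}*\zeta_{\tau_k'})^{2d}\leq |T_{\tau_k}g|^2*\zeta_{\tau_k'}$. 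Integrating over $B_R$ and using Plancherel in the spatial variable then yields $\||T_{\tau_k}g|^{1/d}*\zeta_{\tau_k'}\|_{L^{2d}(B_R)}^{2d}\lesssim R\|g_{\tau_k}\|_2^2$, where the enlargement of $B_R$ caused by the time-scale $\delta^{-2}$ of $\zeta_{\tau_k'}$ stays inside a strip of width $\lesssim R$ precisely when $\delta^{-2}\lesssim R$, which is where the hypothesis $\alpha>1$ enters: it forces $\lambda=\frac{d-\alpha}{2d-\alpha-1}<\tfrac12$ and hence $\delta\geq R^{-\lambda}\geq R^{-1/2}$. The proof then concludes by AM--GM, $\prod_k\|g_{\tau_k}\|_2^{2/d}\leq \|g_\tau\|_2^2$, together with $\delta^{(d-\alpha)/(d-1)}\leq R^{-\lambda(d-\alpha)/(d-1)}$ (valid since $\alpha<d$ and $\delta\geq R^{-\lambda}$). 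Notably, the $d$-transversality of $\tau_1,\dots,\tau_d$ is not actually exploited in the argument itself; it is the structural hypothesis that makes the Bourgain--Guth decomposition of Section~\ref{decomposition} available, but the pointwise estimate on $F$ is obtained by treating each factor independently via Plancherel.
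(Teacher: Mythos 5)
There is a genuine gap, and it is fatal to the strategy rather than a fixable slip. Your final step asserts $\delta^{(d-\alpha)/(d-1)}\leq R^{-\lambda(d-\alpha)/(d-1)}$ ``since $\alpha<d$ and $\delta\geq R^{-\lambda}$'', but this inequality runs the wrong way: the exponent $\tfrac{d-\alpha}{d-1}$ is positive, so $\delta\geq R^{-\lambda}$ gives $\delta^{(d-\alpha)/(d-1)}\geq R^{-\lambda(d-\alpha)/(d-1)}$. Your chain of estimates culminates in the bound $\sqrt{c_\alpha(\mu)}\,R^{C\varepsilon}\,R^{1/2}\,\delta^{\frac{d-\alpha}{2(d-1)}}\|g_\tau\|_2$, which is \emph{increasing} in $\delta$ and at $\delta=1$ equals $R^{1/2+C\varepsilon}$ --- exceeding the target $R^{\frac12-\frac{\lambda(d-\alpha)}{2(d-1)}+d\varepsilon}$ by a fixed positive power of $R$. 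What you have reproved is essentially the linear-term estimate \eqref{polka}, which is precisely why the paper can only use that estimate in the narrow range $R^{-\lambda}\le\delta\le R^{-\lambda+\varepsilon}$; the multilinear term must be controlled for all $\delta$ up to $1$, and an argument that treats the $d$ factors independently via Plancherel cannot see any gain there. Transversality is not a structural bystander here --- it is the only source of the missing decay when $\delta$ is large.

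Your stated reason for bypassing Proposition~\ref{MultilinEstimates2} is also based on a miscalculation. Working in $L^p$ with $p=\tfrac{2d}{d-1}$ (as the paper does), the Hölder step from $L^2(d\mu_R)$ to $L^p(d\mu_R)$ costs only $R^{\alpha(\frac12-\frac1p)}=R^{\alpha/(2d)}$, the essentially-constant argument on translates of $\tau'$ yields a gain $\delta^{(d-\alpha)(\frac1p-\frac1q)}=\delta^{\frac{d-\alpha}{2d(d-1)}}$, and the parabolic rescaling in $L^p$ produces a loss of $\delta^{-\frac{d-1}{2d}}$ (not $\delta^{-\frac{d-1}{d}}$). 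The worst case of $R^{\alpha/(2d)}\,\delta^{\frac{d-\alpha}{2d(d-1)}-\frac{d-1}{2d}}$ over $R^{-\lambda}\le\delta\le1$ occurs at $\delta=R^{-\lambda}$, and one checks that it equals the target exponent exactly when $\lambda(2d-\alpha-1)\le d-\alpha$, i.e.\ for the stated $\lambda=\frac{d-\alpha}{2d-\alpha-1}$. In other words, the choice of $\lambda$ is precisely the point where the rescaling loss and the measure-theoretic gain balance; the multilinear extension estimate is not spoiled by the rescaling but is indispensable to the whole range of $\delta$.
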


\begin{proof}
Setting $G_{\tau} :=  \prod_{k=1}^{d}
|T_{\tau_k}g|^{\frac{1}{d}}
*\zeta_{\tau_k'}$, we will prove that 
\begin{equation}\label{numbertwo}
\|  \Psi_{\tau}G_{\tau} \|_{L^{2}( d\mu_{R})}   
\lesssim
\sqrt{c_{\alpha}(\mu)}
R^{d\varepsilon}
R^{\frac{\alpha}{2d}} \delta^{\frac{d-\alpha}{2d(d-1)}-\frac{d-1}{2d}}
\| g_{\tau} \|_{2},
\end{equation}
which on can calculate gives the required bound for  $\delta\ge R^{-\frac{d-\alpha}{2d-\alpha-1}}$. 
By H\"older's inequality with $p=\frac{2d}{d-1}$, we first note that
\begin{eqnarray}\label{Ineq1Bis}
 \| \Psi_{\tau} G_{\tau} \|_{L^{2}(d\mu_{R})}  
 & \lesssim &
 c_{\alpha}(\mu)^{\frac{1}{2}-\frac{1}{p}} R^{\alpha \left( \frac{1}{2} - \frac{1}{p}\right)}
  \| \Psi_{\tau} G_{\tau} \|_{L^{p}(d\mu_R)}  
 \\ \nonumber
 & = &
 c_{\alpha}(\mu)^{\frac{1}{2}-\frac{1}{p}} R^{\frac{\alpha}{2d}}
\bigg( \sum_{j} \| \Psi_{\tau} G_{\tau} \|^{p}
_{L^{p}(d \mu^j_R)} \bigg)^{1/p}, 
\end{eqnarray}
where $d\mu^j_{R}$ denotes the measure $d\mu_{R}$ a member of the cover of $B_R$ by translates of $\tau'$. Using that $\Psi_{\tau}$ is essentially 
constant at scale one,
\begin{eqnarray*}
\| \Psi_{\tau} \|_{L^{p}(d\mu^j_R)}
& \lesssim &
\mu(T_{j})^{\frac{1}{p} - \frac{1}{q}}
\| \Psi_{\tau} \|_{L^{q}(d \mu^j_R)} \\ 
& \lesssim &
\mu(T_{j})^{\frac{1}{p} - \frac{1}{q}}c_{\alpha}(\mu)^{\frac{1}{q}}
\| \Psi_{\tau} \|_{L^{q}(T_{j})} \\ 
& \lesssim &
\mu(T_{j})^{\frac{1}{p} - \frac{1}{q}}c_{\alpha}(\mu)^{\frac{1}{q}}
|T_{j}|^{\frac{1}{q}}R^{\frac{d+1}{2q}\varepsilon},
\end{eqnarray*}
where the final inequality is by the property (\ref{PsiPropertyFINAL}). 
As $\tau'\subset \tau_k'$ we still have that that  $G_{\tau}$ is essentially constant on $T_{j}$, so that
\begin{eqnarray}\nonumber 
\| \Psi_{\tau} G_{\tau} \|_{L^{p}(d\mu^j_R)}
 & \lesssim & 
\mu(T_{j})^{\frac{1}{p} - \frac{1}{q}}c_{\alpha}(\mu)^{\frac{1}{q}}
|T_{j}|^{\frac{1}{q}}R^{\frac{d+1}{2q}\varepsilon}
\| G_{\tau} \|_{L^{\infty}(T_{j})}
\\ \nonumber
 & \lesssim & 
\mu(T_{j})^{\frac{1}{p} - \frac{1}{q}}c_{\alpha}(\mu)^{\frac{1}{q}}
R^{\frac{d+1}{4}\varepsilon}
|T_{j}|^{\frac{1}{q} - \frac{1}{p}}
\| G_{\tau} \|_{L^{p}(T_{j})}
\\ \nonumber
& \lesssim &
c_{\alpha}(\mu)^{\frac{1}{p}}
R^{\frac{d+1}{4}\varepsilon}
\delta^{(d-\alpha) \left( \frac{1}{p} - \frac{1}{q} \right)}
\| G_{\tau} \|_{L^{p}(T_{j})}
\\ \nonumber
& = &
c_{\alpha}(\mu)^{\frac{1}{p}}
R^{\frac{d+1}{4}\varepsilon}
\delta^{\frac{d-\alpha}{2d(d-1)}}
\| G_{\tau} \|_{L^{p}(T_{j})}
\end{eqnarray} 
Plugging this into \eqref{Ineq1Bis}, we obtain 
\begin{align*} 
\|  \Psi_{\tau} G_{\tau} \|_{L^{2}( d \mu_{R})}   
& \lesssim 
\sqrt{c_{\alpha}(\mu)}
R^{\frac{d+1}{4}\varepsilon} 
R^{\frac{\alpha}{2d}}
\delta^{\frac{d-\alpha}{2d(d-1)}}
\|  G_{\tau} \|_{L^{p}(B_R)}.
\end{align*}

In order to bound $\|G_{\tau}\|_{L^{p}(B_{R})}$, we write 
\begin{eqnarray}\nonumber
G_{\tau}(x,t) & = &
\prod_{k=1}^{d} \int |T_{\tau_{k}} g|^{\frac{1}{d}}
((x,t)-y_{k}) \zeta_{\tau_k'} (y_{k}) \ d y_{k}
\\ \nonumber
& = &
\prod_{k=1}^{d} \int |T_{\tau_{k}} g_{y_{k}}|^{\frac{1}{d}}
(x,t) \zeta_{\tau_k'} (y_{k}) \ d y_{k},
\end{eqnarray}
where this time
\begin{equation}\nonumber
g_{y_{k}} := g\,\chi_{\tau_k} e^{-i \pi(y_{k}) \cdot \xi -it_{k} \phi(\xi)},\qquad t_k := y_{k} - \pi(y_{k}).
\end{equation}
Then, by Minkowski's integral inequality, 
it will suffice to bound
$$
 \int\Big\|\prod_{k=1}^{d} |T_{\tau_k}g_{y_k}|^{\frac{1}{d}}
\Big\|_{L^{p}(B_{R})} \prod_{k=1}^{d}\zeta_{\tau_k'}(y_k)\, d y_{1}\ldots dy_{d}.
$$
Again $\|g_{y_k}\|_2=\|g_{\tau_k}\|_2$, and so it remains to prove the multilinear extension estimate
\begin{equation}\label{rrr}\Big\|\prod_{k=1}^{d} |T_{\tau_k}g|^{\frac{1}{d}}
\Big\|_{L^{p}(B_{R})}\lesssim R^{\varepsilon}\delta^{-\frac{d-1}{2d}}\| g_\tau \|_{2}.
\end{equation}
We recall that
$\tau_{k}$ are traversal caps at scale $\delta/K_d$  and so a direct application of Theorem~\ref{MultilinEstimates} would give us the inequality with the constant $\mathfrak{c}(\delta^d K^{-d}_d)$. We do not know how large this is, however we have chosen the scales so that at least we know that  $\mathfrak{c}(K^{-d}_d)\le R^{\varepsilon^2}$. Thus, using the fact the caps $\tau_k$ are contained in  $\tau$ at scale $\delta$, we can first modulate and scale the inequality in order to get into this situation.

Denoting by $\xi_{0}$ the center of $\pi(\tau)={Q}$ we let $\widetilde{Q}_k$ be the scaled versions of $Q_k$ which are first translated by $-\xi_{0}$. 
Indeed, introducing new variables, 
\begin{equation*}
(x', t') = (\delta x, \delta^{2} t), \quad
\xi - \xi_{0} = \delta \xi', 
\end{equation*}
and writing 
\begin{equation*}\label{Def:gTauBar}
f(\xi') :=
\delta^{\frac{d-1}{2}}g(\xi_{0}+\delta \xi'),
\end{equation*}  so that
$\| f \|_{2} 
= \|g\|_{2}$,
it is trivial to calculate that
\begin{equation*}\nonumber
|T_{\tau_k}g (x,t)|  =  \delta^{\frac{d-1}{2}}|\widetilde{T}_{\widetilde{\tau}_k}f(x' + \delta^{-1} \nabla\phi (\xi_0) t', t')|,
\end{equation*}
where
\begin{equation}\label{polk}
\widetilde{T}_{\widetilde{\tau}_k}f(x, t):=\int_{\widetilde{Q}_k} 
e^{i x \cdot \xi + i tS_{\xi_0,\delta}\phi(\xi)}
f(\xi) 
\ d \xi
\end{equation}
and the scaled phase is given by
$$
S_{\xi_0,\delta}\phi(\xi')=\delta^{-2}\Big(\phi(\xi_0+\delta\xi')-\delta\nabla \phi(\xi_0)\cdot \xi'-\phi(\xi_0)\Big).
$$
The $d$-transversal caps $\widetilde{\tau}_{k}$ satisfying $\pi(\widetilde{\tau}_{k})=\widetilde{Q}_k$ 
are now at scale $1/K_{d}$. Writing
\begin{align*}
\prod_{k=1}^{d} |T_{\tau_k}g|^{\frac{1}{d}}(x,t)
& =  
\delta^{\frac{d-1}{2}} \prod_{k=1}^{d}
 |\widetilde{T}_{\widetilde{\tau}_k}f|^{\frac{1}{d}}
(x' + \delta^{-1} \nabla\phi (\xi_0) t', t'),
\end{align*}
we see that the left-hand side of \eqref{rrr} is bounded by
\begin{align*}   
\delta^{ \frac{d-1}{2} - \frac{d+1}{p} }\Big(\int_{|t'|\le \delta^2R}\int_{|x'|\le \delta R}\Big|\prod_{k=1}^{d}
 |\widetilde{T}_{\widetilde{\tau}_k}f|^{\frac{1}{d}}&
(x' + \delta^{-1} \nabla\phi (\xi_0) t', t')\Big|^p dx'dt'\Big)^{1/p}\\
\le \,
\delta^{-\frac{d-1}{2d}} \Big\|\prod_{k=1}^{d}
 |\widetilde{T}_{\widetilde{\tau}_k}f|^{\frac{1}{d}}\Big\|_{L^{p}([-\delta^2R,\delta^2R]\times B_{2\delta R})},
\end{align*}
Here, we change variables $x=x' + \delta^{-1} \nabla\phi(\xi_{0})t'$ and use that $\delta^{-1} \nabla\phi(\xi_{0})t'$ is bounded above by $\delta R$ so that the oblique tube can be covered by the fatter cylinder. Now, by Proposition~\ref{MultilinEstimates2},
$$
\Big\| \prod_{k=1}^{d}
 |\widetilde{T}_{\widetilde{\tau}_k}f|^{\frac{1}{d}}\Big\|_{L^{p}(\R^{d-1}\times[-\delta^2R,\delta^2R])}\lesssim \mathfrak{c}(\varepsilon)(\delta^2R)^\varepsilon \|f\|_2,
$$
and so altogether we get  \eqref{rrr}, which completes the proof.
\end{proof}

The conjectured $m$-linear extension  estimates \cite[Conjecture~4]{B}, with $m\le d-1$, combined with the arguments of this section, would yield 
\begin{equation}\label{fre}
\beta_d(\alpha)\ge \min \Big\{ \alpha-1+\frac{(d-\alpha)(d+m-2\alpha)}{2(m-1)(d+m-\alpha-1)},\alpha-\frac{2\alpha}{d+m}\Big\},
\end{equation}
whenever $3\le m\le d-1$. Comparing the second term in the minimum with the bound of Theorem~\ref{us}, it is clear that this is not an improvement for larger $\alpha$. However, by taking $m=d/2+1$ (assuming that $d$ is even), this would improve our bound and  Erdo\u{g}an's in a neighbourhood of $\alpha=d/2+2/3$. It would not  be sufficient to improve the state-of-the-art for Falconer's conjecture however. Using the partial results for $m$-linear restriction already proven in \cite[formula (40)]{BCT}, by the same argument one obtains
\begin{equation*}\label{free}
\beta_d(\alpha)\ge \min \Big\{ \alpha-1+\frac{(d-\alpha)(m-\alpha)}{(m-1)(2m-\alpha-1)},\alpha-\frac{\alpha}{m}\Big\}.
\end{equation*}

Given that $m$-linear estimates necessarily have worse integrability properties than the $d$-linear estimates of Section~\ref{mult}, it is not obvious that anything can be gained by using these.  The reason that they can be effective is that the decomposition of Bourgain and Guth  improves if we take the initial dichotomy at a lower level of multilinearity. The improvement manifests itself in the fact that the functions $\Psi_\tau$ have better integrability properties and so we pay less while removing them. This kind of thing was first observed by Temur  in the context of the linear restriction problem  \cite{Tem}. Here, the reduced integrability in the estimates leads to  the estimate \eqref{numbertwo}
 having a  worse dependency on $R$ (this produces the second term in the minimum), however the improved properties of $\Psi_\tau$ lead to both \eqref{numbertwo} and \eqref{polka} having a better dependency on $\delta$, and together they would yield \eqref{fre}  after choosing the limiting scale $\lambda$  in an optimal fashion.

\section{Proof of Proposition~\ref{propo}}\label{prop}

In order to avoid repetition in the following section, we consider $m\ge 1$, however it will suffice to consider $m=1$ here. If $v_{0}$ and $v_1$ are in the Schwartz class then the solution $v$ to the wave equation with this initial data can be written as 
\begin{align*}
v(\cdot,t)&=\cos(t\sqrt{-\Delta}) v_0+\frac{\sin(t\sqrt{-\Delta})}{\sqrt{-\Delta}} v_1\\
&=e^{it\sqrt{-\Delta}}f_++e^{-it\sqrt{-\Delta}}f_-.
\end{align*}
Here $f_+=\frac{1}{2}(v_0-iI_1\ast v_1)$ and $f_-=\frac{1}{2}(v_0+iI_1\ast v_1)$, where $I_1$ is the Riesz kernel, and
\begin{equation*}
e^{i t (-\Delta)^{m/2}} \!f(x):=\frac{1}{(2 \pi)^{d/2}}
\int_{\R^d}\widehat{f}(\xi)\, e^{ix\cdot\xi +it | \xi |^{m}} d \xi. 
\end{equation*}
For data in $\dot{H}^{s}\times\dot{H}^{s-1}$, both $f_+$ and $f_-$ belong to $\dot{H}^s$, however  this integral
does not necessarily exist in the sense of Lebesgue for $s\le n/2$.
Instead we define $v(x, t)$ to be the pointwise limit
\begin{equation}\label{UPointDef0}
v(x, t) := \lim_{N \rightarrow \infty} 
S^{\!N,1}_{t} f_{+}(x)+S^{\!N,1}_{-t} f_{-}(x),  
\end{equation}
whenever the limit exists, where
\begin{equation*}\label{SNTDef}
S^{\!N,m}_{t} f :=
\int_{\R^d} \psi\left( \frac{| \xi |}{N}\right) 
\widehat{f}(\xi)\, e^{i x \cdot \xi +it | \xi |^{m}}  d \xi
\end{equation*}
and $\psi$ is a positive Schwartz function that equals $(2\pi)^{-d/2}$ at the origin.
This
coincides almost everywhere with the classical  
 solution defined via the $L^{2}$-limit. 

Writing $\|I_{s} * f\|_{\dot{H}^s}:=\|f\|_2$, we know that $f_+$, $f_-$ and the limit (\ref{UPointDef0}) are well-defined with 
respect to fractal measures
provided that $\alpha > d-2s$ due to the inequalities 
\begin{equation*}\label{SimpleMaxIneqPrelim}
\| I_{s} * f\|_{L^{1}(d \mu)}
\lesssim \sqrt{c_{\alpha}(\mu)\|\mu\|}\, \| f \|_{2},
\end{equation*}
\begin{equation*}\label{SimpleMaxIneq}
\Big\| \sup_{N > 1} | S^{\!N,m}_{t} I_{s} * f | \Big\|_{L^{1}(d \mu)}
\lesssim \sqrt{c_{\alpha}(\mu)\|\mu\|}\, \| f \|_{2};
\end{equation*}
 see for example \cite{BBCR}, \cite{BR} or \cite[Chapter 17]{M}.
Then by standard arguments (see for example Appendix B of~\cite{BR}) and an application of Frostman's lemma (see for example \cite[Theorem 2.7]{M}), 
the implication
$$
\beta_d(\alpha)> d-2s\quad \Rightarrow \quad  \gamma_{d}(s)\le \alpha
$$
 can be deduced from from the following lemma.

\begin{lemma}\label{loc} Let $m\ge1$, $d\ge 2$ and
 $0<s<d/2$. Then 
\begin{equation*}
\Big\|\sup_{t\in\R}\sup_{N\ge1}|S^{\!N,m}_{t}I_{s} * f|\Big\|_{L^1(d\mu)}\lesssim
\sqrt{c_\alpha(\mu)\|\mu\|}\|f\|_{2}
\end{equation*}
whenever $f\in L^2(\R^d)$, $\mu$ is an $\alpha$-dimensional measure and $s>\frac{d-\beta_d(\alpha)}{2}$. 
\end{lemma}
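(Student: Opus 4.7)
The plan is to reduce the maximal estimate to the decay estimate \eqref{dz} by a standard linearization–duality argument followed by polar coordinates. First, by a measurable selection (taking $(t,N)$ over a countable dense set and invoking monotone convergence), I replace the double supremum by a fixed pair of measurable functions $x\mapsto(t(x),N(x))$. Dualizing against a function $h$ with $|h(x)|=1$ on $\supp\mu$, and using $\widehat{I_s\ast f}(\xi)=|\xi|^{-s}\widehat{f}(\xi)$, the desired inequality becomes
\begin{equation*}
\Bigl|\int\widehat{f}(\xi)\,|\xi|^{-s}\,\overline{F(\xi)}\,d\xi\Bigr|\lesssim\sqrt{c_\alpha(\mu)\|\mu\|}\,\|f\|_2,
\end{equation*}
where $F(\xi):=\int h(x)\,\psi(|\xi|/N(x))\,e^{ix\cdot\xi+it(x)|\xi|^m}\,d\mu(x)$. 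Plancherel and Cauchy--Schwarz in $\xi$ reduce this further to
\begin{equation*}
\int_{\R^d}|\xi|^{-2s}|F(\xi)|^2\,d\xi\,\lesssim\,c_\alpha(\mu)\|\mu\|.
\end{equation*}

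Next I pass to polar coordinates $\xi=r\omega$ with $\omega\in\Sn$, so that the left-hand side becomes $\int_0^\infty r^{d-1-2s}\|F(r\,\cdot\,)\|_{L^2(\Sn)}^2\,dr$. For each fixed $r>0$, $F(r\omega)=\widehat{\nu_r}(r\omega)$, where $d\nu_r(x):=h(x)\,\psi(r/N(x))\,e^{it(x)r^m}\,d\mu(x)$ is a complex measure whose pointwise modulus is bounded by a constant multiple of $\mu$. Splitting $\nu_r$ into its four positive components exactly as in the argument preceding \eqref{dual} in the proof of Theorem~\ref{new}, each such component is a positive measure supported in $B(0,1)$ with $c_\alpha$-constant $\lesssim c_\alpha(\mu)$ and total mass $\lesssim\|\mu\|$. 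Hence for $r\ge 1$ the hypothesis \eqref{dz}, applied with any fixed $\beta<\beta_d(\alpha)$, gives $\|F(r\,\cdot\,)\|_{L^2(\Sn)}^2\lesssim c_\alpha(\mu)\|\mu\|\,r^{-\beta}$, while for $r<1$ the trivial bound $|F(r\omega)|\lesssim\|\mu\|$ yields $\|F(r\,\cdot\,)\|_{L^2(\Sn)}^2\lesssim\|\mu\|^2$.

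Integrating, the high-frequency contribution is $\lesssim c_\alpha(\mu)\|\mu\|\int_1^\infty r^{d-1-2s-\beta}\,dr$, which converges provided $\beta>d-2s$; since $s>(d-\beta_d(\alpha))/2$ by assumption, such a $\beta<\beta_d(\alpha)$ can be chosen. The low-frequency contribution is $\lesssim\|\mu\|^2\int_0^1 r^{d-1-2s}\,dr$, which is finite thanks to $s<d/2$, and since $\supp\mu\subset B(0,1)$ one has $\|\mu\|\le c_\alpha(\mu)$, so $\|\mu\|^2\lesssim c_\alpha(\mu)\|\mu\|$. Combining the two pieces closes the argument.

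The only non-routine technicalities are the measurable selection for the double supremum and making the four-part splitting of $\nu_r$ measurable in $r$, both of which are standard and largely addressed in \cite{BBCR}. I expect no essential obstacle beyond this bookkeeping; the heart of the proof is that polar coordinates align the homogeneous weight $|\xi|^{-2s}$ with the radial variable so that the spherical $L^2$-decay encoded in \eqref{dz} is directly exploited, and the two hypotheses $s>(d-\beta_d(\alpha))/2$ and $s<d/2$ are precisely what is required for radial integrability at infinity and at zero, respectively.
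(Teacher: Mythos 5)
Your argument is correct, and at its core it is the same as the paper's: polar coordinates align the weight $|\xi|^{-2s}$ with the radial variable, the spherical decay \eqref{dz} is applied at each radius $r>1$ with some $\beta$ satisfying $d-2s<\beta<\beta_d(\alpha)$, the trivial bound plus $\|\mu\|\le c_\alpha(\mu)$ handles $r\le 1$, and Cauchy--Schwarz in the radial variable closes the estimate, with $s<d/2$ and $\beta>d-2s$ governing integrability at $0$ and $\infty$ respectively. The organizational difference is which side of the duality you work on. You linearize the supremum, dualize against $h\in L^\infty(d\mu)$, and then apply \eqref{dz} in its original form to the complex measures $\nu_r$ (splitting each into four positive parts). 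The paper instead stays on the $L^1(d\mu)$ side: it bounds $|\psi(R/N)|$ and $|e^{itR^m}|$ by constants \emph{inside} the radial integral, which makes the resulting majorant manifestly independent of $t$ and $N$ -- so no linearization or measurable selection is ever needed -- and then applies the dual (extension-operator) form \eqref{dual} of \eqref{dz} to $\widehat{f}(R\,\cdot\,)$ for each $R$, finishing with Cauchy--Schwarz in $R$. The paper's ordering buys a shorter proof with no measure-splitting bookkeeping; your version is equally valid, and the technical points you flag (measurable selection, measurability of the splitting in $r$) are indeed routine, e.g.\ $\|F(r\,\cdot\,)\|_{L^2(\mathbb{S}^{d-1})}$ is continuous in $r$ by dominated convergence since $\mu$ is finite.
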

\begin{proof} First of all we remark that the maximal function is Borel measurable by comparing with the maximum function with time restricted to the rationals; see \cite[Lemma 17.7]{M}. Then, using polar coordinates we write
\begin{align*}
|S^{\!N,m}_{t}I_{s} * f(x)|&=\left|\int_{\R^d}\psi(N^{-1}|\xi|)\,|\xi|^{-s}\widehat{f}(\xi)\,e^{i(x\cdot\xi + t|\xi|^m)} \, d\xi\right|\\
&= \left|\int_0^\infty \psi(N^{-1}R)\,R^{d-1-s}e^{itR^m}\!\!\int_{\mathbb{S}^{d-1}}\widehat{f}(R\omega)\, e^{iRx\cdot\omega}d\sigma(\omega)\,dR\right|\\
& \lesssim \int_0^\infty
R^{d-1-s}\left|\int_{\mathbb{S}^{d-1}}\widehat{f}(R\omega)
\,e^{iRx\cdot\omega}d\sigma(\omega)\right|dR,
\end{align*}
so that, by Fubini's theorem,
\begin{equation}\label{til}
\Big\|\sup_{t\in\R}\sup_{N\ge1}|S^{\!N}_{t}I_{s} * f|\Big\|_{L^1(d\mu)}\lesssim
\int_0^\infty
\!\!\!\!\!R^{d-1-s}\big\|\big(\widehat{f}(R\,\cdot)d\sigma\big)^\vee(R\,\cdot\,)\big\|_{L^1(d\mu)}dR.
\end{equation}
Noting that, even when $R$ is small, we have 
\begin{equation*}
\|\widehat{\mu}(R\,\cdot\,)\|^2_{L^2(\mathbb{S}^{d-1})}\lesssim \|\mu\|^2\lesssim
c_{\alpha}(\mu)\|\mu\|,\end{equation*}
 the inequality \eqref{dz}
implies by duality that
$$
\big\|\big(\widehat{f}(R\,\cdot)d\sigma\big)^\vee(R\,\cdot\,)\big\|_{L^1(d\mu)}\lesssim
\sqrt{c_\alpha(\mu)\|\mu\|}\,(1+R)^{-\beta/2}\|\widehat{f}(R\,\cdot\,)\|_{L^2(\mathbb{S}^{d-1})}.
$$
for all $\beta<\beta_d(\alpha)$, so that (\ref{til}) is bounded by
\begin{align*}
&\!\!\!\!\!\!\!\!\lesssim \sqrt{c_\alpha(\mu)\|\mu\|}\int_0^\infty
\!\frac{R^{d-1-s}}{(1+R)^{\beta/2}}\|\widehat{f}(R\,\cdot\,)\|_{L^2(\mathbb{S}^{d-1})}dR.
\end{align*}
Finally, by an application of the Cauchy--Schwarz inequality, we can continue to estimate as
\begin{align*}
&\lesssim \sqrt{c_\alpha(\mu)\|\mu\|}\left(\int_0^\infty \frac{R^{d-1-2s}}{(1+R)^{\beta}}dR\right)^{1/2}\left(\int_0^\infty\|\widehat{f}(R\,\cdot\,)\|^2_{L^2(\mathbb{S}^{d-1})}R^{d-1}dR\right)^{1/2}\\
&\lesssim \sqrt{c_\alpha(\mu)\|\mu\|}\,\|f\|_{L^2(\R^d)},
\end{align*}
where in the final inequality we choose $\beta$ so that $\beta_d(\alpha)>\beta>d-2s$ as we may.\end{proof}

\section{Proof of Theorem~\ref{bil}}
\label{schrodingerconv}

As in the previous section, if $i\partial_t u+\Delta u=0$ and the initial data $u_{0}$ is in the Schwartz class, we can write 
\begin{equation*}
u(x, t) = e^{i t \Delta}u_{0}(x):=\frac{1}{(2 \pi)^{n/2}}
\int_{\Rn}\widehat{u}_{0}(\xi)\, e^{ix\cdot\xi -it | \xi |^{2}} d \xi , 
\end{equation*}
however for data in $H^{s}$  we define
\begin{equation}\label{UPointDef}
u(x, t) := \lim_{N \rightarrow \infty} 
S^{\!N,2}_{-t} u_{0}(x)  
\end{equation}
whenever the limit exists. This 
coincides almost everywhere with the classical  
 solution defined via the $L^{2}$-limit. Then, by standard arguments,
an upper bound for $\alpha_{n}(s)$ can be obtained from 
appropriate maximal inequalities with respect to fractal measures. We summarise this in the following lemma.

\begin{lemma}\label{BridgeLemma} \cite{BBCR}
Let $\alpha > \alpha_{0} \geq n-2s$ and suppose that
\begin{equation*}\label{CompleteMaxIneq}
\Big\| \sup_{0<t<1}  | e^{i t \Delta}u_{0} | \Big\|_{L^{1}(d \mu)} \lesssim
\sqrt{c_{\alpha}(\mu)\|\mu\|}\, \| u_{0} \|_{H^{s}(\Rn)}
\end{equation*}
whenever $u_0$ is in the  Schwartz class and $\mu$ is an $\alpha$-dimensional. 
Then $\alpha_{n}(s) \leq \alpha_{0}$.
\end{lemma}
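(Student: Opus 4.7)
The plan is to combine the hypothesised maximal inequality with Frostman's lemma, via an approximation argument, to rule out divergence sets of dimension exceeding $\alpha_0$. I would argue by contradiction: suppose $\alpha_n(s)>\alpha_0$, so that there exists $u_0\in H^s$ with $\dim_H(\mathcal{D}(u_0))>\alpha_0$; choose $\alpha$ with $\alpha_0<\alpha<\dim_H(\mathcal{D}(u_0))$, which is consistent with the hypothesis $\alpha>\alpha_0\ge n-2s$.

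The first step is to extend the maximal inequality to all of $H^s$ by density. Choose Schwartz functions $\phi_k$ with $\|u_0-\phi_k\|_{H^s}\to 0$ rapidly (say faster than $2^{-k}$). For $\phi_k$ Schwartz, the integral defining $e^{it\Delta}\phi_k$ converges absolutely and is jointly continuous in $(x,t)$, so classically $e^{it\Delta}\phi_k(x)\to\phi_k(x)$ everywhere as $t\to0$, and moreover $S^{N,2}_{-t}\phi_k(x)\to e^{it\Delta}\phi_k(x)$ uniformly in $(x,t)$. Applying the hypothesised maximal inequality to $u_0-\phi_k$ in conjunction with the analogous inequality for the truncated operators $S^{N,2}_{-t}$ (which holds with the same proof, as the symbol $\psi(|\xi|/N)$ is a bounded multiplier in $\xi$), one shows that the limit in \eqref{UPointDef} exists for $\mu$-a.e.\ $x$ and defines $u(x,t)$ with
\begin{equation*}
\Big\|\sup_{0<t<1}|u(\cdot,t)|\Big\|_{L^1(d\mu)}\lesssim \sqrt{c_\alpha(\mu)\|\mu\|}\,\|u_0\|_{H^s}
\end{equation*}
for any $\alpha$-dimensional $\mu$. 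Writing $u(x,t)-u_0(x)=(u(x,t)-e^{it\Delta}\phi_k(x))+(\phi_k(x)-u_0(x))+(e^{it\Delta}\phi_k(x)-\phi_k(x))$ and using the maximal inequality on the first two differences together with $|\phi_k-u_0|\le |I_s\ast (f_k-f)|$ (bounded in $L^1(d\mu)$ by $\sqrt{c_\alpha(\mu)\|\mu\|}\|u_0-\phi_k\|_{H^s}$ as recalled in the previous section), Chebyshev's inequality gives
\begin{equation*}
\mu\Big(\Big\{x:\limsup_{t\to0}|u(x,t)-u_0(x)|>\lambda\Big\}\Big)\lesssim \lambda^{-1}\sqrt{c_\alpha(\mu)\|\mu\|}\,\|u_0-\phi_k\|_{H^s}.
\end{equation*}
Letting $k\to\infty$ and then $\lambda\to 0$ yields $\mu(\mathcal{D}(u_0))=0$ for every $\alpha$-dimensional $\mu$.

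To conclude, I would invoke Frostman's lemma. The set $\mathcal{D}(u_0)$ is a Borel set (in fact, the measurability of the maximal function and the limsup is verified by restricting to $t\in\mathbb{Q}$, as in \cite[Lemma~17.7]{M}). Since $\dim_H(\mathcal{D}(u_0))>\alpha$, the same holds for $\mathcal{D}(u_0)\cap B(0,r)$ for some $r>0$, so Frostman's lemma produces a nontrivial positive Borel measure $\mu$ supported in $\mathcal{D}(u_0)\cap B(0,r)$ with $c_\alpha(\mu)<\infty$; after translating and rescaling we may assume the support sits in $B(0,1)$ (this only alters the inequality by constants in the Schr\"odinger setting, after absorbing the rescaling into the data). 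This $\mu$ is $\alpha$-dimensional in the paper's sense, yet by construction $\mu(\mathcal{D}(u_0))>0$, contradicting the previous paragraph. Hence $\alpha_n(s)\le\alpha_0$.

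The main technical point is the bootstrap from a maximal bound for Schwartz data to a $\mu$-a.e.\ convergence statement for general $H^s$ data; in particular one must be careful that the maximal operator $\sup_{0<t<1,N\ge 1}|S^{N,2}_{-t}u_0|$ (and not merely its formal limit) satisfies the same $L^1(d\mu)$ bound, which is what legitimises the pointwise definition \eqref{UPointDef} and the Borel--Cantelli-style argument. This is a standard manoeuvre documented in the Appendix of \cite{BR}, so I would cite it rather than reproduce the details.
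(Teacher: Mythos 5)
Your overall strategy---upgrade the Schwartz-data bound to a maximal bound for general data, deduce $\mu(\mathcal{D}(u_0))=0$ by Chebyshev, and conclude with Frostman's lemma---is exactly the route the paper takes, which likewise defers the details to \cite{BBCR} and to Appendix B of \cite{BR}. The Frostman step and the three-term splitting of $u(x,t)-u_0(x)$ are fine.

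The one step that does not survive scrutiny as written is the claim that the maximal inequality for the truncated operators $S^{N,2}_{-t}$ ``holds with the same proof, as the symbol $\psi(|\xi|/N)$ is a bounded multiplier in $\xi$.'' A bounded multiplier does not pass through $\sup_{N>1}\sup_{0<t<1}$ inside $L^1(d\mu)$: writing $S^{N,2}_{-t}u_0=K_N*\,e^{it\Delta}u_0$ with $K_N=N^n\check{\psi}(N\cdot)$ only dominates the double maximal function pointwise by a Hardy--Littlewood-type maximal function of $\sup_{t}|e^{it\Delta}u_0|$, and the Hardy--Littlewood operator is not bounded on $L^1(d\mu)$ for a general $\alpha$-dimensional $\mu$. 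The correct passage (the argument at the start of the proof of Proposition~3.2 of \cite{BBCR}) decomposes dyadically in frequency, expands the truncation on each dyadic block as a rapidly convergent series of modulations (which do commute with the sup), and sums the pieces; this costs an $\varepsilon$ of derivatives, i.e.\ one only obtains
\begin{equation*}
\Big\|\sup_{0<t<1}\sup_{N>1}|S^{N,2}_{-t}u_0|\Big\|_{L^1(d\mu)}\lesssim\sqrt{c_\alpha(\mu)\|\mu\|}\,\|u_0\|_{H^{s+\varepsilon}(\Rn)},
\end{equation*}
which is precisely what the paper's proof records. Consequently your application of the maximal bound to $u_0-\phi_k$ with $u_0$ merely in $H^s$, expecting the right-hand side $\|u_0-\phi_k\|_{H^s}$, is not justified; you must run the density/Chebyshev argument at regularity $s+\varepsilon$ and then absorb the loss using the strict inequalities $\alpha>\alpha_0\ge n-2s$ (equivalently, the openness in $s$ of the hypothesis in the intended application, Theorem~\ref{OurBouThm}). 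With that modification the argument closes.
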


\begin{proof} First we use the argument at the beginning of the proof of Proposition 3.2 in \cite{BBCR} to conclude that \eqref{CompleteMaxIneq} implies the maximal estimate
$$\Big\| \sup_{0<t<1}\sup_{N > 1} | S^{\!N,2}_{-t} u_{0} | \Big\|_{L^{1}(d \mu)}
\lesssim \sqrt{c_{\alpha}(\mu)\|\mu\|}\, \| u_{0} \|_{H^{s+\varepsilon}(\Rn)}
$$
whenever $u_0\in H^{s+\varepsilon}$ for all $\varepsilon>0$. Then we use the density argument that invokes Frostman's lemma  in the Appendix B of \cite{BR} or \cite[Chapter 17]{M} to conclude.
\end{proof}

Thus it remains to prove {\it a priori} maximal estimates that hold uniformly with respect to compactly supported fractal measures. Indeed it remains to prove the following theorem. 
\begin{theorem}\label{OurBouThm}
Let  $n\ge 1$ and 
\begin{equation*} \label{sValue}
s > \left\{
\begin{array}{lcr}
\frac{n-\alpha}{2} + \frac{n}{2(n+1)}, & \mbox{if} & 
0 \leq \alpha \leq  n - 1+\frac{2}{n+1},  \\
 &  & \\
(n - \alpha +1) \left( \frac{1}{2} - \frac{1}{4n} \right)  & \mbox{if} &  
n - 1+\frac{2}{n+1} \leq \alpha \leq n.
\end{array} \right.
\end{equation*}
Then
\begin{equation*}\label{AMIThm}
\left\| \sup_{0< t < 1} | e^{it\Delta} f | \right\|_{L^{2}(d \mu)} \lesssim
\sqrt{c_{\alpha}(\mu)}
\| f \|_{H^{s}(\Rn)}
\end{equation*}
whenever $f$ is Schwartz and $\mu$ is $\alpha$-dimensional.
\end{theorem}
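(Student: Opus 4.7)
The plan is to transplant the framework of Section~\ref{fourierdecay} from the spherical extension setting to the Schr\"odinger propagator setting in $\R^{n+1}$. First I would perform a Littlewood--Paley decomposition $f = \sum_N f_N$ with $\hat f_N$ supported in $|\xi|\simeq N$, and observe that by the triangle inequality (and a standard $\varepsilon$-loss in $N$) it suffices to prove the frequency-localised bound $\|\sup_{0<t<1}|e^{it\Delta}f_N|\|_{L^2(d\mu)} \lesssim \sqrt{c_\alpha(\mu)}\,N^s\|f_N\|_2$. Next I would linearise the supremum by selecting a measurable $t:\R^n\to(0,1)$ so that $\sup_{0<t<1}|e^{it\Delta}f_N(x)|\lesssim|e^{it(x)\Delta}f_N(x)|$, and pushing $\mu$ forward under $x\mapsto(x,t(x))$ to obtain a measure $\tilde\mu$ on $\R^{n+1}$. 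Because the spatial projection of any ball $B_{\R^{n+1}}(y,r)$ is a ball $B_{\R^n}(\pi y,r)$, we have $c_\alpha(\tilde\mu)\le c_\alpha(\mu)$, so $\tilde\mu$ is $\alpha$-dimensional with $\alpha\le n=d-1$.

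Then I would perform the parabolic rescaling $(x,t)\mapsto(Nx,N^2t)$: writing $g(\xi)=N^{n/2}\hat f_N(N\xi)$ we have $\|g\|_2=\|f_N\|_2$ and $e^{it\Delta}f_N(x)=N^{n/2}T_S g(Nx,N^2t)$, where $T_S$ is the extension operator for the paraboloid $\phi(\xi)=-|\xi|^2$. Setting $R=N^2$ and pushing $\tilde\mu$ forward by this rescaling (with a factor of $R^{\alpha/2}$ so that the dimensional constant is preserved), a direct change of variables reduces the problem to proving
\begin{equation*}
\|T_S g\|_{L^2(d\nu_R)}\lesssim \sqrt{c_\alpha(\mu)}\,R^{(s-(n-\alpha)/2)/2}\|g\|_2,
\end{equation*}
where $\nu_R$ is $\alpha$-dimensional on $\R^{n+1}$, supported in $B_R$, and $g$ is supported in $\{|\xi|\le 1/2\}$.

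At this point I would insert the Bourgain--Guth decomposition~\eqref{FinalSelfIteratedFormula} with stopping scale $\lambda>0$, exactly as in Section~\ref{fourierdecay}. The three resulting contributions -- the multilinear product, the linear term at the smallest scale $\delta\simeq R^{-\lambda}$, and the remainder involving only $\Psi_\tau$ -- are then to be estimated by the direct analogues of Lemma~\ref{Fuman}, estimate~\eqref{MainLemmaFormulaPreq}, and Lemma~\ref{third0}. Every ingredient transfers: the $\Psi_\tau$ machinery of Section~\ref{decomposition} was developed uniformly for both the sphere and the paraboloid, the $d$-linear extension estimate of Proposition~\ref{MultilinEstimates2} applies verbatim with $\phi(\xi)=-|\xi|^2$, and the fractal-measure arguments (H\"older on dual tubes $\tau'$ of dimensions $\delta^{-1}\times\cdots\times\delta^{-1}\times\delta^{-2}$, the bound $\nu_R(T_j)\lesssim c_\alpha(\mu)\delta^{-(\alpha+1)}$ obtained by covering $T_j$ with $\delta^{-1}$ unit balls, and Plancherel in the $x$-variable for the linear piece) go through unchanged because $\alpha\le n=d-1$. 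The multilinear estimate yields a bound of the form $\sqrt{c_\alpha(\mu)}\,R^{\alpha/(2d)+\varepsilon}\,\delta^{(d-\alpha)/(2d(d-1))-(d-1)/(2d)}\|g\|_2$, while the linear term contributes $\sqrt{c_\alpha(\mu)}\,R^{1/2+\varepsilon}\,\delta^{(d-\alpha)/(2(d-1))}\|g\|_2$ at $\delta\simeq R^{-\lambda}$.

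The main obstacle is the optimisation over the stopping scale $\lambda$, where the two contributions pull in opposite directions: decreasing $\lambda$ is good for the multilinear term (since $\delta$ is kept large, exploiting more of the multilinear gain) but bad for the linear term (which wants $\delta$ as small as possible to benefit from the fractal cap count). Setting the exponents equal and solving produces precisely the two regimes in the statement: for $\alpha\ge n-1+\tfrac{2}{n+1}$ the constraints are balanced by a choice of $\lambda$ that reproduces the Bourgain--Lee threshold $(n-\alpha+1)(\tfrac12-\tfrac1{4n})$, while for smaller $\alpha$ a different optimal $\lambda$ is forced by the linear term, giving $\tfrac{n-\alpha}{2}+\tfrac{n}{2(n+1)}$. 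Once $\lambda$ is chosen in each regime, the exponent of $R$ is at most $(s-(n-\alpha)/2)/2$ up to an admissible $\varepsilon$-loss, and summation over the $N$-dyadic pieces using Lemma~\ref{BridgeLemma} completes the proof.
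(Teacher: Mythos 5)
Your outline has the right skeleton (dyadic frequency decomposition, parabolic rescaling, the Bourgain--Guth inequality \eqref{FinalSelfIteratedFormula}, and separate treatment of the multilinear, linear and remainder terms), but two steps that the proof depends on are missing, and as written the numbers do not close. First, you pass directly from $\sup_{0<t<1}$ to the rescaled problem with $R=N^2$, so your space--time region is $B_N\times(0,N^2)$ and every loss in the argument is measured in powers of $N^2$. The paper first invokes the temporal localisation lemma of Lee (in the fractal-measure form of \cite[Lemma 2.1]{KeithLee}) to reduce to $\sup_{0<t<1/N}$, which after rescaling gives the region $B_N\times(0,N)$, i.e.\ $R=N$. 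Without this step the thresholds you would obtain are far weaker: already at $\alpha=n$ your balanced exponent would require $s\ge 1-\tfrac{1}{n(n+1)}$ rather than $\tfrac12-\tfrac1{4n}$.

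Second, even granting the localisation, reducing to a single estimate $\|T_Sg\|_{L^2(d\nu_R)}\lesssim\cdots$ for a pushed-forward space--time measure and then quoting the exponents of Section~\ref{fourierdecay} verbatim (\eqref{numbertwo} and \eqref{polka}) discards structure that the proof needs. A graph pushforward of an $\alpha$-dimensional measure on $\R^n$ is more than merely $\alpha$-dimensional on $\R^{n+1}$: its mass on a dual tube $T_{jk}$ is controlled by the spatial projection $I_j$ alone, uniformly over the $\simeq\delta^{-1}$ time-slices $k$. The paper therefore keeps the mixed norm $\|\cdot\|_{L^2(d\mu_R)L^\infty(0,R)}$, handles the supremum by an $\ell^p_k$ sum over the time-slices of each spatial tube, and applies the multilinear estimate in the mixed norm $L^2_xL^p_t$ after H\"older over cubes of side $\delta^2R$. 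This produces the factor $R^{\frac{n}{2(n+1)}}$ in the multilinear term and an extra $\delta^{\frac1{2n}}$ in the linear term, neither of which appears in your quoted exponents $R^{\alpha/(2d)}$ and $\delta^{(d-\alpha)/(2(d-1))}$; balancing your exponents reproduces the decay rate of Theorem~\ref{us}, not the thresholds of Theorem~\ref{OurBouThm}. Relatedly, the optimisation you describe is not what happens: the paper fixes $\lambda=1/2$ (the natural parabolic scale), and the two regimes in $s$ arise because the multilinear contribution $R^{\frac{n}{2(n+1)}}\delta^{\frac{n-\alpha}{2n}+\frac1{2n}-\frac1{n+1}}$ is maximised at $\delta=1$ or at $\delta=R^{-1/2}$ according to the sign of the $\delta$-exponent, i.e.\ according to whether $\alpha\lessgtr n-1+\tfrac2{n+1}$. (A smaller point: Lemma~\ref{BridgeLemma} is used to pass from Theorem~\ref{OurBouThm} to Theorem~\ref{bil}; it plays no role in summing the dyadic pieces within the proof of Theorem~\ref{OurBouThm} itself.)
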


The result, although true with $n=1$, is already bettered by the work of \cite{BBCR}. 
This extends to fractal measures the following theorem due to Bourgain (with $n=1$ due to Carleson \cite{Carl} and with $n=2$ due to Lee \cite{Lee}).

\begin{theorem} \cite{Bou2}
Let $n\ge 1$ and  $s > \frac{1}{2} - \frac{1}{4n}$. Then 
\begin{equation*}\label{BouMaxIn}
\Big\| \sup_{0 < t < 1} | e^{it \Delta} f | \Big\|_{L^{2}(B_1)} 
\lesssim \| f \|_{H^{s}(\Rn)}.
\end{equation*} 
\end{theorem}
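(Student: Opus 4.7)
The statement is the Lebesgue-measure endpoint $\alpha = n$ of the fractal maximal estimate Theorem~\ref{OurBouThm}. Taking $d\mu = \chi_{B_1}\,dx$ gives $c_n(\mu) \le 1$ and $L^2(d\mu) = L^2(B_1)$; since $n-1+2/(n+1) \le n$ for every $n \ge 1$, the second branch of Theorem~\ref{OurBouThm} reads $s > (n - n + 1)(1/2 - 1/(4n)) = 1/2 - 1/(4n)$, exactly the hypothesis of the theorem. Thus the plan is to establish Theorem~\ref{OurBouThm} in the case $\alpha = n$, for which the machinery of Sections~\ref{mult}--\ref{decomposition} is tailor-made.

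I would begin with the usual set of reductions. Littlewood--Paley localisation reduces the task to proving $\|\sup_{0<t<1} |e^{it\Delta} f|\|_{L^2(B_1)} \lesssim R^{1/2-1/(4n)+\varepsilon}\|f\|_2$ for $\widehat f$ supported in the dyadic annulus $|\xi| \sim R$. Parabolic rescaling ($\xi = R\eta$, $x \mapsto Rx$, $t \mapsto R^2 t$) pushes the frequency support into $|\eta| \le 1/2$, inflates the spatial ball to $B_R \subset \Rn$ and the time interval to $(0, R^2)$. Kolmogorov's trick then linearises the time supremum into a measurable selection $t \colon B_R \to (0, R^2)$, so the goal becomes
$\|Tg(\cdot, t(\cdot))\|_{L^2(B_R)} \lesssim R^{1/2 - 1/(4n) + \varepsilon}\|g\|_2$
uniformly in $t(\cdot)$, where $T$ is the Schr\"odinger extension operator of Section~\ref{mult} with phase $\phi(\eta) = -|\eta|^2$.

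Next, apply the Bourgain--Guth decomposition \eqref{FinalSelfIteratedFormula} with a parameter $\lambda > 0$ to be optimised. Pointwise this writes $|Tg|$ as a sum, over intermediate scales $\delta \in [R^{-\lambda}, 1]$, of multilinear products $\Psi_\tau \prod_{k=1}^{n+1}|T_{\tau_k}g|^{1/(n+1)} * \zeta_{\tau_k'}$ with $(n+1)$-transversal subcaps $\tau_k \subset \tau \sim \delta$, plus a single linear layer $\sum_\tau \Psi_\tau |T_\tau g| * \zeta_{\tau'}$ at the critical scale $\delta \sim R^{-\lambda}$, plus a rapidly decaying remainder. The multilinear piece I would control by passing from $L^p$ to $L^2(B_R)$ at the natural BCT exponent $p = 2(n+1)/n$, absorbing the $\Psi_\tau$ weights using their $L^{2n/(n-1)}$-integrability on translates of the dual tubes from Proposition~\ref{ThePsi}, and invoking the globalised Bennett--Carbery--Tao estimate (Proposition~\ref{MultilinEstimates2}) after undoing the parabolic rescaling on each $\tau$. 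The linear piece I would handle with orthogonality of the caps in frequency combined with the essentially constant property on tubes dual to $\tau$, yielding a bound of the shape $R^{1/2 + \varepsilon}\delta^{\kappa}\|g\|_2$ for a suitable $\kappa > 0$. Equating the $R$-exponents of the two contributions with $\delta = R^{-\lambda}$ pins down the optimal $\lambda$ and produces the target $1/2 - 1/(4n)$.

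The main obstacle is in the multilinear step: the weight functions $\Psi_\tau$, which accumulate the cost of the lower-dimensional transversality losses through the recursive definition, threaten to degrade the exponent at every level of the iteration. This is precisely where the careful scale hierarchy $K_m \simeq R^{\varepsilon^{2(d+2-m)}}$ together with the $L^{2(d-1)/(d-2)}$-integrability from Proposition~\ref{ThePsi} become indispensable: they are engineered so that the $R^\varepsilon$ losses from BCT cancel against the $\Psi_\tau$ gains at each scale, keeping the compound loss subpolynomial. A secondary technical difficulty is verifying that the measurable selection $t(\cdot)$ arising from Kolmogorov's linearisation interacts cleanly with the Bourgain--Guth decomposition and does not obstruct the essentially constant property on the dual tubes.
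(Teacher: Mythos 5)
Your reduction is the right one in spirit: with $d\mu=\chi_{B_1}dx$ one has $c_n(\mu)\lesssim 1$, and the second branch of Theorem~\ref{OurBouThm} at $\alpha=n$ gives exactly $s>\frac12-\frac1{4n}$, so the quoted theorem is indeed the Lebesgue endpoint of the fractal maximal estimate (the paper itself simply cites \cite{Bou2} for it and proves the generalisation in Section~\ref{schrodingerconv}). Your sketch of the proof of that generalisation also follows the paper's Section~\ref{schrodingerconv} closely: Littlewood--Paley, parabolic rescaling, the decomposition \eqref{FinalSelfIteratedFormula}, Proposition~\ref{MultilinEstimates2} for the multilinear layer, Proposition~\ref{ThePsi} to absorb the $\Psi_\tau$ weights, and an optimisation of $\lambda$ (the paper takes $\lambda=1/2$ here).

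There is, however, a genuine gap in your chain of reductions: you rescale the frequency annulus $|\xi|\sim R$ directly, which leaves the time interval $(0,R^2)$, and then run the Bourgain--Guth/BCT scheme on $B_R\times(0,R^2)$. The scheme is calibrated to a space--time region of diameter $R$. In particular, your linear-layer bound of the shape $R^{1/2+\varepsilon}\delta^{\kappa}\|g\|_2$ (with $\kappa=\frac1{2n}$ at $\alpha=n$) comes from Plancherel in $x$ integrated over a time interval of length $R$, i.e. from $\big\||T_\tau g|\ast\zeta_{\tau'}\big\|_{L^2(B_R\times(0,R))}\lesssim R^{1/2}\|g_\tau\|_2$; over $(0,R^2)$ the same argument gives $(R^2)^{1/2}=R$, hence $R\,\delta^{1/(2n)}\|g_\tau\|_2$, and since the multilinear side pins $\lambda$ at $1/2$ (its bound grows in $\delta^{-1}$ at $\alpha=n$), the linear layer then overshoots the target $R^{1/2-1/(4n)+\varepsilon}$ by a factor $R^{1/2}$; splitting $(0,R^2)$ into $R$ intervals of length $R$ and taking a maximum loses the same factor. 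The missing ingredient is the temporal localisation lemma of Lee \cite[Lemma 2.3]{Lee}, used in the paper in the loss-free, fractal-measure form of \cite[Lemma 2.1]{KeithLee}: for data with frequency support in $\{|\xi|\sim R\}$ it reduces $\sup_{0<t<1}$ to $\sup_{0<t<1/R}$ \emph{before} rescaling, so that the rescaled time interval is $(0,R)$ and all the tube counting and the globalised multilinear estimate apply at the correct scale. This step is not routine (it encodes exactly the orthogonality in time you would otherwise need), so your argument does not close without it. By contrast, the issue you flag about Kolmogorov linearisation is a non-issue: the paper avoids linearisation entirely and works with the nested norms $L^2(d\mu_R)L^\infty(0,R)$, for which the essentially-constant properties on dual tubes are already formulated.
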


\begin{proof}[Proof of Theorem~\ref{OurBouThm}]

Set $s_{\mathrm{o}}= \max 
\{ \frac{n-\alpha}{2} + \frac{n}{2(n+1)}, (n - \alpha +1) ( \frac{1}{2} - \frac{1}{4n} ) \}$. After noting that
\begin{eqnarray}\nonumber
\left\| \sup_{0 < t < 1 } |e^{it\Delta} f| \right\|_{L^{2}(d \mu)}
& \lesssim &
\sqrt{c_{\alpha}(\mu)}\, 
\left\| \sup_{0 < t < 1 } |e^{it\Delta} f| \right\|_{L^{\infty}(B_1)}
\\ \nonumber
& \lesssim & \sqrt{c_{\alpha}(\mu)} |B_{2^{\mathfrak{c}(\varepsilon)}}|^{1/2}  
 \big\| \widehat{f}\, \big\|_{2}
\\ \nonumber
&\lesssim & 
\sqrt{c_{\alpha}(\mu)} \| f \|_{2}
\end{eqnarray}
provided $\mbox{supp}\widehat{f} \subset \{\xi\in\R^n\,:\, | \xi | \le 2^{\mathfrak{c}(\varepsilon)} \}$, by a  dyadic decomposition in frequency, 
the inequality (\ref{AMIThm}) 
would follow from 
\begin{equation*}\label{AlphaMaxIneq}
\left\| \sup_{0 < t <1} | e^{it\Delta} f | \right\|_{L^{2}(d \mu)} \lesssim
\sqrt{c_{\alpha}(\mu)} 
R^{s_{\mathrm{o}}+ \varepsilon} \| f\|_{2},
\end{equation*}
provided 
$\mbox{supp}\widehat{f} \subset \left\{ R/8 < | \xi | < R/2 \right\}$ for all   $R > 2^{\mathfrak{c}(\varepsilon)}$. 
For this we make use of temporal localisation lemma due to Lee \cite[Lemma 2.3]{Lee}. In fact we use a version that holds with respect to fractal measures and where the $\varepsilon$-loss in derivatives was avoided (see \cite[Lemma 2.1]{KeithLee}), so that it will suffice to prove  
 \begin{equation*}\label{RestrMaxIneq}
\bigg\| \sup_{0 < t < 1/R} | e^{it\Delta} f | \bigg\|_{L^{2}(d \mu)} 
\lesssim
\sqrt{c_{\alpha}(\mu)}R^{s_{\mathrm{o}}+ \varepsilon} \| f \|_{2}.
\end{equation*}
Writing $\widehat{f}_R=R^n\widehat{f}(R\, \cdot\,)$ and scaling,  we see that
\begin{eqnarray*}
\bigg\| \sup_{0 < t < 1/R} | e^{it\Delta} f | \bigg\|_{L^{2}(d \mu)}&= &R^{-\alpha/2}\bigg(
\int \sup_{0 < t < R} |e^{it\Delta} f_R|^2 (x) R^{\alpha} d \mu (x / R)
\bigg)^{1/2} \\ \nonumber
\end{eqnarray*}
so that, by writing $d \mu_{R}(x) := R^{\alpha} d \mu (x/R)$, this is equivalent to 
\begin{equation*}\label{SecondInTheEquiv}
\Big\| \sup_{0 < t < R} | e^{it\Delta} f | \Big\|_{L^{2}(d \mu_{R})} \lesssim \sqrt{c_{\alpha}(\mu)}R^{\frac{\alpha - n}{2} + s_{\mathrm{o}}+ \varepsilon}  \| f \|_{2},
 \end{equation*}
provided $\supp \widehat{f} \subset \{\xi\,:\, 1/8\le|\xi|\le 1/2 \}$. It is easy to check that $c_\alpha(\mu_R)=c_\alpha(\mu)$. 

Now by taking $\lambda=1/2$ in \eqref{FinalSelfIteratedFormula} we have the pointwise bound
\begin{eqnarray*}\nonumber
 | e^{it\Delta} f |    
& \lesssim &  
R^{\varepsilon}\!\!\!\!\sum_{ 
R^{-1/2} \le \delta \leq 1 }
\Bigg( 
\sum_{ \tau \sim \delta} 
\Big( \Psi_{\tau}\!\!\!\!
\max_{\tau_{1}, \dots ,  \tau_{n+1}  \subset \tau }\prod_{k=1}^{n+1}
|T_{\tau_{k}} \widehat{f}\,|^{\frac{1}{n+1}}
*\zeta_{\tau_k'} \Big)^{2}   
\Bigg)^{1/2}  
\\   \nonumber
& &+
\sum_{R^{-1/2}  \le \delta \leq R^{-1/2+\varepsilon} } 
\left( \sum_{\tau \sim \delta} 
\left(\Psi_{\tau} |T_{\tau} \widehat{f}\,|*\zeta_{\tau'}  \right)^{2}  
\right)^{1/2}
\\ \label{schroform}
&  &+
\sum_{R^{-1/2} \le \delta \leq 1}  
\left( \sum_{\tau \sim \delta}\Psi^2_{\tau}\right)^{1/2}R^{-1/\varepsilon}\|\widehat{f}\,\|_{2}.
\end{eqnarray*}
Recalling that there are a finite number, independent of $R$, of terms in each of the $\delta$-sums, by the triangle inequality, we need only prove estimates which are uniform in $\delta$. Writing $g_\tau:=\widehat{f} \chi_\tau$, if we could prove
\begin{equation}\label{first}
\Big\| \sup_{0 < t < R} \Psi_{\tau}|T_{\tau} g|*\zeta_{\tau'}  \Big\|_{L^{2}(d \mu_{R})} \lesssim \sqrt{c_{\alpha}(\mu)}R^{\frac{\alpha - n}{2} + s_{\mathrm{o}}+ n\varepsilon}  \| g_\tau \|_{2},
\end{equation} 
uniformly for $\tau$ at scale $\delta$ with $R^{-1/2}  \le \delta \leq R^{-1/2+\varepsilon}$, then  using orthogonality, we could bound the middle term on the right-hand side of \eqref{schroform}. Similarly, replacing the $\max_{\tau_{1}, \dots ,  \tau_{n+1}  \subset \tau }$ with a $\ell^2$-norm, and using the fact that there are no more than $R^\varepsilon$ choices in such a sum, in order to treat the first term it will suffice to prove
\begin{equation}\label{second}
\Big\| \sup_{0 < t < R} \Psi_{\tau}\prod_{k=1}^{n+1}
|T_{\tau_{k}} g|^{\frac{1}{n+1}}
*\zeta_{\tau_k'} \Big\|_{L^{2}(d \mu_{R})} \lesssim \sqrt{c_{\alpha}(\mu)}\,R^{\frac{\alpha - n}{2} + s_{\mathrm{o}}+ n\varepsilon}  \| g_\tau \|_{2},
\end{equation} 
uniformly for $\tau$ at scale $\delta$ with $R^{-1/2}  \le \delta \leq 1$ and uniformly for choices of transversal caps $\tau_1,\ldots,\tau_{n+1}\subset \tau$. Finally, in order to deal with the remainder term,  by taking $\varepsilon$ sufficiently small, it will suffice to prove that
\begin{equation}\label{thirds}
\|\sup_{0<t<R} \Psi_{\tau} \|_{L^{2}( d \mu_{R})} \lesssim \sqrt{c_{\alpha}(\mu)}R^{n+1},
\end{equation}
uniformly for $\tau$ at scale $\delta$ with $R^{-1/2}  \le \delta \leq 1$. Taking for granted the proofs of  (\ref{first}), (\ref{second}) and (\ref{thirds}), which we will present in the forthcoming lemmas, this completes the proof of Theorem~\ref{OurBouThm}. 
\end{proof}

From now on, for nested norms, we write
$
\| f \|_{XY} := \big\|\|f\|_{Y}\big\|_{X}.
$

\begin{lemma}\label{third} Let $0<\varepsilon<\frac{1}{8n}$. Then, for all  caps $\tau\sim\delta$ with $R^{-1/2} \le \delta \leq 1$,
\begin{equation*}
\| \Psi_{\tau} \|_{L^{2}( d \mu_{R}) L^{\infty}(0,R)} \lesssim \sqrt{c_{\alpha}(\mu)}R^{n+1}.
\end{equation*}
\end{lemma}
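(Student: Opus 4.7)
The plan is to mirror the proof of Lemma~\ref{third0} with one added twist: the essential constancy of $\Psi_\tau$ at scale one, established in Proposition~\ref{ThePsi}, will play the additional role of reducing the $L^\infty$ in time to an $L^q$-integral at essentially no cost. The target $R^{n+1}$ is quite loose, so there is substantial room for small losses in $R^\varepsilon$ and the whole argument is nearly lossless beyond those.

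First, I would handle the supremum in time. Setting $q = \frac{2(d-1)}{d-2} = \frac{2n}{n-1}$ (with $d = n+1$), essential constancy at scale one lets one discretise $t\in(0,R)$ at the integers and reconstitute an integral:
\begin{equation*}
\Bigl(\sup_{0<t<R}\Psi_\tau(x,t)\Bigr)^q \lesssim \sum_{j=0}^{\lceil R\rceil} \Psi_\tau(x,j)^q \lesssim \int_0^{R+1} \Psi_\tau(x,s)^q\,ds,
\end{equation*}
where in the second step essential constancy at scale one in $t$ gives $\Psi_\tau(x,j)^q \lesssim \int_{|s-j|<1}\Psi_\tau(x,s)^q\,ds$.

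Next, by H\"older's inequality and the fact that $\mu_R$ is supported in $B_R$ with $\mu_R(B_R)\lesssim c_\alpha(\mu) R^\alpha$,
\begin{equation*}
\|\sup_{0<t<R}\Psi_\tau\|_{L^2(d\mu_R)} \lesssim \bigl(c_\alpha(\mu) R^\alpha\bigr)^{\frac12-\frac1q}\|\sup_{0<t<R}\Psi_\tau\|_{L^q(d\mu_R)}.
\end{equation*}
Then, precisely as in Lemma~\ref{third0}, covering $B_R \subset \mathbb{R}^n$ by unit balls and applying essential constancy in the spatial variable together with $c_\alpha(\mu_R) = c_\alpha(\mu)$, one exchanges $d\mu_R$ for Lebesgue measure, gaining only a factor $c_\alpha(\mu)^{1/q}$:
\begin{equation*}
\|\sup_{0<t<R}\Psi_\tau\|_{L^q(d\mu_R)}^q \lesssim c_\alpha(\mu) \int_{B_R}\int_0^{R+1}\Psi_\tau(x,s)^q\,ds\,dx = c_\alpha(\mu)\,\|\Psi_\tau\|_{L^q(B_R\times(0,R+1))}^q.
\end{equation*}

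To finish, I would cover $B_R\times(0,R+1)\subset \mathbb{R}^{n+1}$ by a disjoint family $\{T_j\}$ of translates of $\tau'$, which are cuboids with $n$ short sides of length $\delta^{-1}$ and one long side of length $\delta^{-2}$; there are at most $\sim R^{n+1}\delta^{n+2}$ of them. Applying the averaged property \eqref{PsiPropertyFINAL} on each $T_j$ yields
\begin{equation*}
\|\Psi_\tau\|_{L^q(B_R\times(0,R+1))}^q \lesssim \sum_j |T_j| R^{(n+2)\varepsilon/2} \lesssim R^{n+1+(n+2)\varepsilon/2}.
\end{equation*}
Collecting, the exponent of $R$ multiplying $\sqrt{c_\alpha(\mu)}$ becomes $\alpha\bigl(\tfrac12-\tfrac1q\bigr) + \tfrac{n+1}{q} + O(\varepsilon) = \tfrac{\alpha+n^2-1}{2n} + O(\varepsilon)$, which is strictly less than $n+1$ for $\alpha\le n$ and all $n\ge 2$. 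There is no genuine obstacle in this argument—the only thing to be careful about is that essential constancy of $\Psi_\tau$ at scale one holds jointly in $(x,t)$, so that it can legitimately be invoked separately in each variable.
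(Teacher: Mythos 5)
Your argument is correct and follows essentially the same route as the paper: pass from $L^\infty$ in time to $L^q$ with $q=\tfrac{2n}{n-1}$ via essential constancy at scale one, exchange $d\mu_R$ for Lebesgue measure on $B_R$ using the $\alpha$-dimensional condition, and then apply the averaged bound \eqref{PsiPropertyFINAL} on a disjoint cover of $B_R\times(0,R)$ by translates of $\tau'$. The only (immaterial) difference is bookkeeping: you apply H\"older in $L^p(d\mu_R)$ first and exchange measures at the $L^q$ level as in Lemma~\ref{third0}, obtaining the exponent $\alpha(\tfrac12-\tfrac1q)$, whereas the paper exchanges at the $L^2$ level in space and gets $n(\tfrac12-\tfrac1q)=\tfrac12$; both land comfortably below $R^{n+1}$.
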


\begin{proof} Writing $q=\frac{2n}{n-1}$, we prepare to apply Proposition~\ref{ThePsi}.
 First of all, as $\Psi_{\tau}$ is essentially constant at scale one,
we can bound
\begin{eqnarray*}
 \| \Psi_{\tau} \|_{L^{2}( d \mu_{R}) L^{\infty}(0,R)}
&\lesssim &
\| \Psi_{\tau} \|_{L^{2}( d \mu_{R}) L^{q}(0,R)}
\\ 
&\lesssim &
\sqrt{c_{\alpha}(\mu)}\| \Psi_{\tau} \|_{L^{2}( B_{R}) L^{q}(0,R)}
\\ 
& \lesssim & 
\sqrt{c_{\alpha}(\mu)}R^{n(\frac{1}{2}-\frac{1}{q})}
\| \Psi_{\tau} \|_{L^{q}(B_{R}\times(0,R))}.
\end{eqnarray*}
Noting that $n(\frac{1}{2}-\frac{1}{q})=\frac{1}{2}$, and covering $B_{R} \times (0,R)$ with a family $\{ T_{j} \}$ of  
 translates of $\tau'$ with disjoint interiors, we can bound this as
\begin{eqnarray*}\label{ChainOfIneqTri}
\| \Psi_{\tau} \|_{L^{2}( d \mu_{R}) L^{q}(0,R)}& \lesssim &
\sqrt{c_{\alpha}(\mu)}R^{\frac{1}{2}}\Big(\sum_{j}
\| \Psi_{\tau} \|^q_{L^{q}(T_{j})}\Big)^{1/q}
\\ \nonumber
& \lesssim &
\sqrt{c_{\alpha}(\mu)}R^{\frac{1}{2}}\Big(\sum_{j}
|T_{j}| |\tau '|^{\varepsilon}\Big)^{1/q}
\\ \nonumber
& \lesssim &
\sqrt{c_{\alpha}(\mu)}R^{\frac{1}{2}}R^\frac{n+1}{q}\delta^{-\frac{(n+2)\varepsilon}{q}},
\end{eqnarray*}
where the second inequality is by Proposition~\ref{ThePsi}. For the range of $\delta$ under consideration, this is more than enough to give the desired bound.
\end{proof}

\begin{lemma}Let $0<\varepsilon<\frac{1}{8n}$. Then,
for all caps $\tau\sim\delta$ with $R^{-1/2}  \le \delta \leq R^{-1/2+\varepsilon}$,\begin{equation*}
\big\|\Psi_{\tau}|T_{\tau} g|*\zeta_{\tau'}\big\|_{L^{2}( d \mu_{R}) L^{\infty}(0,R)} \lesssim \sqrt{c_{\alpha}(\mu)}R^{\frac{1}{2}-\frac{1}{4n}-\frac{n-\alpha}{4n} + n\varepsilon}  \| g_\tau \|_{2}.
\end{equation*} 
\end{lemma}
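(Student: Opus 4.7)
The plan is to mirror the proof of the preceding lemma (for the same quantity without the $L^\infty(0,R)$ norm), incorporating the supremum in time by exploiting the fact that $\Psi_\tau$ is essentially constant at scale one, not merely at scale $\tau'$.

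First I would cover $B_R\times(0,R)\subset\R^{n+1}$ by a family $\{T_j\}$ of pairwise-disjoint-interior translates of the dual cuboid $\tau'$. For $\delta$ in the range $R^{-1/2}\le \delta\le R^{-1/2+\varepsilon}$, the long dimension $\delta^{-2}$ is $\ge R^{1-2\varepsilon}$, so each vertical slice $\{x\}\times(0,R)$ intersects at most $R^{O(\varepsilon)}$ of the $T_j$; this overcounting will be absorbed into the $R^{n\varepsilon}$ loss. Writing $G_\tau:=|T_\tau g|*\zeta_{\tau'}$, which is essentially constant on each $T_j$, the inequality $|\Psi_\tau G_\tau|(x,t)\lesssim G_\tau(x_j,t_j)\,\Psi_\tau(x,t)$ on $T_j$ reduces the problem to controlling $\sup_{t}\Psi_\tau(x,t)$ over the segment of $T_j$ sitting above $x$.

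The main step (and the only genuine novelty beyond the preceding lemma) is handling this time-supremum. Setting $q=\frac{2n}{n-1}$ as before, the essential constancy of $\Psi_\tau$ at scale one together with the elementary bound $\max_i a_i\le(\sum_i a_i^q)^{1/q}$ applied to a $1$-net in time yields
$$\sup_{t:(x,t)\in T_j}\Psi_\tau(x,t)^q \;\lesssim\; \int_{t:(x,t)\in T_j}\Psi_\tau(x,t)^q\,dt.$$
Once the supremum has been traded for this integral, the rest is a direct transcription of the preceding lemma: Hölder in $x$ moves from $L^2(d\mu_R)$ to $L^q(d\mu_R)$, the essential constancy of $\Psi_\tau$ at scale one together with the $\alpha$-dimensionality of $\mu_R$ trades $d\mu_R$ for $c_\alpha(\mu_R)^{1/q}\,dx$, and the averaged estimate \eqref{PsiPropertyFINAL} controls $\|\Psi_\tau\|_{L^q(T_j)}$ by $|T_j|^{1/q}R^{\varepsilon/q}$. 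Combining these with the spatial measure estimate $\mu_R(\pi_x T_j)\lesssim c_\alpha(\mu)\,\delta^{-\alpha-1}$ (cover the projection by $\delta^{-1}$ balls of radius $\delta^{-1}$) and the essential-constancy identity $G_\tau(x_j,t_j)\sim |T_j|^{-1/2}\|G_\tau\|_{L^2(T_j)}$, summation over $j$ yields
$$\big\|\Psi_\tau\,|T_\tau g|*\zeta_{\tau'}\big\|_{L^2(d\mu_R)L^\infty(0,R)} \;\lesssim\; \sqrt{c_\alpha(\mu)}\,\delta^{\frac{n-\alpha+1}{2n}}R^{\varepsilon}\|G_\tau\|_{L^2(B_R\times(0,R))}.$$

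Finally, I would bound $\|G_\tau\|_{L^2(B_R\times(0,R))}\lesssim R^{1/2}\|g_\tau\|_2$ via Minkowski's integral inequality followed by Plancherel in the spatial variable, exactly as in the preceding lemma's proof (the integral of $\zeta_{\tau'}$ is $O(1)$ and the $L^2_x$ norm of $T_\tau g$ is $\|g_\tau\|_2$ at each time). Substituting $\delta\sim R^{-1/2}$ produces the exponent $\tfrac{1}{2}-\tfrac{n-\alpha+1}{4n}=\tfrac{1}{2}-\tfrac{1}{4n}-\tfrac{n-\alpha}{4n}$ required on the right-hand side; the upper end $\delta\le R^{-1/2+\varepsilon}$ of the permitted range contributes only $R^{O(\varepsilon)}$, which is subsumed in the $R^{n\varepsilon}$ slack. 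The principal technical obstacle is precisely the commutation of the time-supremum with the $L^q$-averaged property of $\Psi_\tau$; the observation that essential constancy at scale one forces $\sup_t \Psi_\tau^q\lesssim\int \Psi_\tau^q\,dt$ on any interval turns this potential loss into no loss at all, and everything downstream parallels the proof just given.
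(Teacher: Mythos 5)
Your overall architecture (cover by translates of $\tau'$, trade the time--sup for an $L^q$ time--average using essential constancy of $\Psi_\tau$ at scale one, H\"older to pass from $d\mu_R$ to $c_\alpha(\mu_R)^{1/q}dx$, the averaged bound \eqref{PsiPropertyFINAL}, the measure bound $\mu_R(I_j)\lesssim c_\alpha(\mu)\delta^{-\alpha-1}$, and finally $\|G_\tau\|_{L^2(B_R\times(0,R))}\lesssim R^{1/2}\|g_\tau\|_2$) is the paper's, and your exponent $\delta^{\frac{n-\alpha+1}{2n}}=\delta^{\frac{n-\alpha}{2n}+\frac{1}{2n}}$ agrees with \eqref{ApplDefMeasu} at $p=2$. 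However, there is one genuinely false step: the claim that each vertical slice $\{x\}\times(0,R)$ meets at most $R^{O(\varepsilon)}$ of the cuboids $T_j$. The long axis of $\tau'$ points along the normal $Y_\tau$, which for frequencies in the annulus $\{1/8\le|\xi|\le 1/2\}$ makes an angle bounded \emph{away from zero} with the time axis (this is exactly why the paper records that the spatial projections $I_j$ have a long side of length $\simeq\delta^{-2}$). Consequently a vertical fiber crosses each tilted slab in an interval of length only $\simeq\delta^{-1}$, and over the time interval $(0,R)$ with $R\simeq\delta^{-2}$ it passes through $\simeq\delta^{-1}\simeq R^{1/2}$ distinct cuboids, not $R^{O(\varepsilon)}$. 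Any argument that handles the supremum over the full column by a union bound justified by this overlap count loses a factor $R^{1/4}$ in $L^2$ and does not close.

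The repair is precisely the paper's mixed-norm device: index the cuboids as $T_{jk}$, with $j$ labelling the spatial projection $I_j$ and $k$ the position along the column, and distribute the time--supremum via
\begin{equation*}
\sup_{0<t<R}|\Psi_\tau G_\tau|(x,t)\;\le\;\Big(\sum_{k}\|\Psi_\tau G_\tau\|_{L^\infty(T^x_{jk})}^{2}\Big)^{1/2},\qquad x\in I_j .
\end{equation*}
With this $\ell^2$ bound over the column the size of the overlap is irrelevant: after squaring, integrating in $d\mu_R$, and applying your per-cuboid estimates, the double sum $\sum_{j}\sum_{k}\|G_\tau\|^2_{L^2(T_{jk})}$ recombines into $\|G_\tau\|^2_{L^2(B_R\times(0,R))}$ by disjointness of the $T_{jk}$, with no loss. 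Your within-fiber step $\sup_{t:(x,t)\in T_{jk}}\Psi_\tau^q\lesssim\int_{t:(x,t)\in T_{jk}}\Psi_\tau^q\,dt$ is legitimate (the fiber has length $\simeq\delta^{-1}\gg1$), and once the column is organised this way the rest of your transcription, and the final substitution $\delta\simeq R^{-1/2}$, goes through as claimed.
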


\begin{proof}
We cover $B_{R} \times (0,R) $ by a family $\{ T_{jk} \}$ of translations of $\tau'$ with disjoint interiors. 
Denote by $I_{j}$ the projection orthogonal to time of $T_{jk}$ onto $\R^n$.
Recall that the sets $T_{jk}$ have dimensions 
$\delta^{-1} \times \dots \times \delta^{-1} \times \delta^{-2} $ and, as our functions are frequency supported in the the unit annulus, the sets $\tau'$ make an angle greater than $\pi/8$ with the time axis. Thus
the projections $I_{j}$ also have a long side of length a constant multiple of $\delta^{-2}.$

Set
$G_{\tau} :=  |T_{\tau} g|*\zeta_{\tau'}$. 
Denoting by $d\mu^j_{R}$ the measure $d\mu_{R}$ restricted to $I_j$,  by H\"older's inequality 
\begin{eqnarray*}\label{Ineq1}
 \| \Psi_{\tau} G_{\tau} \|_{L^{2}(d\mu_{R}) L^{\infty}(0,R)}&  = &
\bigg( \sum_{j} \| \Psi_{\tau} G_{\tau} \|^{2}
_{L^{2}(d\mu^j_{R})L^{\infty}(0,R)} \bigg)^{1/2} 
\\ \nonumber
& \leq & \mu_{R}( I_{j})^{\frac{1}{2} - \frac{1}{p}} 
\bigg( \sum_{j} \| \Psi_{\tau} G_{\tau} \|^{2}_{L^{p}(d\mu^j_{R})L^{\infty}(0,R)} \bigg)^{1/2}
\end{eqnarray*}
Denoting $T_{jk}^x=\{(y,t)\in T_{jk}\,:\, y=x\}$, on the other hand we have 
\begin{equation}\nonumber
\sup_{0<t<R } | \Psi_{\tau} G_{\tau}|(x,t) \leq 
\bigg( \sum_{k} \| \Psi_{\tau} G_{\tau} \|^{p}_{L^{\infty}(B^x_{jk})} \bigg)^{1/p}, 
\end{equation}
for all $x  \in I_{j}$, so that
\begin{align}\label{I}
\| \Psi_{\tau}G_{\tau} &\|_{L^{2}( d\mu_{R}) L^{\infty}(0,R)}\\\nonumber & \leq 
\mu_{R}( I_{j})^{\frac{1}{2} - \frac{1}{p}}
\bigg( \sum_{j} \bigg( \sum_{k} 
\| \Psi_{\tau} G_{\tau} \|^{p}_{L^{p}L^{\infty}(T_{jk}, d \mu_{R}dt)}
\bigg)^{2/p} \bigg)^{1/2}\!\!\!\!.
\end{align}

As in the previous lemma, we use that $\Psi_{\tau}$ is essentially constant at scale one, so that
\begin{eqnarray*}
\| \Psi_{\tau} \|_{L^{p}L^{\infty}(T_{jk}, d \mu_{R}dt)}
& \lesssim&
\|  \Psi_{\tau} \|_{L^{p}L^{q}(T_{jk}, d \mu_{R}dt)}\\
&  \leq &
\mu_{R}(I_{j})^{\frac{1}{p} - \frac{1}{q}} 
\| \Psi_{\tau} \|_{L^{q}(T_{jk}, d \mu_{R} d t)} \\ \nonumber
& \lesssim & 
 \mu_{R}(I_{j})^{\frac{1}{p} - \frac{1}{q}} 
c_{\alpha}(\mu)^{\frac{1}{q}}
\| \Psi_{\tau} \|_{L^{q}(T_{jk}, dx dt)} \\ \label{ApplPsiProp}
& \lesssim &
c_{\alpha}(\mu)^{\frac{1}{q}}  
 \mu_{R}(I_{j})^{\frac{1}{p} - \frac{1}{q}}  
 R^{\frac{n+2}{2q}\varepsilon}
 |T_{jk}|^{\frac{1}{q}},
\end{eqnarray*}
where the final inequality is by (\ref{PsiPropertyFINAL}). 
Using this and the fact that  $G_{\tau}$ is essentially constant on $T_{jk}$,
\begin{eqnarray}\nonumber \| \Psi_{\tau} G_{\tau} \|_{L^{p}L^{\infty}(T_{jk}, d \mu_{R}dt)}
 & \lesssim & 
 c_{\alpha}(\mu)^{\frac{1}{q}} 
\mu_{R}(I_{j})^{\frac{1}{p} - \frac{1}{q}}  R^{\frac{n+2}{2q}\varepsilon}|T_{jk}|^{\frac{1}{q}}
 \| G_{\tau} \|_{L^{\infty}L^{\infty}(T_{jk})}
\\ \nonumber
 & \lesssim & 
 c_{\alpha}(\mu)^{\frac{1}{q}} 
\mu_{R}(I_{j})^{\frac{1}{p} - \frac{1}{q}} R^{\frac{n+2}{4}\varepsilon} |T_{jk}|^{\frac{1}{q}}
 \delta^{\frac{n+1}{2}}\delta^{\frac{1}{p}} \| G_{\tau} \|_{L^{2}L^{p}(T_{jk})}.
\end{eqnarray} 
Plugging this into \eqref{I}, we obtain 
\begin{align*} 
&\quad \|  \Psi_{\tau}G_{\tau} \|_{L^{2}( d\mu_{R}) L^{\infty}(0,R)} \\  
&\!\! \!\!\lesssim 
 c_{\alpha}(\mu)^{\frac{1}{q}}
\mu_{R} (I_{j})^{\frac{1}{2}- \frac{1}{q}} R^{\frac{n+2}{4}\varepsilon}|T_{jk}|^{\frac{1}{q}} \delta^{\frac{n+1}{2}}\delta^{\frac{1}{p}}
\bigg( \sum_{j} \bigg( \sum_{k} 
\|  G_{\tau} \|^{p}_{L^{2}L^{p}(T_{jk})}
\bigg)^{2/p} \bigg)^{1/2}
\\ 
& \!\!\!\!\lesssim  
\sqrt{c_{\alpha}(\mu)}R^{\frac{n+2}{4}\varepsilon} 
\delta^{-(\alpha+1)(\frac{1}{2}-\frac{1}{q})}\delta^{(n+1)(\frac{1}{2}-\frac{1}{q}) + \frac{1}{p}-\frac{1}{q}} 
\bigg( \sum_{j} \bigg( \sum_{k} 
\|  G_{\tau} \|^{p}_{L^{2}L^{p}(T_{jk})}
\bigg)^{2/p} \bigg)^{1/2}\!\!\!\!,
\end{align*}
where in the second inequality we use $\mu_{R}(I_{j}) \lesssim c_\alpha(\mu) \delta^{- (\alpha +1)}$ which follows by covering the $I_{j}$ by $\delta^{-1}$ balls of radius $\delta^{-1}$. Finally, using  that $G_\tau$ is essentially constant on $T_{jk}$ and $\frac{1}{2}-\frac{1}{q}=\frac{1}{2n}$,  we can sum up to obtain
\begin{equation}\label{ApplDefMeasu}
 \|  \Psi_{\tau}G_{\tau} \|_{L^{2}( d\mu_{R}) L^{\infty}(0,R)}  \lesssim 
\sqrt{c_{\alpha}(\mu)}R^{\frac{n+2}{4}\varepsilon} 
\delta^{\frac{n-\alpha}{2n} + \frac{1}{p}-\frac{1}{q}} 
\|  G_{\tau} \|_{L^{2}(B_{R})L^{p}(0,R)}.
\end{equation}
In fact we have only performed this argument for general $p$ to facilitate the proof of the following lemma. Here we set $p=2$ and so it remains to bound
\begin{eqnarray*}
\| |T_{\tau} g|*\zeta_{\tau'}\|_{{L^{2}(B_{R})L^{2}(0,R)}}&\le & \int \|T_{\tau} g(\cdot-y)\|_{{L^{2}(0,R)L^{2}(\R^n)}}\zeta_{\tau'} (y) \ d y\\
&\le  &\int \|g_\tau\|_{{L^{2}(0,R)L^{2}(\R^n)}}\zeta_{\tau'} (y) \ d y\\
&\lesssim &\| g_\tau\|_{L^{2}(0,R)L^{2}(\R^n)}=R^{1/2}\|g_\tau\|_{L^{2}(\R^n)},
\end{eqnarray*}
by Fubini, Minkowski's integral inequality and Plancherel.
Plugging this into the previous estimate,  we see that
$$
\big\|\Psi_{\tau}|T_{\tau} g|*\zeta_{\tau'}\big\|_{L^{2}( d \mu_{R}) L^{\infty}(0,R)} \lesssim \sqrt{c_{\alpha}(\mu)}R^{\frac{1}{2}+\frac{n+2}{4}\varepsilon} 
\delta^{\frac{n-\alpha}{2n}+\frac{1}{2n}}  \| g_\tau \|_{2},
$$
which, with $R^{-1/2}\le \delta \le R^{-1/2+\varepsilon}$, yields the desired uniform estimate. \end{proof}

\begin{lemma}Let $0<\varepsilon<\frac{1}{8n}$. Then,
for all caps $\tau\sim\delta$ with $R^{-1/2}  \le \delta \leq 1$ and all $(n+1)$-transversal caps $\tau_1,\ldots\tau_{n+1}\sim \delta/K_{n+1}$ contained in~$\tau$, 
\begin{align*}
\Big\| \Psi_{\tau}\prod_{k=1}^{n+1}
|T_{\tau_{k}} g|^{\frac{1}{n+1}}
*&\zeta_{\tau_k'} \Big\|_{L^{2}( d \mu_{R}) L^{\infty}(0,R)}\\ &\lesssim \sqrt{c_{\alpha}(\mu)}R^{n\varepsilon}
\max \left( R^{\frac{1}{2}- \frac{1}{4n} - \frac{n - \alpha}{4n}},
R^{\frac{n}{2(n+1)}} \right)  \| g_\tau \|_{2}.
\end{align*} 
\end{lemma}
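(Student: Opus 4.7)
The plan is to mirror the proof of the preceding lemma, now choosing $p = 2(n+1)/n$ in the general-$p$ version of the estimate \eqref{ApplDefMeasu}—this being the integrability naturally paired with the $(n+1)$-linear Stein--Tomas estimate on $\R^{n+1}$. With this choice, the $\delta$-exponent in the generic formula simplifies to $\frac{(n+1)(n-\alpha)+1}{2n(n+1)}$, which is nonnegative for $\alpha \leq n$, so the factor $\delta^{\cdots}$ is at most $1$ uniformly over $\delta \in [R^{-1/2}, 1]$. The task then reduces to bounding $\|G_\tau\|_{L^2_x L^p_t(B_R \times (0,R))}$ with $G_\tau = \prod_{k=1}^{n+1}(|T_{\tau_k}g|^{1/(n+1)} * \zeta_{\tau_k'})$.

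For this I would expand the product of convolutions into a single integral in the translation variables via Fubini, apply Minkowski's integral inequality to pull the $L^2_x L^p_t$ norm inside, and use that spacetime-translation of $T_{\tau_k}g$ corresponds to a modulation of $g\chi_{\tau_k}$ (preserving $L^2$-mass). This reduces the problem to bounding $\|\prod_k|T_{\tau_k}g_{y_k,s_k}|^{1/(n+1)}\|_{L^2_x L^p_t}$ uniformly in the translations. Following Lemma~\ref{Fuman}, I would then use H\"older in $x$ to convert to the spacetime $L^p$-norm (costing a factor $R^{n/(2(n+1))}$), rescale the transversal caps $\tau_k$ from scale $\delta/K_{n+1}$ down to scale $1/K_{n+1}$ (gaining $\delta^{-n/(2(n+1))}$), and invoke Proposition~\ref{MultilinEstimates2}, producing a bound with $R$-exponent $n/(2(n+1)) + O(\varepsilon)$ and combined $\delta$-exponent $(1-\alpha)/(2n)$; at $\delta=1$ this is precisely the claimed $R^B$. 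To recover the first term $R^A$ in the maximum—relevant when $\alpha$ is large, since the multilinear bound deteriorates as $\delta$ shrinks—I would run the parallel argument with the alternative choice $p=2$ in the generic formula, recovering (as in the proof of the preceding lemma) a bound $R^{1/2}\delta^{(n+1-\alpha)/(2n)}\|g_\tau\|_2$, which equals $R^A$ at $\delta=R^{-1/2}$.

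The main obstacle will be reconciling the two estimates across the whole range $\delta \in [R^{-1/2}, 1]$: the multilinear bound is sharp for large $\delta$ but degrades at small $\delta$, while the $p=2$ bound is sharp at $\delta=R^{-1/2}$ but deteriorates as $\delta$ grows. A careful comparison of exponents at the crossover scale $\delta\sim R^{-1/(n+1)}$ is required to verify that, taken together, the two approaches give a uniform bound dominated by $R^{n\varepsilon}\max(R^A,R^B)$—alternatively, one can observe that a direct Plancherel/H\"older argument at fixed $t$ (using $\|T_{\tau_k}g(\cdot,t)\|_{L^2_x} = C_n\|g\chi_{\tau_k}\|_2$ and the product H\"older inequality on $n+1$ copies of $L^2_x$) gives $\|G_\tau\|_{L^2_x L^p_t} \lesssim R^{1/p}\|g_\tau\|_2$ independently of $\delta$, which already yields $R^B$ uniformly and hence suffices to dominate $\max(R^A,R^B)$. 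A secondary technicality is ensuring the rescaled caps retain transversality with constant $\gtrsim K_{n+1}^{-(n+1)}$, keeping the application of Proposition~\ref{MultilinEstimates2} within the $R^\varepsilon$-loss allowed by the choice of scales in the Bourgain--Guth framework.
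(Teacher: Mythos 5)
Your overall architecture is the paper's: apply \eqref{ApplDefMeasu} with $p=\tfrac{2(n+1)}{n}$, reduce via Minkowski and modulation to a multilinear bound on $\|\prod_k|T_{\tau_k}g|^{1/(n+1)}\|_{L^2_xL^p_t}$, rescale and invoke Proposition~\ref{MultilinEstimates2}. But the bookkeeping of the $\delta$-powers contains a genuine gap. The paper proves the \emph{single} estimate
$\|\Psi_\tau G_\tau\|_{L^2(d\mu_R)L^\infty(0,R)}\lesssim \sqrt{c_\alpha(\mu)}\,R^{\frac{n}{2(n+1)}+n\varepsilon}\,\delta^{\frac{n+1-\alpha}{2n}-\frac{1}{n+1}}\|g_\tau\|_2$,
and the point is that the nonnegative power $\delta^{\frac{n-\alpha}{2n}+\frac1p-\frac1q}$ coming from \eqref{ApplDefMeasu} must be \emph{kept} so as to partially cancel the loss $\delta^{-\frac{1}{2(n+1)}}$ (not $\delta^{-\frac{n}{2(n+1)}}$ --- you have transplanted the exponent from the spacetime $L^p$ estimate \eqref{rrr} of Lemma~\ref{Fuman}; for the mixed norm $L^2_xL^p_t$ the H\"older step in $x$ already accounts for the rest) from the rescaled multilinear estimate. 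The resulting exponent $\frac{n+1-\alpha}{2n}-\frac{1}{n+1}$ has a single sign on $[R^{-1/2},1]$, and its extremal value reproduces exactly $\max(R^A,R^B)$ with $A=\frac12-\frac{1}{4n}-\frac{n-\alpha}{4n}$, $B=\frac{n}{2(n+1)}$. If instead you discard the positive power (or use the combined exponent $\frac{1-\alpha}{2n}$ you state) and try to patch the small-$\delta$ regime with a separate $p=2$ branch giving $R^{1/2}\delta^{\frac{n+1-\alpha}{2n}}$, then taking the minimum of the two branches fails at the crossover scale: e.g.\ at $\alpha=n$ both branches equal $R^{\frac{n}{2n+1}}$ there, which exceeds $\max(R^A,R^B)=R^{\frac{2n-1}{4n}}$ by the fixed power $R^{\frac{1}{4n(2n+1)}}$. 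This cannot be absorbed into $R^{n\varepsilon}$ since $\varepsilon$ must ultimately be taken arbitrarily small, so the two-branch strategy does not close.

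Your fallback observation is also incorrect: a fixed-time Plancherel/H\"older argument gives $\sup_t\|\prod_k|T_{\tau_k}g|^{1/(n+1)}(\cdot,t)\|_{L^2_x}\lesssim\|g_\tau\|_2$ and hence controls $\|G_\tau\|_{L^p_tL^2_x}\lesssim R^{1/p}\|g_\tau\|_2$, but the norm required is $\|G_\tau\|_{L^2_xL^p_t}$ with the \emph{larger} exponent inside; since $p>2$, Minkowski's inequality gives $\|\cdot\|_{L^p_tL^2_x}\le\|\cdot\|_{L^2_xL^p_t}$, i.e.\ the inequality you need points the wrong way. Indeed, if $\|G_\tau\|_{L^2_xL^p_t}\lesssim R^{1/p}\|g_\tau\|_2$ held by Plancherel alone, transversality and Proposition~\ref{MultilinEstimates2} would play no role in this lemma, which is a clear warning sign. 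The repair is simply to carry out the single-estimate computation above; your secondary concern about transversality of the rescaled caps is handled exactly as you suggest, since they sit at scale $1/K_{n+1}$ with transversality constant $K_{n+1}^{-(n+1)}$ and $\mathfrak{c}(K_{n+1}^{-(n+1)})$ is controlled by the choice of scales.
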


\begin{proof} 
As before we set $G_{\tau} :=  \prod_{k=1}^{n+1}  
|T_{\tau_{k}}g|^{\frac{1}{n+1}}  * \zeta_{\tau_{k}'}$, and this time we will prove 
$$
\|  \Psi_{\tau}G_{\tau} \|_{L^{2}( d\mu_{R}) L^{\infty}(0,R)}   
\lesssim
\sqrt{c_{\alpha}(\mu)}R^{\frac{n}{2(n+1)}+n\varepsilon}
\delta^{ \frac{n - \alpha}{2n}+\frac{1}{2n}- \frac{1}{n+1}}
\| g_{\tau} \|_{2}, 
$$
which yields the desired estimate uniform in the range $R^{-1/2} \le \delta \leq 1$.
Covering $B_{R} \times (0,R) $ by  translations of $\tau'$, as $\tau'\subset \tau_k'$ we still have that
$G_{\tau}$ is essentially constant at this scale. Repeating the previous argument, this time with $p := \frac{2(n+1)}{n}$, by \eqref{ApplDefMeasu} we have 
\begin{equation*} 
\|  \Psi_{\tau}G_{\tau} \|_{L^{2}( d\mu_{R}) L^{\infty}(0,R)} \lesssim 
\sqrt{c_\alpha(\mu)}R^{\frac{n+2}{4}\varepsilon} 
\delta^{\frac{n-\alpha}{2n} +\frac{1}{p}- \frac{1}{q}} 
\|  G_{\tau} \|_{L^{2}(B_{R})L^{p}(0,R)},
\end{equation*}
and so it remains to bound
$
\| G_{\tau} \|_{{L^{2}(B_{R})L^{p}(0,R)}}.
$ 
By Minkowski's integral inequality, it will suffice to treat
$$
 \int\Big\|\prod_{k=1}^{n+1} |T_{\tau_{k}} g_{y_{k}}|^{\frac{1}{n+1}}
\Big\|_{L^{2}(B_{R})L^{p}(0,R)} \prod_{k=1}^{n+1}\zeta_{\tau_k'}(y_k)\, d y_{1}\ldots dy_{n+1},
$$
where
\begin{equation}\nonumber
g_{y_{k}} := g\,\chi_{\tau_k} e^{-i \pi(y_{k}) \cdot \xi +t_k| \xi |^{2}},\qquad t_k := y_{k} - \pi(y_{k}).
\end{equation}
Noting that $\frac{1}{p}-\frac{1}{q}=\frac{1}{2n}- \frac{1}{2(n+1)}$ and $\|g_{y_{k}}\|_2=\|g_{\tau_k}\|_2$, it remains to prove 
$$
\Big\|\prod_{k=1}^{n+1} |T_{\tau_{k}} g|^{\frac{1}{n+1}}
\Big\|_{L^{2}(B_{R})L^{p}(0,R)} \lesssim R^{\frac{n}{2(n+1)}+\varepsilon}\delta^{-\frac{1}{2(n+1)}}\|g\|_{2}.
$$
By scaling as in the proof of Lemma~\ref{Fuman} (see \eqref{polk} for the definition), this would follow from 
$$
\Big\|\prod_{k=1}^{n+1} |\widetilde{T}_{\widetilde{\tau}_k}f(x' -2\delta^{-1} \xi_{0}t', t')|^{\frac{1}{n+1}}
\Big\|_{L^{2}(B_{\delta R})L^{p}(0,\delta^2R)} \lesssim R^{\frac{n}{2(n+1)}+\varepsilon}\delta^{\frac{2}{p}-\frac{1}{2(n+1)}}\|f\|_{2}.
$$
By a rotation we can suppose that $\xi_0$ is parallel to $x_n$, so by an application of H\"older's inequality, and making the change of variables $x=x' - 2\delta^{-1}\xi_{0} t'$, it would suffice to prove
\begin{equation*}\label{fgd}
\Big\|\prod_{k=1}^{n+1} |\widetilde{T}_{\widetilde{\tau}_k}f|^{\frac{1}{n+1}}
\Big\|_{L^{2}(B_{\delta R})L_{x_n,t}^{p}(-2\delta R,2\delta R)\times(0,\delta^2R)} \lesssim R^{\frac{n-1}{2(n+1)}+\varepsilon}\delta^{\frac{3}{p}-\frac{1}{2}-\frac{1}{2(n+1)}}\|f\|_{2}.
\end{equation*}
Now partitioning $\R^{n-1}$ into cubes $\Omega$ of side length $\delta^2R$, and applying H\"older's inequality, the left-hand side is bounded by 
$$
(\delta^2R)^{2(n-1)(\frac{1}{2}-\frac{1}{p})}\left(\sum_{\Omega}\Big\|\prod_{k=1}^{n+1} |\widetilde{T}_{\widetilde{\tau}_k}f|^{\frac{1}{n+1}}
\Big\|^2_{L^{p}(\Omega)L_{x_n,t}^{p}(-2\delta R,2\delta R)\times(0,\delta^2R)}\right)^{1/2}.
$$
Noting that $$2(n-1)\Big(\frac{1}{2}-\frac{1}{p}\Big)=\frac{n-1}{n+1}=\frac{3}{p}-\frac{1}{2}-\frac{1}{2(n+1)},$$
the proof is completed by an application of Proposition~\ref{MultilinEstimates2}.
\end{proof}

\section*{Appendix}

The following lemma is well-known; see for example \cite[pp. 1024]{Tem}.

\begin{lemma}\label{TechnicLemma2}
Let $\widehat{\psi}=\widehat{\psi}_{\mathrm{o}}\ast\widehat{\psi}_{\mathrm{o}}$ be   
a smooth radially symmetric cut-off function supported in
$B(0,d) \subset \mathbb{R}^{d}$ and equal to one on $B(0,\sqrt{d})$ and consider the scaled version $\phi_{\tau'}$ adapted to $\tau'$.
Then, for all $m\ge 1$,  
\begin{equation}\label{TechnicLemma2Formula}
| F(x,t) | \lesssim
\left( |F|^{\frac{1}{m}} * | \psi_{\tau'}|^{\frac{1}{m}} (x,t) \right)^{m},
\end{equation}
provided $\supp \widehat{F} \subset \tau\subset \R^{d}$. 
\end{lemma}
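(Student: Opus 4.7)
The plan is to combine the reproducing formula for bandlimited functions with the essentially constant behaviour of $F$ on translates of $\tau'$, taking care to track how these interact when $F$ is replaced by its $m$-th root.

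First I would record the reproducing identity. Since $\widehat{\psi} = \widehat{\psi_{\mathrm{o}}} * \widehat{\psi_{\mathrm{o}}}$ is radial, supported in $B(0,d)$, and equals one on $B(0,\sqrt d)$, the scaling $\Lambda_\tau$ and the dilation in the definition of $\psi_{\tau'}$ show that $\widehat{\psi_{\tau'}}$ equals one on a set containing $\tau$. Hence $F = F * \psi_{\tau'}$ whenever $\supp\widehat F\subset \tau$, and in particular $|F| \le |F|*|\psi_{\tau'}|$.

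Second, I would establish the quantitative essentially constant property for $|F|^{1/m}$: there exists a constant $c>0$, depending only on dimension and the Schwartz norms of $\psi$, such that for every $(x_0,t_0)$ and every $(x,t)$ in a fixed dilate of $(x_0,t_0)+\tau'$,
$$
|F(x,t)|^{1/m} \,\ge\, c\,|F(x_0,t_0)|^{1/m},
$$
modulo Schwartz tails. This follows by inserting the reproducing formula at both $(x,t)$ and $(x_0,t_0)$ and estimating the difference of the two convolutions, using that $\psi_{\tau'}$ is essentially constant along $\tau'$ and has rapid decay outside; taking the $m$-th root preserves the inequality (with a worsened but still dimensional constant).

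Third, I would combine these two ingredients. On the dilate of $(x_0,t_0)+\tau'$ where essentially constancy holds, $|\psi_{\tau'}|^{1/m}$ is bounded below by a constant multiple of $|\tau'|^{-1/m}$ (from the explicit scaling $\psi_{\tau'}(x,t)=\delta^{d+1}\psi(\delta x',\delta^2 t')$). Combined with $|\tau'|$-volume of the neighbourhood, this gives
$$
\bigl(|F|^{1/m} * |\psi_{\tau'}|^{1/m}\bigr)(x_0,t_0) \,\gtrsim\, |F(x_0,t_0)|^{1/m}\cdot |\tau'|^{1-1/m}.
$$
Raising to the $m$-th power and using $|\tau'|\ge 1$ (a harmless assumption since $\delta\le 1$) yields
$$
\bigl(|F|^{1/m} * |\psi_{\tau'}|^{1/m}\bigr)^{m}(x_0,t_0) \,\gtrsim\, |F(x_0,t_0)|\cdot |\tau'|^{m-1} \,\ge\, |F(x_0,t_0)|,
$$
which is the desired pointwise inequality.

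The main obstacle is the second step: the essentially constant property for $|F|$ follows readily from the triangle inequality applied to the reproducing formula, but extending it quantitatively to $|F|^{1/m}$ requires careful handling of the Schwartz tails of $\psi_{\tau'}$, since passage from $|F|$ to $|F|^{1/m}$ does not commute with convolution. The cleanest route is to first prove a direct $L^\infty$ comparison $|F(x,t)|\le C\,|F(x',t')| + \text{tail}$ for $(x,t),(x',t')$ in the same translate of $\tau'$, then extract the $m$-th root, absorbing the tail into the implicit constant via the rapid decay of $\psi$.
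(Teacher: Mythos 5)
Your first step (the reproducing formula, after the necessary modulation by $e^{ix\cdot\xi_{\mathrm{o}}+it\phi(\xi_{\mathrm{o}})}$ so that the Fourier support of the kernel actually covers the cap $\tau$, which does not sit at the frequency origin) is fine, but the argument breaks down at your second step. The ``essentially constant'' property you invoke there is a pointwise \emph{lower} bound, $|F(x,t)|\ge c\,|F(x_0,t_0)|$ for all $(x,t)$ in a translate of $\tau'$, and this is false for band-limited functions: take $F=e^{i\langle\cdot,\zeta_1\rangle}-e^{i\langle\cdot,\zeta_2\rangle}$ with $\zeta_1,\zeta_2\in\tau$ at distance $\simeq\delta$; then $|F|$ vanishes on a family of parallel hyperplanes with spacing $\simeq\delta^{-1}$, so every translate of $\tau'$ contains both points where $|F|$ is maximal and points where $F=0$. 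The failure is in the bulk, not in the Schwartz tails, so it cannot be absorbed as you suggest; likewise the comparison $|F(x,t)|\le C|F(x',t')|+\text{tail}$ claimed in your final paragraph fails at any zero $(x',t')$ of $F$. What the triangle inequality applied to the reproducing formula gives is only the upper bound $|F|\le|F|*|\psi_{\tau'}|$ by a slowly varying \emph{majorant}; it is the majorant, not $|F|$ itself, that is essentially constant, and the whole point of the lemma is to manufacture such a majorant compatible with the exponent $1/m$. Relatedly, the intermediate inequality in your step 3 is stronger than the lemma by the factor $|\tau'|^{m-1}$ and fails for focusing examples (e.g.\ $F$ concentrated on a ball of radius $\delta^{-1}$ inside the tube $\tau'$), which is another sign that step 2 cannot be repaired.

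The paper's proof avoids any lower bound on $|F|$. Writing $\eta=\psi_{\tau'}e^{ix\cdot\xi_{\mathrm{o}}+it\phi(\xi_{\mathrm{o}})}$ and starting from $F=F*\eta$, one splits the integrand as $|F((x,t)-y)\eta(y)|=|F\eta|^{1/m}\,|F\eta|^{1/m'}$, bounds the second factor by $\|F((x,t)-\cdot)\eta\|_{L^{\infty}}^{1/m'}$, converts this into $\big(|\tau'|^{-1}\|F((x,t)-\cdot)\eta\|_{L^{1}}\big)^{1/m'}$ by Bernstein's inequality (the Fourier support of $F((x,t)-\cdot)\eta$ has measure $\simeq|\tau'|^{-1}$), and then divides both sides by $\big(\int|F((x,t)-y)\eta(y)|\,dy\big)^{1/m'}$. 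This gives $\big(\int|F((x,t)-y)\eta(y)|\,dy\big)^{1/m}\lesssim |F|^{1/m}*|\psi_{\tau'}|^{1/m}(x,t)$, and the left-hand side dominates $|F(x,t)|^{1/m}$. You should rework your argument along these lines; no pointwise comparison between values of $F$ at different points is ever needed.
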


\begin{proof}
As usual we set $m' := m/(m-1)$.
Letting
\begin{equation}
\eta(x,t) :=
\psi_{\tau'}(x,t) e^{i x \cdot \xi_{\mathrm{o}} +i t\phi(\xi_{\mathrm{o}})},
\end{equation}
where $(\xi_{\mathrm{o}},\phi(\xi_{\mathrm{o}}))$ is the centre of $\tau$, we note that
\begin{equation}\label{BarEtaVsEta}
|\eta|^{\frac{1}{m}}
=
|\tau'|^{\frac{1}{m'}}|\psi_{\tau'}|^{\frac{1}{m}}.
\end{equation}
By the self reproducing formula $ F = F * \eta $ 

\begin{eqnarray} \label{ByDvision1} 
 |F(x,t)|   
& \leq & 
 \int |F ((x,t)- y) \eta(y)| \ dy 
 \\ \nonumber
& = & \int |F((x, t)- y)\eta(y)|^{\frac{1}{m}}
 | F((x,t)- y) \eta(y)|^{\frac{1}{m'}} \ dy, 
\\ 
\nonumber
& \leq & 
 \| F ((x,t)- \cdot)  \eta\|^{\frac{1}{m'}}_{L^{\infty}} 
\int | F((x,t) - y)  \eta(y) |^{\frac{1}{m}} \ dy 
 \\\nonumber
& \lesssim & 
|\tau'|^{-\frac{1}{m'}}
\| F ((x,t)- \cdot)  
\eta \|^{\frac{1}{m'}}_{L^{1}}  
 \int | F((x,t)- y ) \eta(y)|^{\frac{1}{m}} \ dy ,\nonumber
\end{eqnarray}
where in the last inequality we have used Bernstein's 
inequality.
Hence by dividing  
by $\| F ((x,t)- \cdot) \eta \|^{\frac{1}{m'}}_{L^{1}}$, we see that 
\begin{eqnarray}\nonumber
\left( \int |F ((x,t)- y) \eta(y)| 
\ dy \right)^{\frac{1}{m}} 
 & \lesssim  &
|\tau'|^{-\frac{1}{m'}}
\left( \int |F ((x,t)- y) 
\eta(y)|^{\frac{1}{m}} \ dy \right) 
\\ \label{TechnLemma2FinalFormula}
 & =  &
\left( \int |F ((x,t)- y)|^{\frac{1}{m}} 
|\psi_{\tau'}(y)|^{\frac{1}{m}}\ dy \right) ,
\end{eqnarray}
where in the final identity we have used (\ref{BarEtaVsEta}).
Then (\ref{TechnicLemma2Formula})
follows using (\ref{ByDvision1}).
\end{proof}

\begin{lemma}\label{TechnicLemma1}
Let $0 < \delta \le 1$ and let $K > (K')^2 >1$. Let $\Lambda_1,\Lambda_2 \in SO(d)$ be such that $\Lambda_1\Lambda_2^{-1}$ is a rotation by an angle less than $\delta$.
Then if $F: \mathbb{R}^{d} 
\rightarrow \mathbb{R}_{+}$ is essentially constant on translates of $\Lambda_1^{-1}(T)$ where
$$T := \Big[-\frac{K}{\delta}, \frac{K}{\delta} \Big] \times \dots \times \Big[-\frac{K}{\delta}, \frac{K}{\delta} \Big]
\times\Big[-\frac{K}{\delta^2}, \frac{K}{\delta^2} \Big],$$
and
\begin{equation*}
\zeta (x,t) \lesssim 
\frac{\delta^{d+1}}{ (K')^{d}}
\left(1 +  
\Big| \frac{\delta x}{K'}\Big|^{2}
+
\Big| \frac{\delta^{2} t}{K'}\Big|^{2}
\right)^{-N},
\end{equation*}
or 
\begin{equation*}
\zeta (x,t) \lesssim 
\frac{\delta^{d+1}}{ (K')^{d+1}}
\left(1 +  
\Big| \frac{\delta x}{K'}\Big|^{2}
+
\Big| \frac{\delta^{2} t}{(K')^2}\Big|^{2}
\right)^{-N},
\end{equation*}
for some $N \ge d$, then
\begin{equation*}\label{TechnicLemma1Formula}
F*\zeta(\Lambda_2(\cdot)) (x_1,t_1) \lesssim_{L}
F(x_2,t_2) + \| F\|_{L^{\infty}} 
\left( \frac{K'}{K}\right)^{N}
\end{equation*}
whenever $(x_1,t_1) -(x_2,t_2) \in\Lambda_1^{-1}(T)$.
\end{lemma}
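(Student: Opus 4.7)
The plan is to change variables $z=\Lambda_2 y$ so as to put the convolution kernel into its natural coordinate frame, giving
\[
(F*\zeta(\Lambda_2\cdot))(x_1,t_1) = \int_{\R^d} F\bigl((x_1,t_1)-\Lambda_2^{-1}z\bigr)\,\zeta(z)\,dz,
\]
and then split the integral according to whether $z$ lies in a fixed dilate $cT$ of $T$ or in its complement, for some absolute constant $c>1$ to be chosen. A short change of variables in either of the two given forms of $\zeta$ shows that $\int_{\R^d}\zeta\lesssim 1$, so all the work is in controlling the geometric dichotomy between the near and far pieces.

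For the near piece $z\in cT$, the key observation is that $T$ has short sides of length $2K/\delta$ and long side of length $2K/\delta^{2}$, whose aspect ratio $1/\delta$ is exactly the reciprocal of the rotation angle between $\Lambda_1$ and $\Lambda_2$. An elementary trigonometric calculation then gives $\Lambda_1\Lambda_2^{-1}(cT)\subset c''T$ with $c''$ depending only on $c$: tilting the long direction by an angle $<\delta$ sweeps it by at most $\delta\cdot (K/\delta^{2})=K/\delta$ in the transverse directions, which is already the size of the short sides of $T$. Combined with $(x_1,t_1)-(x_2,t_2)\in\Lambda_1^{-1}(T)$, this shows that for every $z\in cT$ the points $(x_1,t_1)-\Lambda_2^{-1}z$ and $(x_2,t_2)$ lie in a common translate of $\Lambda_1^{-1}((c''+1)T)$. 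Since $F$ is essentially constant on translates of $\Lambda_1^{-1}(T)$, it is also essentially constant on translates of this fixed dilate, whence $F((x_1,t_1)-\Lambda_2^{-1}z)\lesssim F(x_2,t_2)$ uniformly in the near region; integrating against $\zeta$ yields the $F(x_2,t_2)$ contribution.

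For the tail $z\notin cT$ I would simply dominate $F$ by $\|F\|_{L^\infty}$ and estimate $\int_{z\notin cT}\zeta(z)\,dz$. Passing to the natural (anisotropic) coordinates of $\zeta$, the complement of $cT$ becomes the region where some coordinate exceeds $cK/K'$ (in the first form of $\zeta$) or where the long coordinate exceeds $cK/(K')^{2}$ (in the second form); the hypothesis $K>(K')^{2}>1$ guarantees that these thresholds are larger than~$1$, so we are genuinely in the decaying tail. Integrating $(1+|\,\cdot\,|^{2})^{-N}$ over such a region in $\R^d$ and taking $N\ge d$ produces a bound of order $(K'/K)^{N}$, which delivers the remainder term.

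The main obstacle is the geometric step in the near piece: one has to verify that the angular discrepancy between the level sets of $F$ and the support of $\zeta\circ\Lambda_2$ is compatible with the anisotropy of $T$. The exact balance between the rotation angle~$\delta$ and the aspect ratio~$1/\delta$ of~$T$ is precisely what lets a small rotation be absorbed into a constant dilation of $T$; any weakening of this matching would either force the near region to escape a fixed dilate of $T$ or would spoil the polynomial decay of the tail.
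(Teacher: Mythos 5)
Your argument is essentially identical to the paper's proof: the same change of variables $y\mapsto\Lambda_2^{-1}y$, the same splitting of the convolution into a piece over (a dilate of) $T$ and its complement, the essentially-constant property plus elementary trigonometry for the near piece, and the $L^{\infty}$ bound times the tail integral of $\zeta$ for the far piece. You even spell out the trigonometric step --- the matching of the rotation angle $\delta$ with the aspect ratio $1/\delta$ of $T$ --- which the paper dispatches with the phrase ``by trigonometry''.
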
 

\begin{proof}
If $\zeta$ takes the second form, then by a change of variables, 
\begin{align*}
\int_{\mathbb{R}^{d} / T} \zeta(x,t) \ dxdt
& \lesssim 
\int_{\mathbb{R}^{d} / [-K/K',K/K']^{d-1}\times[-K/(K')^2,K/(K')^2]}
|(x,t)|^{-2N} \ dxdt \\
& \lesssim  \int_{K/K'}^{\infty} \rho^{-2N + d-1} \ d \rho
\lesssim_N \left( \frac{K'}{K}\right)^{2N-d},
\end{align*}
and the same is true if $\zeta$ takes the first form.
Then note that
\begin{eqnarray}\nonumber
& &
\int F((x_1,t_1) - y)\zeta(\Lambda_2(y)) \ dy 
 = 
\int F((x_1,t_1) - \Lambda_2^{-1} y)\zeta(y) \ dy 
\\ \nonumber
& = &
\int_{T} F((x_1,t_1)-\Lambda_2^{-1} y)\zeta(y) \ dy 
+
\int_{\mathbb{R}^{d} / T} F((x_1,t_1)-\Lambda_2^{-1} y)\zeta(y) \ dy 
=: I + II.
\end{eqnarray}
By trigonometry and the essentially constant assumption, we have
\begin{equation}\nonumber
F((x_1,t_1) - \Lambda_2^{-1} y)\Big|_{y \in \Lambda^{-1}_1T} \lesssim F(x_2,t_2),
\end{equation}
whenever $(x_1,t_1)-(x_2,t_2)\in  \Lambda^{-1}_1T$ so that 
$I \lesssim F(x_2,t_2)$. On the other hand, we have that
\begin{equation*}\label{DetailTechnic2}
II \leq \| F \|_{L^{\infty}} \int_{\mathbb{R}^{d}/T} \zeta(y) \ dy
\lesssim \| F \|_{L^{\infty}}
 \left( \frac{K'}{K}\right)^{2N-d}.
\end{equation*}
from before, and so the desired estimate follows by adding the two bounds.
\end{proof}

\end{document}